\documentclass[11pt]{amsart}
\usepackage{amsmath,amssymb,amsthm,amsfonts}
\usepackage[alphabetic]{amsrefs}
\usepackage{mathrsfs}
\usepackage[arrow,matrix,curve,cmtip,ps]{xy}
\usepackage{graphicx,tikz}
\usepackage[hypertexnames=true, citecolor = green, colorlinks = false, linkcolor = red, linktoc = none, pdffitwindow = false, urlbordercolor = white]{hyperref} 
\usepackage[shortlabels]{enumitem}
\usepackage[final]{pdfpages}
\usepackage{thmtools}
\usepackage{cleveref}
\usepackage{multirow}

\allowdisplaybreaks



 \newcommand{\IF}[0]{\mathbb{F}}

\newcommand{\IK}[0]{\mathbb{K}}

\newcommand{\IQ}[0]{\mathbb{Q}} 
 \newcommand{\IT}[0]{\mathbb{T}}

\newcommand{\C}{\ensuremath{\mathbb{C}}}   

 
 \newcommand{\CD}[0]{\mathcal{D}}
 \newcommand{\CF}[0]{\mathcal{F}}
\newcommand{\CG}[0]{\mathcal{G}} \newcommand{\CH}[0]{\mathcal{H}}
 
\newcommand{\CK}[0]{\mathcal{K}} \newcommand{\CL}[0]{\mathcal{L}}
 \newcommand{\CN}[0]{\mathcal{N}}
\newcommand{\CO}[0]{\mathcal{O}} 
 \newcommand{\CR}[0]{\mathcal{R}}
 \newcommand{\CT}[0]{\mathcal{T}}
 
 \newcommand{\CX}[0]{\mathcal{X}}




\newcommand{\Span}[0]{\operatorname{span}}
\newcommand{\id}[0]{\operatorname{id}}		



\newcommand{\Aut}[0]{\operatorname{Aut}}

\newcommand*\onto{\ensuremath{\joinrel\relbar\joinrel\twoheadrightarrow}} 
\newcommand*\into{\ensuremath{\lhook\joinrel\relbar\joinrel\rightarrow}}  

\newtheorem{theorem}{Theorem}[section]
\newtheorem{lemma}[theorem]{Lemma}
\newtheorem{proposition}[theorem]{Proposition}
\newtheorem{corollary}[theorem]{Corollary}
\theoremstyle{remark}
\newtheorem{remark}[theorem]{\bfseries Remark}

\newtheorem{definition}[theorem]{\bfseries Definition}
\newtheorem{example}[theorem]{\bfseries Example}
\newtheorem{examples}[theorem]{\bfseries Examples}

\newtheorem*{theorem*}{Theorem}
\newtheorem*{lemma*}{Lemma}
\newtheorem*{remark*}{Remark}

\numberwithin{equation}{section}


\newcommand{\Z}{\mathbb{Z}}
\newcommand{\N}{\mathbb{N}}
\newcommand{\R}{\mathbb{R}}
\newcommand{\T}{\mathbb{T}}
\newcommand{\im}{\operatorname{im}}
\newcommand{\coker}{\operatorname{coker}}

\makeatletter
\ifcsname phantomsection\endcsname
    \newcommand*{\qrr@gobblenexttocentry}[5]{}
\else
    \newcommand*{\qrr@gobblenexttocentry}[4]{}
\fi
\newcommand*{\addsubsection}{%
    \addtocontents{toc}{\protect\qrr@gobblenexttocentry}%
    \subsection}
\makeatother

\begin{document}
\title{On C*-algebras of irreversible algebraic dynamical systems}

\author{Nicolai Stammeier}
\address{Mathematisches Institut, Westf\"{a}lischen Wilhelms-Universit\"{a}t M\"{u}nster \\ \newline\hspace*{4mm}Einsteinstrasse 62 \\ 48149 M\"{u}nster \\ Germany}
\email{n.stammeier@wwu.de}

\thanks{Research supported by DFG through SFB $878$ and by ERC through AdG $267079$.}

\date{\today}

\subjclass[2010]{46L55, 37A55}

\keywords{dynamical systems, group endomorphisms, discrete groups, amenable actions, Kirchberg algebras, generalised Bunce-Deddens algebras}

\begin{abstract}
Extending the work of Cuntz and Vershik, we develop a general notion of independence for commuting group endomorphisms. Based on this concept, we initiate the study of irreversible algebraic dynamical systems, which can be thought of as irreversible analogues of the dynamical systems considered by Schmidt. To each irreversible algebraic dynamical system, we associate a universal C*-algebra and show that it is a UCT Kirchberg algebra under natural assumptions. Moreover, we discuss the structure of the core subalgebra, which turns out to be closely related to generalised Bunce-Deddens algebras in the sense of Orfanos. We also construct discrete product systems of Hilbert bimodules for irreversible algebraic dynamical systems which allow us to view the associated C*-algebras as Cuntz-Nica-Pimsner algebras. Besides, we prove a decomposition theorem for semigroup crossed products of unital C*-algebras by semidirect products of discrete, left cancellative monoids.
\end{abstract}

\maketitle
\section*{Introduction}\label{intro}
\noindent Let $G$ be a countable discrete group and $(\xi_g)_{g \in G}$ denote the standard orthonormal basis of the Hilbert space $\ell^2(G)$. Suppose $\varphi$ is an injective group endomorphism of $G$. Then $S_\varphi \xi_g = \xi_{\varphi(g)}$ defines an isometry on $\ell^2(G)$. For $g \in G$, let $U_g$ denote the canonical unitary on $\ell^2(G)$ given by left translation. Then $S_\varphi U_g = U_{\varphi(g)}S_\varphi$ holds for all $g \in G$. This leads to the C*-algebra $\CO_r[\varphi]$ generated by the isometry $S_\varphi$ and the unitaries $(U_g)_{g \in G}$. A natural object to study within this context is a universal model for $\CO_r[\varphi]$, which is a C*-algebra $\CO[\varphi] = C^*(\{s_\varphi,(u_g)_{g \in G} \mid \CR\})$ generated by an isometry $s_\varphi$ and unitaries $u_g$ satisfying a suitable set of relations $\CR$.

Suppose $\varphi$ is a group automorphism of $G$ which generates an effective $\Z$-action on $G$. Then $C^*(S_\varphi,(U_g)_{g \in G})$ is the crossed product $C^*_{r}(G) \rtimes_\alpha \Z$, where $\alpha(u_g) = u_{\varphi(g)}$. It is well-known that this crossed product is canonically isomorphic to the reduced group C*-algebra of the semidirect product $G \rtimes_\varphi \Z$. Hence, the universal model for $\CO_r[\varphi]$ is given by the full group C*-algebra of $G \rtimes_\varphi \Z$, provided that $G$ is amenable. The structure of these C*-algebras has already been studied extensively, see \cite{Wil}. In stark contrast, the situation for an injective, but non-surjective group endomorphism $\varphi$ has started to receive more attention in the recent past. The most elementary examples of such endomorphisms are $\times 2:\Z \longrightarrow \Z$ and the one-sided shift on $\bigoplus_{k \in \N} \Z/n\Z$ for $n \geq 2$. 

Restricting to the case where $G$ is amenable and $G/\varphi(G)$ is finite, Ilan Hirshberg introduced a universal C*-algebraic model $\CO[\varphi]$ for $\CO_r[\varphi]$ in 2002, see \cite{Hir}. He showed that the core $\CF \subset \CO[\varphi]$, which is the fixed point algebra under the canonical gauge action, is simple if $(\varphi^n(G))_{n \in \N}$ separates the points in $G$, that is, $\bigcap_{n \in \N} \varphi^n(G) = \{1_G\}$. Using simplicity of $\CF$, he concluded that $\CF$ is the crossed product of a natural commutative subalgebra $\CD$, called the diagonal, by $G$. Assuming that the family of subgroups $(\varphi^n(G))_{n \in \N}$ separates the points in $G$ and consists of normal subgroups of $G$, Hirshberg established that $\CO[\varphi]$ is simple and therefore isomorphic to $\CO_r[\varphi]$. Additionally, he computed the K-theory of $\CO[\varphi]$ based on the K-theory of $\CF$ and the Pimsner-Voiculescu six-term exact sequence for $\times n:\Z \longrightarrow \Z,n \geq 2$, the shift on $\bigoplus_\N H$, where $H$ is a finite group, and $\varphi:\Z/2\Z * \Z/2\Z \longrightarrow \Z/2\Z * \Z/2\Z, a \mapsto bab,b \mapsto aba$, where $a,b$ denote the standard generators of $\Z/2\Z * \Z/2\Z$.

A decade later, Felipe Vieira extended Hirshberg's results to the case where $G$ is amenable and $(\varphi^n(G))_{n \in \N}$ separates the points in $G$, see \cite{Vie}. His approach used techniques for semigroup crossed products as well as partial group crossed products. One remarkable outcome of his work is the connection to semigroup C*-algebras for left cancellative semigroups as introduced by Xin Li in \cites{Li1,Li2}: If $G$ is amenable, $(\varphi^n(G))_{n \in \N}$ separates the points in $G$, and $G/\varphi(G)$ is infinite, then $\CO[\varphi]$ is canonically isomorphic to the full semigroup C*-algebra of $G \rtimes_\varphi \N$. Furthermore, Vieira showed that this is the same as the reduced semigroup C*-algebra of $G \rtimes_\varphi \N$.

At about the same time, Joachim Cuntz and Anatoly Vershik examined the case where $G$ is abelian, $G/\varphi(G)$ is finite, and $(\varphi^n(G))_{n \in \N}$ separates the points in $G$, see \cite{CV}. They proved that $\CO[\varphi]$ is a UCT Kirchberg algebra and provided a general method to compute the K-theory of $\CO[\varphi]$. In addition, they found that the spectrum of the diagonal $\CD$ is a compact abelian group $G_\varphi$, which can be interpreted as a completion of $G$ with respect to $\varphi$. Another interesting outcome of \cite{CV} is the fact that $\CF \cong C(G_\varphi) \rtimes G$ is also isomorphic to $C(\hat{G}) \rtimes \hat{G}_\varphi$.

Summarizing the current status, it is fair to say that a lot is known about the C*-algebras $\CO[\varphi]$, $\CF$ and $\CD$ associated to a single injective, non-surjective group endomorphism $\varphi$ of a countably infinite, discrete group $G$. Indeed, in many cases we are able, at least in principle, to compute the K-theory for $\CO[\varphi]$, which is known to be a complete invariant due to the celebrated classification theorem by Eberhard Kirchberg and Christopher N. Phillips, see \cites{Kir,Phi}. Thus, by computing the K-theory of $\CO[\varphi]$, we can recover the information on the dynamical system $(G,\varphi)$ that is encoded in $\CO[\varphi]$. It is therefore natural to ask whether analogous results hold for similar dynamical systems involving more than one transformation. 

To motivate this question, let us mention an important example which showcases some interesting phenomena for such dynamical systems. In 1967, Hillel Furstenberg proved the following result, which applies for instance to $\times 2,\times 3:\IT \longrightarrow \IT$, the Pontryagin dual of $\times 2,\times 3:\Z \longrightarrow \Z$, see \cite{Fur}*{Part IV}: Every closed subset of $\IT$, which is invariant under the action of a non-lacunary subsemigroup of $\Z^\times$, is either finite or equals $\IT$. This led him to conjecture that a stronger form of rigidity might be true: Any invariant ergodic Borel probability measure on $\IT$ is either atomic or the Lebesgue measure on $\IT$. In its general form, this conjecture is still open. An important reduction step has been achieved by Daniel J. Rudolph, see \cite{Rud} and also \cite{Par} for a concise presentation. The conjecture has been verified by Manfred Einsiedler and Alexander Fish in 2010 for the case where the acting semigroup is sufficiently large in the sense that it has positive lower logarithmic density, see \cite{EF}. This form of measure rigidity has also been studied for certain reversible dynamical systems, see \cite{EK} and the references therein. In a different direction, Daniel J. Berend and Roman Muchnik generalised the rigidity result from \cite{Fur} stated above to compact abelian groups, see \cites{Ber1,Ber2,Muc}.

Coming back to $\times p,\times q:\IT \longrightarrow \IT$ for relatively prime integers $p,q \geq 2$, it is natural to ask: What are the essential features of this dynamical systems? By Pontryagin duality, it corresponds to $\times p,\times q:\Z \longrightarrow \Z$. The condition that $p$ and $q$ are relatively prime is mirrored both by $p\Z + q\Z = \Z$ and $p\Z \cap q\Z = pq\Z$. These simple facts led Joachim Cuntz and Anatoly Vershik to define the notion of independence for pairs of commuting injective group endomorphisms $\varphi$ and $\psi$ of a discrete abelian group $G$   with the restriction that $G/\varphi(G)$ and $G/\psi(G)$ be finite, see  \cite{CV}*{Section 5}: $\varphi$ and $\psi$ are said to be \emph{independent} if $\varphi(G) \cap \psi(G) = \varphi\psi(G)$. It is shown in \cite{CV}*{Lemma 5.1} that independence is equivalent to $\varphi(G)+\psi(G) = G$ as well as to the statement that the inclusion $\varphi(G) \hookrightarrow G$ induces an isomorphism $\varphi(G)/(\varphi(G) \cap \psi(G)) \cong G/\psi(G)$.  

In this paper, we will extend the notion of independence to the general case of two commuting injective group endomorphisms $\varphi$ and $\psi$ of a discrete group $G$. In particular, we show that the last equivalence still holds if we only ask for a bijection $\varphi(G)/(\varphi(G) \cap \psi(G)) \longrightarrow G/\psi(G)$, see Proposition~\ref{prop:IAD ind cond}. But $\varphi(G) \cap \psi(G) = \varphi\psi(G)$ turns out to be weaker than $\varphi(G)\psi(G) = G$, where $\varphi(G)\psi(G) = \{\varphi(g)\psi(g') \mid g,g' \in G\}$, see Example~\ref{ex:ind doesn't give strong ind}. We will therefore differentiate between independence and what we call strong independence, see Definition~\ref{def:ind}. An equivalent characterisation of independence can be given in terms of the isometries $S_\varphi, S_\psi \in \ell^2(G)$: The commuting endomorphisms $\varphi$ and $\psi$ are independent if and only if $S_\varphi^*S_\psi = S_\psi S_\varphi^*$ holds.

With this notion of independence for commuting injective group endomorphisms of discrete groups at our disposal, we can think of $\times 2,\times 3:\Z \longrightarrow \Z$ in an abstract way as a dynamical system $(G,P,\theta)$ given by
\begin{enumerate}[(A)]
\item a countably infinite, discrete group $G$ with unit $1_G$,
\item a countably generated, free abelian monoid $P$ with unit $1_P$, and 
\item a $P$-action $\theta$ on $G$ by injective group endomorphisms for which $\theta_p$ and $\theta_q$ are independent if and only if $p$ and $q$ are relatively prime.
\end{enumerate}
We will refer to triples $(G,P,\theta)$ satisfying the three requirements stated above as \emph{irreversible algebraic dynamical systems}. The term \emph{irreversible} is chosen because $\theta_p \in \Aut(G)$ implies $p = 1_P$, and \emph{algebraic} emphasizes the contrast to topological dynamical systems, since the imposed conditions are purely algebraic. More specifically, such dynamical systems can be regarded as irreversible analogues of algebraic dynamical systems as introduced by Klaus Schmidt, see \cites{Sch1,Sch2,LS} and the references therein. 

Within Section~2, we specialise to the case where the group $G$ is abelian. Using Pontryagin duality and some fact about annihilators, we arrive at a notion of independence for commuting surjective group endomorphisms of an arbitrary group, see Definition~\ref{def:ind surj} and also \cite{CV}*{Lemma 5.4}. This allows us to describe commutative irreversible algebraic dynamical systems $(G,P,\theta)$ entirely in terms of their dual models $(\hat{G},P,\hat{\theta})$, see Proposition~\ref{prop:CIAD as a topological dynamical system}. It is precisely this description which represents the close connection irreversible algebraic dynamical systems and irreversible $*$-commuting dynamical systems, see \cite{Sta2} for more information. 

Section~3 is devoted to the construction and study of a universal C*-algebra $\CO[G,P,\theta]$ associated to each irreversible algebraic dynamical system $(G,P,\theta)$. This C*-algebra is a direct generalisation of the C*-algebra $\CO[\varphi]$ that appeared in \cites{CV,Hir,Vie} and we show that the structural properties of $\CO[G,P,\theta]$ are in good accordance with the ones that have been found for $\CO[\varphi]$. More precisely, we prove that the spectrum $G_\theta$ of the (commutative) diagonal subalgebra $\CD$ of $\CO[G,P,\theta]$ can be interpreted as a completion of $G$ with respect to $\theta$ if $(G,P,\theta)$ is \emph{minimal} in the sense that $\bigcap_{p \in P}\theta_p(G) = \{1_G\}$, see Lemma~\ref{lem:IAD spec D as a completion of G}. This is a direct extension of \cite{CV}*{Lemma 2.4}. The C*-algebra $\CO[G,P,\theta]$ is then identified with the semigroup crossed product $\CD \rtimes (G \rtimes_\theta P)$, where $(g,p).d = u_gs_pd(u_gs_p)^*$, see Proposition~\ref{prop:O cong D rtimes GxP}. Using the decomposition theorem for crossed products by semidirect products of monoids provided in the appendix, see Theorem~\ref{thm:cr prod by sd prod as it cr prod}, the isomorphism between $\CO[G,P,\theta]$ and $\CD \rtimes (G\rtimes_\theta P)$ yields an isomorphism of $\CF$ and $C(G_\theta) \rtimes_\tau G$, where $g.d = u_gdu_g^*$, see Corollary~\ref{cor:F cong D rtimes G-general IAD}. 

As a next step, we show that minimality of $(G,P,\theta)$ and amenability of the $G$-action $\hat{\tau}$ on $G_\theta$ are sufficient for simplicity and pure infiniteness of $\CO[G,P,\theta]$, see Theorem~\ref{thm:O p.i. and simple}. The general idea of the proof of this result goes back to \cite{Cun}, but the technical details are more involved compared to the singly generated case, see \cite{CV}*{Theorem 2.6}. But with this result at hands, we get that minimality of $(G,P,\theta)$ and amenability of $\hat{\tau}$ imply that $\CO[G,P,\theta]$ is a UCT Kirchberg algebra, hence classifiable by K-theory due to \cites{Kir,Phi}, see Corollary~\ref{cor:UCT Kirchberg algebra}. Unfortunately, the computation of the K-theory of $\CO[G,P,\theta]$, beyond the case of a single group endomorphism for which this has been accomplished in \cite{CV}, is a hard problem, at least with the techniques currently available.

In Section~4, we restrict our focus to the case where $G/\theta_p(G)$ is finite for all $p \in P$. We find that, in case $G$ is amenable and $(G,P,\theta)$ is minimal, the core $\CF$ is a generalised Bunce-Deddens algebra in the sense of \cite{Orf}, see Proposition~\ref{prop:gen BD-alg for IADoFT} and \cite{Orf}. In this case, $\CF$ is classified by its Elliott invariant due to a combination of results from \cites{Lin,MS,Win}, see Corollary~\ref{cor:IAD class of the core}. In addition, we find an intriguing chain of isomorphisms $\CF \cong C(G_\theta) \rtimes_\tau G \cong C(\hat{G}) \rtimes_{\bar{\tau}} \hat{G}_\theta$ in the case where $(G,P,\theta)$ is minimal and $G$ is commutative, see Corollary~\ref{cor:dual cr prod for aIADoFT}. The corresponding result for the case of a single group endomorphism was established in \cite{CV}*{Section 2}.

Section~5 provides an alternative approach to the C*-algebra $\CO[G,P,\theta]$ as the Cuntz-Nica-Pimsner algebra of a discrete product systems of Hilbert bimodules naturally associated to $(G,P,\theta)$, see Theorem~\ref{thm:isom ad-hoc PS for IADoFT}. Discrete product systems form a generalisation of the original construction introduced by Mihai Pimsner in \cite{Pim} for a single Hilbert bimodule. We refer to \cites{Fow1,Fow2,Sol,Yee,SY,CLSV,HLS} for more information on the subject. One interesting aspect is that the product system $\CX$ associated to $(G,P,\theta)$ comes with a canonical system of orthonormal bases on its fibres $\CX_p$, obtained by choosing a transversal for $G/\theta_p(G)$, see Proposition~\ref{prop:PS for an IAD}. 

A particular advantage of realizing $\CO[G,P,\theta]$ as the Cuntz-Nica-Pimsner algebra of the product system $\CX$ is that it has a natural Toeplitz extension, called the Nica-Toeplitz algebra. 
This algebra will be studied in a forthcoming paper together with Nathan Brownlowe and Nadia S.Larsen, where we show that the Nica-Toeplitz algebra associated to an irreversible algebraic dynamical system $(G,P,\theta)$ is canonically isomorphic to the (full) semigroup C*-algebra $C^*(G \rtimes_\theta P)$ in the sense of Xin Li, see \cites{Li1,Li2}. In fact, we will prove this in a more general context where $P$ may be an arbitrary right LCM semigroup in the sense of \cite{BLS1}. Moreover, this C*-algebra coincides with $\CO[G,P,\theta]$ for irreversible algebraic dynamical systems of infinite type $(G,P,\theta)$, that is, $G/\theta_p(G)$ is infinite for all $p \neq 1_P$. This sheds new light on the results from \cite{Vie} mentioned in the beginning.\vspace*{3mm}

\noindent \textit{Acknowledgements.} This paper constitutes one part of the author's doctoral thesis completed at Westf\"alische Wilhelms-Universit\"at M\"unster under the supervision of Joachim Cuntz, whose support is highly appreciated. The author is also grateful to Nadia S. Larsen and Nathan Brownlowe for numerous discussions on related topics, to Jos\'{e} Carri\'{o}n for discussing generalised Bunce-Deddens algebras, to Wilhelm Winter for sharing the idea behind Corollary~\ref{cor:IAD class of the core}, as well as to Sel\c{c}uk Barlak and G\'{a}bor Szab\'{o} for valuable feedback on preliminary versions.

\section{Irreversible algebraic dynamical systems}\label{sec1}	
\noindent The purpose of this section is to familiarize with the primary object of interest called irreversible algebraic dynamical system in its most general form. Vaguely speaking, such a dynamical system is given by a countably infinite, discrete group $G$ and at most countably many commuting injective, non-surjective group endomorphisms $(\theta_i)_{i \in I}$ of $G$ that are independent in the sense that the intersection of their images is as small as possible. Additionally, we will introduce a minimality condition stating that the intersection of the images of the group endomorphisms from the semigroup generated by $(\theta_i)_{i \in I}$ is trivial. In other words, the group endomorphisms $(\theta_i)_{i \in I}$ (more precisely, finite products of these) separate the points in $G$. At a later stage, namely in Theorem~\ref{thm:O p.i. and simple}, this condition is shown to be intimately connected to simplicity of the C*-algebra $\CO[G,P,\theta]$ associated to such a dynamical system in Definition~\ref{def:O-algebra ad-hoc}.

The following observation is an extension of the concept of independence introduced in \cite{CV}*{Section 5}. In contrast to the situation in \cite{CV}, we will require neither the group $G$ to be abelian nor the cokernels of the injective group endomorphisms of $G$ to be finite.

\begin{proposition}\label{prop:IAD ind cond}\index{independence conditions}
Suppose $G$ is a group. Consider the following statements for two commuting injective group endomorphisms $\theta_{1}$ and $\theta_2$ of $G$:
\begin{enumerate}[(i)]
\item $\theta_{1}(G) \theta_{2}(G) = G$.
\item The map $\theta_{1}(G)/(\theta_{1}(G) \cap \theta_{2}(G)) \longrightarrow G/\theta_{2}(G)$ induced by the inclusion $\theta_{1}(G) \hookrightarrow G$ is a bijection.
\item[(ii')] The map $\theta_{2}(G)/(\theta_{1}(G) \cap \theta_{2}(G)) \longrightarrow G/\theta_{1}(G)$ induced by the inclusion $\theta_{2}(G) \hookrightarrow G$ is a bijection.
\item $\theta_1(G) \cap \theta_2(G) = \theta_1\theta_2(G)$.
\end{enumerate}
Then (i),(ii), and (ii') are equivalent and imply (iii). If either of the subgroups $\theta_1(G)$ or $\theta_2(G)$ is of finite index in $G$, then (i)--(iii) are equivalent.
\end{proposition}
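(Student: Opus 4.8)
The plan is to fix the notation $H:=\theta_1(G)$ and $K:=\theta_2(G)$, and to extract the cheap consequences of the hypotheses first. Commutativity gives $\theta_1\theta_2=\theta_2\theta_1$, hence $\theta_1(K)=\theta_1\theta_2(G)=\theta_2\theta_1(G)=\theta_2(H)$, and injectivity of $\theta_1,\theta_2$ makes each of these a faithful copy of $G$. In particular $\theta_1\theta_2(G)=\theta_1(K)\subseteq H$ and $\theta_1\theta_2(G)=\theta_2(H)\subseteq K$, so the inclusion $\theta_1\theta_2(G)\subseteq H\cap K$ holds with no further assumption; this is the trivial half of (iii). I would also record once and for all that $HK=G\iff KH=G$ (apply inversion, using $H^{-1}=H$, $K^{-1}=K$), so that (i) is left--right symmetric and the two forms needed for (ii) and (ii') match up.

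For the equivalence of (i), (ii), (ii') I would show that the map in (ii) is \emph{always} well defined and injective, so that its bijectivity is equivalent to surjectivity. Concretely, for $a,b\in H$ the equality $aK=bK$ means $b^{-1}a\in K$; since $b^{-1}a\in H$ automatically, this says $b^{-1}a\in H\cap K$, i.e. $a(H\cap K)=b(H\cap K)$, so well-definedness and injectivity hold verbatim. Surjectivity then reads: every coset $gK$ has a representative in $H$, i.e. $G=HK$, which is exactly (i). Hence (i)$\iff$(ii); the identical argument with $H$ and $K$ exchanged gives (i)$\iff$(ii'), using $HK=G\iff KH=G$ to identify the relevant form of (i). Note that injectivity of these maps is automatic and carries no extra information, so the entire content of (ii)/(ii') is the surjectivity statement (i).

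The substantive point is (i)$\Rightarrow$(iii), i.e. the reverse inclusion $H\cap K\subseteq\theta_1\theta_2(G)$. I would first reformulate it: since $\theta_2$ is injective and $\theta_2(H)=\theta_1\theta_2(G)\subseteq H$, the endomorphism $\theta_2$ descends to a self-map $\bar\theta_2\colon G/H\to G/H$, $gH\mapsto\theta_2(g)H$, of the coset space. One checks that $\bar\theta_2$ is surjective precisely when $G=HK$, i.e. under (i), while the reverse inclusion in (iii) is equivalent to $\theta_2^{-1}(H)\subseteq H$, which is precisely injectivity of $\bar\theta_2$. Thus (i)$\Rightarrow$(iii) amounts to the assertion that this surjective $\bar\theta_2$ is injective.

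This surjective-implies-injective step is the main obstacle: it is a Hopfian-type statement and is \emph{not} formal for an infinite coset space, which is exactly why the full equivalence of (i)--(iii) is only claimed under a finiteness hypothesis. When $\theta_2(G)$ (or, symmetrically, $\theta_1(G)$) has finite index, I would sidestep Hopficity by a clean index count instead. The isomorphism $\theta_1\colon G\to H$ carries $K$ onto $\theta_1(K)=\theta_1\theta_2(G)$, so $[H:\theta_1\theta_2(G)]=[G:K]$; combined with the tower $\theta_1\theta_2(G)\subseteq H\cap K\subseteq H$ this gives $[G:K]=[H:H\cap K]\,[H\cap K:\theta_1\theta_2(G)]$, while the injective map of (ii) yields $[H:H\cap K]\le[G:K]$ with equality iff (i). All indices being finite, equality in (i) forces $[H\cap K:\theta_1\theta_2(G)]=1$, which is the reverse inclusion; conversely (iii) forces $[H:H\cap K]=[G:K]$, hence (i). This single computation simultaneously delivers the reverse inclusion for (i)$\Rightarrow$(iii) and the converse (iii)$\Rightarrow$(i) in the finite-index case, and the only genuinely delicate point throughout is the Hopfian step that the counting argument avoids.
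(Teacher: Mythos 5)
Your treatment of (i) $\Leftrightarrow$ (ii) $\Leftrightarrow$ (ii') is correct and coincides with the paper's argument: the inclusion-induced map is always well defined and injective, and its surjectivity is exactly (i). Your index count in the finite-index case is also correct and cleanly delivers both remaining implications there. The gap is in the unconditional part: the proposition asserts that (i), (ii), (ii') imply (iii) \emph{without} any finiteness hypothesis, whereas you reduce this to the Hopfian-type assertion that the induced self-map $\bar\theta_2$ of the coset space $G/\theta_1(G)$ is injective whenever it is surjective, observe that this is not formal, and then prove it only under finite index, on the (mistaken) reading that the unconditional implication is not being claimed. Measured against the statement, your proof is therefore incomplete.

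That said, your suspicion about the Hopfian step is vindicated: the implication (i) $\Rightarrow$ (iii) is in fact false in general, so the gap cannot be filled. Take $G=\bigoplus_{n\ge1}\Z e_n$, let $\theta_2$ be multiplication by a prime $p$, and let $\theta_1$ be the endomorphism with $\theta_1(e_1)=pe_1$ and $\theta_1(e_n)=pe_n-e_{n-1}$ for $n\ge2$; both are injective, they commute, and $G/\theta_1(G)\cong\Z(p^\infty)$. The identity $e_n=pe_{n+1}-\theta_1(e_{n+1})$ shows $\theta_1(G)+\theta_2(G)=G$, so (i) holds; but $pe_1=\theta_1(e_1)=\theta_2(e_1)$ lies in $\theta_1(G)\cap\theta_2(G)$, while $pe_1\in\theta_1\theta_2(G)=p\,\theta_1(G)$ would force $e_1\in\theta_1(G)$, which fails since the class of $e_1$ in $G/\theta_1(G)$ is the nonzero element $1/p$. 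Hence (iii) fails. The paper's own proof of (ii) $\Rightarrow$ (iii) breaks at the point where $f_1^{-1}f_2$ is identified with the canonical quotient map $\theta_1(G)/\theta_1\theta_2(G)\to\theta_1(G)/(\theta_1(G)\cap\theta_2(G))$: when $f_2$ is the bijection coming from $\theta_1^{-1}$, that identification is wrong (in the example above $f_1^{-1}f_2$ sends $pe_1+\theta_1\theta_2(G)$ to $(e_1-pe_2)+(\theta_1(G)\cap\theta_2(G))\neq 0$), and when $f_2$ is instead taken to be the inclusion-induced map, its injectivity is precisely the statement (iii) being proved. So what you have actually established — the equivalence of (i), (ii), (ii'), the trivial containment $\theta_1\theta_2(G)\subseteq\theta_1(G)\cap\theta_2(G)$, and the full equivalence of (i)--(iii) under a finite-index hypothesis — is exactly the part of the proposition that is true.
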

\begin{proof}
Note that we always have $\theta_1(G)\theta_2(G) \subset G$ and $\theta_1(G) \cap \theta_2(G) \supset \theta_1\theta_2(G)$. Moreover, in condition (ii), the inclusion $\theta_1(G) \hookrightarrow G$ induces an injective map $\theta_1(G)/(\theta_1(G) \cap \theta_2(G)) \longrightarrow G/\theta_2(G)$. The corresponding statement holds for (ii').\\ 
If (i) holds true, then $G \ni g = \theta_1(g_1)\theta_2(g_2)$ for suitable $g_i \in G$. Hence, the left-coset of $\theta_1(g_1)$ maps to the left-coset of $g$ and (ii) follows.

Conversely, suppose (ii) is valid and pick $g \in G$. Then there is $g_1 \in G$ such that $\theta_1(g_1)\left(\theta_1(G) \cap \theta_2(G)\right) \mapsto g\theta_2(G)$ via the map from (ii). But since this map comes from the inclusion $\theta_1(G) \into G$, we have $g\theta_2(G) = \theta_1(g_1)\theta_2(G)$. Thus, there is $g_2 \in G$ such that $g = \theta_1(g_1)\theta_2(g_2)$ showing (i). The equivalence of (i) and (ii') is obtained from the previous argument by swapping $\theta_1$ and $\theta_2$. Given (ii), that is, 
\[f_1: \theta_1(G)/(\theta_1(G) \cap \theta_2(G)) \longrightarrow G/\theta_2(G)\] 
is a bijection (induced by $\theta_2(G) \hookrightarrow G$), composing $f_1^{-1}$ with the bijection
\[f_2: \theta_1(G)/(\theta_1\theta_2(G)) \longrightarrow G/\theta_2(G)\]
obtained from injectivity of $\theta_1$ yields a bijection
\[f_1^{-1}f_2: \theta_1(G)/(\theta_1\theta_2(G)) \longrightarrow \theta_1(G)/(\theta_1(G) \cap \theta_2(G)).\]
Let us assume $\theta_1\theta_2(G) \subsetneqq \theta_1(G) \cap \theta_2(G)$. This means, that there is $g \in \theta_1(G)$ such that $g \theta_1\theta_2(G) \neq \theta_1\theta_2(G)$ but $g\theta_1(G) \cap \theta_2(G) = \theta_1(G) \cap \theta_2(G)$. Noting that $f_1^{-1}f_2$ maps a left-coset $g'\theta_1\theta_2(G)$ to $g'\theta_1(G) \cap \theta_2(G)$, this contradicts injectivity of $f_1^{-1}f_2$. Hence, we must have $\theta_1(G) \cap \theta_2(G) = \theta_1\theta_2(G)$. Similarly, (iii) follows from (ii').\\  
Finally, suppose (iii) holds. By injectivity of $\theta_1$, we have 
\[\theta_1(G)/(\theta_1(G) \cap \theta_2(G)) = \theta_1(G)/\theta_1\theta_2(G) \cong G/\theta_2(G).\]
So if $[G : \theta_2(G)]$ is finite, then the injective map from (ii) is necessarily a bijection. If $[G : \theta_1(G)]$ is finite, we get (ii') in the same manner.  
\end{proof}

\begin{remark}\label{rem:ind normal}
If the subgroups $\theta_1(G)$ and $\theta_2(G)$ are both normal in $G$, then $\theta_1(G)\theta_2(G)$ is a normal subgroup of $\theta_i(G), i=1,2$, and the bijections in Proposition~\ref{prop:IAD ind cond}~(ii) and (ii') are isomorphisms of groups.
\end{remark}

\begin{definition}\label{def:ind}
Let $G$ be a group and $\theta_{1},\theta_2$ commuting, injective group endomorphisms of $G$. Then $\theta_1$ and $\theta_2$ are said to be \emph{independent}, if they satisfy condition (iii) from Proposition~\ref{prop:IAD ind cond}. $\theta_1$ and $\theta_2$ are said to be \emph{strongly independent}, if they satisfy the condition (i) from Proposition~\ref{prop:IAD ind cond}. 
\end{definition}

\noindent Note that (strong) independence holds if $\theta_1$ or $\theta_2$ is an automorphism.

\begin{lemma}\label{lem:ind under prod for inj endos}
Let $G$ be a group and suppose $\theta_{1},\theta_2,\theta_3$ are commuting, injective group endomorphisms of $G$. $\theta_1$ is (strongly) independent of $\theta_2\theta_2$ if and only if $\theta_1$ is (strongly) independent of both $\theta_2$ and $\theta_3$.
\end{lemma}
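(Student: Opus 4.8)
The plan is to verify both equivalences directly from the defining conditions recorded in Proposition~\ref{prop:IAD ind cond}: condition (i) for strong independence and condition (iii) for independence. (I read the product as $\theta_2\theta_3$.) I will treat the two cases separately, using commutativity of the $\theta_i$ and the homomorphism property throughout, and — decisively in the independence case — injectivity of the $\theta_i$. I also record once the elementary inclusions $\theta_2\theta_3(G) \subseteq \theta_2(G)$ and $\theta_2\theta_3(G) \subseteq \theta_3(G)$, which follow from commutativity, and the always-valid inclusion $\theta_1\theta_j(G) \subseteq \theta_1(G) \cap \theta_j(G)$.

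For strong independence the argument is short. Assuming $\theta_1(G)\theta_2(G) = G$ and $\theta_1(G)\theta_3(G) = G$, I take $g \in G$, first write $g = \theta_1(a)\theta_3(b)$, then expand $b = \theta_1(c)\theta_2(d)$; applying $\theta_3$ and commuting turns $\theta_3(b)$ into $\theta_1\theta_3(c)\,\theta_2\theta_3(d)$, so that $g = \theta_1(a\,\theta_3(c))\,\theta_2\theta_3(d) \in \theta_1(G)\theta_2\theta_3(G)$, which gives condition (i) for the pair $(\theta_1,\theta_2\theta_3)$. Conversely, if $\theta_1(G)\theta_2\theta_3(G) = G$, the recorded inclusions sandwich $G = \theta_1(G)\theta_2\theta_3(G) \subseteq \theta_1(G)\theta_2(G) \subseteq G$, and likewise with $\theta_3$, forcing both products to equal $G$.

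For independence I prove the two set equalities by supplying the reverse of the trivial inclusion above. Assuming $\theta_1$ is independent of both $\theta_2$ and $\theta_3$: given $x \in \theta_1(G) \cap \theta_2\theta_3(G)$, the inclusion $\theta_2\theta_3(G) \subseteq \theta_2(G)$ places $x \in \theta_1(G) \cap \theta_2(G) = \theta_1\theta_2(G)$, so I may write $x = \theta_1\theta_2(c) = \theta_2\theta_3(b)$; cancelling $\theta_2$ by injectivity yields $\theta_1(c) = \theta_3(b) \in \theta_1(G) \cap \theta_3(G) = \theta_1\theta_3(G)$, and cancelling $\theta_1$ gives $c \in \theta_3(G)$, whence $x \in \theta_1\theta_2\theta_3(G)$. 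In the converse direction, assuming $\theta_1$ is independent of $\theta_2\theta_3$, the key device is to apply a spare endomorphism: for $x \in \theta_1(G) \cap \theta_2(G)$ I consider $\theta_3(x)$, note $\theta_3(x) \in \theta_1(G) \cap \theta_2\theta_3(G) = \theta_1\theta_2\theta_3(G)$, write $\theta_3(x) = \theta_1\theta_2\theta_3(c) = \theta_3\theta_1\theta_2(c)$, and cancel $\theta_3$ to obtain $x = \theta_1\theta_2(c) \in \theta_1\theta_2(G)$; applying $\theta_2$ instead of $\theta_3$ settles independence of $\theta_1$ and $\theta_3$ symmetrically.

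Once the right elements are selected the computation is essentially mechanical, so I anticipate no serious obstacle. The only point demanding care is the systematic cancellation of endomorphisms via injectivity — precisely the feature that separates the independence case (iii) from the strong independence case (i), and the reason the third endomorphism is required to be injective.
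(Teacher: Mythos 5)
Your proof is correct and follows essentially the same route as the paper's: the same substitution trick for strong independence, and the same use of injectivity to cancel a spare endomorphism (the paper phrases your element-wise cancellations as preimage/image identities like $\theta_1(G)\cap\theta_2(G)=\theta_3^{-1}(\theta_1\theta_3(G)\cap\theta_2\theta_3(G))$ and $\theta_2(\theta_1(G)\cap\theta_3(G))=\theta_1\theta_2\theta_3(G)$, but the content is identical). You also correctly read the statement's $\theta_2\theta_2$ as the intended $\theta_2\theta_3$.
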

\begin{proof}
If $\theta_1$ and $\theta_2\theta_3$ are strongly independent, then 
\[\theta_1(G)\theta_2(G) \supset \theta_1(G)\theta_2(\theta_3(G)) = G\]
shows that $\theta_1$ and $\theta_2$ are strongly independent. As $\theta_2$ and $\theta_3$ commute, $\theta_1$ is also strongly independent of $\theta_3$. Conversely, if $\theta_1$ is strongly independent of both $\theta_2$ and $\theta_3$, then
\[\begin{array}{lclcl}
G &=& \theta_1(G)\theta_2(G) &=& \theta_1(G)\theta_2(\theta_1(G)\theta_3(G))\vspace*{2mm}\\
&=& \theta_1(G\theta_2(G))\theta_2(\theta_3(G)) &\subset& \theta_1(G)\theta_2\theta_3(G),
\end{array}\]
so $\theta_1$ and $\theta_2\theta_3$ are strongly independent since the reverse inclusion is trivial. If $\theta_1$ and $\theta_2\theta_3$ are independent, then commutativity of $\theta_1,\theta_2$ and $\theta_3$ in combination with injectivity of $\theta_3$ yield
\[\begin{array}{lclcl}
\theta_1(G) \cap \theta_2(G) &=& \theta_3^{-1}(\theta_1\theta_3(G) \cap \theta_2\theta_3(G)) &\subset&  \theta_3^{-1}(\theta_1(G) \cap \theta_2\theta_3(G))\vspace*{2mm}\\ 
&=&  \theta_3^{-1}(\theta_1\theta_2\theta_3(G)) &=& \theta_1\theta_2(G).
\end{array}\]
Since the reverse inclusion is always true, we conclude that $\theta_1$ and $\theta_2$ are independent. Exchanging the role of $\theta_2$ and $\theta_3$ shows independence of $\theta_1$ and $\theta_3$.
Finally, if $\theta_1$ is independent of both $\theta_2$ and $\theta_3$, we get
\[\begin{array}{lclcl}
\theta_1(G) \cap \theta_2\theta_3(G) &=& \theta_1(G) \cap \theta_2(G) \cap \theta_2\theta_3(G) &=& \theta_1\theta_2(G) \cap \theta_2\theta_3(G)\vspace*{2mm}\\ 
&=& \theta_2(\theta_1(G) \cap \theta_3(G)) &=& \theta_1\theta_2\theta_3(G)
\end{array}\]
by injectivity of $\theta_2$. Thus $\theta_1$ and $\theta_2\theta_3$ are independent.
\end{proof}

\noindent If $(P,\leq)$ is a lattice-ordered monoid with unit $1_P$, we shall denote the least common multiple and the greatest common divisor of two elements $p,q \in P$ by $p \vee q$ and $p \wedge q$, respectively. $p$ and $q$ are said to be relatively prime (in $P$) if $p \wedge q = 1_P$ or, equivalently, $p \vee q = pq$. Simple examples of such monoids are countably generated free abelian monoids since such monoids are either isomorphic to $\N^k$ for some $k \in \N$ or $\bigoplus_\N \N$.

\begin{definition}\label{def:IAD}\label{def:aIAD and aIADoFT} 
An \emph{irreversible algebraic dynamical system} $(G,P,\theta)$ is 
\begin{enumerate}[(A)]
\item a countably infinite, discrete group $G$ with unit $1_G$,
\item a countably generated, free abelian monoid $P$ with unit $1_P$, and 
\item a $P$-action $\theta$ on $G$ by injective group endomorphisms for which $\theta_p$ and $\theta_q$ are independent if and only if $p$ and $q$ are relatively prime. 
\end{enumerate}
An irreversible algebraic dynamical system $(G,P,\theta)$ is said to be
\begin{enumerate}[$\cdot$]
\item \emph{minimal}, if $\bigcap_{p \in P}{\theta_{p}(G)} = \{1_G\}$,
\item \emph{commutative}, if $G$ is commutative, 
\item \emph{of finite type}, if $[G : \theta_p(G)]$ is finite for all $p \in P$, and
\item \emph{of infinite type}, if $[G : \theta_p(G)]$ is infinite for all $p \neq 1_P$.
\end{enumerate} 
\end{definition}

\begin{remark}\label{rem:IAD first remarks_change}\label{rem:IAD ind cond} 
$\theta_{1_P} = id_G$ is the only automorphism of $G$ occurring for this setting. Indeed, if $\theta_p$ is an automorphism of $G$, it is independent of itself. So unless $P = \{1_P\}$, there is $p \in P$ such that $[G : \theta_{p}(G)] > 1$. Therefore, $\theta_p(G)$ is a proper subgroup of $G$. Since $\theta_p$ is injective, $G$ has to be infinite.
\end{remark}   

\begin{remark}
The minimality condition has been used under the name \emph{exactness} in the case of a commutative $G$ with a single endomorphism with finite cokernel in \cite{CV}. As explained in \cite{CV}*{Remark 2.1}, the notion of exactness for a single endomorphism stems from ergodic theory and is a well-studied property for irreversible, measure-preserving transformations. However, for the specific setup that we use, this property was already considered by Ilan Hirshberg in \cite{Hir}, where he called such endomorphisms \emph{pure}. Nevertheless, we refer to this property as minimality for two reasons: 
\begin{enumerate}[1.]
\item For commutative irreversible algebraic dynamical systems, the corresponding condition for the dual model $(\hat{G},P,\hat{\theta})$ is minimality of the (irreversible) topological dynamical system, see Proposition~\ref{prop:CIAD as a topological dynamical system}. 
\item The property is intimately linked to simplicity of the C*-algebras $\CO[G,P,\theta]$ and $\CF$, see Corollary~\ref{cor:D rtimes G simple iff IAD minimal} and Theorem~\ref{thm:O p.i. and simple}.
\end{enumerate}
\end{remark}

\begin{examples}\label{ex:IAD standard}
There are various examples for commutative irreversible algebraic dynamical systems and most of them are of finite type. Let us recall that it suffices to check independence of the endomorphisms on the generators of $P$ according to Lemma~\ref{lem:ind under prod for inj endos}.
\begin{enumerate}[(a)]
\item Choose a family $(p_i)_{i \in I} \subset \Z^\times{\setminus}\Z^* = \Z{\setminus}\{0,\pm 1\}$ and let $P = |(p_i)_{i \in I}\rangle$ act on $G=\Z$ by $\theta_{p_i}(g) = p_ig$. Since $\Z$ is an integral domain, each $\theta_{p_i}$ is an injective group endomorphism of $G$ with $[G:\theta_{p_i}(G)] = p_i$. For $i \neq j$, $\theta_{p_i}$ and $\theta_{p_j}$ are independent if and only if $p_i$ and $p_j$ are relatively prime in $\Z$. Thus, we get a commutative irreversible algebraic dynamical system of finite type if and only if $(p_i)_{i \in I}$ consists of relatively prime integers. Since the number of factors in its prime factorization is finite for every integer, such irreversible algebraic dynamical systems are automatically minimal. 
\item Let $I\subset\N$, choose relatively prime integers $\{q\} \cup (p_i)_{i \in I} \subset \Z^\times{\setminus}\Z^*$ and let $G = \Z[1/q]$. As $\Z[1/q] = \varinjlim \Z$ with connecting maps given by multiplication with $q$, and $q$ is relatively prime to each $p_i$, the arguments from (a) carry over almost verbatim. Thus we get minimal commutative irreversible algebraic dynamical systems of finite type $(G,P,\theta)$ which generalise \cite{CV}*{Example 2.1.5}.
\item Let $\IK$ be a countable field and let $G=\IK[T]$ denote the polynomial ring in a single variable $T$ over $\IK$. Choose non-constant polynomials $p_i \in \IK[T], i \in I$. Multiplying by $p_i$ defines an endomorphism $\theta_{p_i}$ of $G$ with $[G:\theta_{p_i}(G)] = |\IK|^{\deg(p_i)}$, where $\deg(p_i)$ denotes the degree of $p_i \in \IK[T]$. Thus, if we let $P := |(p_i)_{i \in I}\rangle$, then the index of $\theta_p(G)$ in $G$ is finite for all $p \in P$ if and only if $\IK$ is finite. It is clear that $\theta_{p_i}$ and $\theta_{p_j}$ are independent if and only if $(p_i) \cap (p_j) = (p_ip_j)$ holds for the principal ideals (whenever $i \neq j$). Since every $g \in \IK[T]$ has finite degree, $(G,P,\theta)$ is automatically minimal. Thus, provided $(p_i)_{i \in I}$ has been chosen accordingly, we obtain a minimal commutative irreversible algebraic dynamical system which is of finite type if and only if $\IK$ is finite, compare \cite{CV}*{Example 2.1.4}.  
\end{enumerate}
\end{examples}

\begin{example}\label{ex:IAD integer matrices}
For $G=\Z^d$ with $d \geq 1$, the monoid of injective group endomorphisms of $G$ is isomorphic to the monoid of invertible integral matrices $M_d(\Z) \cap Gl_d(\IQ)$. For each such endomorphism, the index of its image in $G$ is given by the absolute value of the determinant of the corresponding matrix. In particular, their images always have finite index in $G$ and an endomorphism of $G$ is not surjective precisely if the absolute value of the determinant of the matrix exceeds $1$. So let $(T_i)_{i \in I} \subset M_d(\Z) \cap Gl_d(\IQ)$ be a family of commuting matrices satisfying $|\det T_i| > 1$ for all $i \in I$ and set $P = |(T_i)_{i \in I}\rangle$ as well as $\theta_i(g) = T_ig$. For $i \neq j$, it is easier to check strong independence of $\theta_i$ and $\theta_j$ instead of independence. Indeed, since we are dealing with a finite type case, the two conditions are equivalent and strong independence takes the form $T_i(\Z^d)+T_j(\Z^d) = \Z^d$, see Proposition~\ref{prop:IAD ind cond}. This condition can readily be checked. Moreover, minimality is related to generalised eigenvalues and we note that, in the case where $P$ is singly generated, the generating integer matrix has to be a \emph{dilation matrix}. This situation has been studied extensively in \cite{EaHR}.
\end{example}

\noindent Example~\ref{ex:IAD standard}~(a) can be generalised to the case of rings of integers:

\begin{example}\label{ex:IAD rings of integers}
Let $\CR$ be the ring of integers in a number field and denote by $\CR^\times = \CR\setminus\{0_\CR\}$ the multiplicative subsemigroup as well as by $\CR^* \subset \CR^\times$ the group of units in $\CR$. Take $G = \CR$ and choose a (countable) family $(p_i)_{i \in I} \subset \CR^\times\setminus\CR^*$. If we set $P = |(p_i)_{i \in I}\rangle$, then this monoid acts on $G$ by multiplication, i.e. $\theta_p(g) = pg$ for $g \in G, p \in P$. For $i \neq j$, $\theta_{p_i}$ and $\theta_{p_j}$ are independent if and only if the principal ideals $(p_i)$ and $(p_j)$ in $\CR$ have no common prime ideal. If this is the case, $(G,P,\theta)$ constitutes a commutative irreversible algebraic dynamical system of finite type. Since the number of factors in the (unique) prime ideal factorization of $(g)$ in $\CR$ is finite for every $g \in G$, minimality is once again automatically satisfied.
\end{example}

\noindent As a matter of fact, the construction from Example~\ref{ex:IAD rings of integers} is applicable to Dedekind domains $\CR$. Next, we would like to mention the following example even though, having singly generated $P$, it has nothing to do with independence. The reason is that Joachim Cuntz and Anatoly Vershik observed in \cite{CV}*{Example 2.1.1}, that the C*-algebra $\CO[G,P,\theta]$ associated to this irreversible algebraic dynamical system is isomorphic to $\CO_n$.

\begin{example}\label{ex:IAD for On}
For $n \geq 2$, consider the unilateral shift $\theta_1$ acting on $G = \bigoplus_\N \Z/n\Z$ by $(g_0,g_1,\dots) \mapsto (0,g_0,g_1,\dots)$. Since $\theta_1$ is an injective group endomorphism with $[G:\theta_1(G)] = n$, $(G,P,\theta)$ with $P = |\theta_1\rangle$ is a minimal commutative irreversible algebraic dynamical system of finite type.
\end{example}

\begin{example}\label{ex:ind doesn't give strong ind}
Generalising Example~\ref{ex:IAD for On}, suppose $P$ is as required in condition (B) of Definition~\ref{def:IAD} and let $G_0$ be a countable group. Let us assume that $G_0$ has at least two distinct elements. Then $P$ admits a shift action $\theta$ on $G := \bigoplus_P G_0$ given by $(\theta_p((g_q)_{q \in P}))_r = \chi_{pP}(r)~g_{p^{-1}r} \text{ for all } p,r \in P.$ It is apparent that $\theta_p\theta_q = \theta_q\theta_p$ holds for all $p,q \in P$ and that $\theta_p$ is an injective group endomorphism for all $p \in P$. The index $[G:\theta_p(G)]$ is finite for $p \in P\setminus\{1_P\}$ if and only if $G_0$ is finite and $P$ is singly generated. Indeed, if $p \neq 1_P$, then each element of $\bigoplus_{q \in P\setminus pP} G_0$ yields a distinct left-coset in $G/\theta_p(G)$. Clearly, this group is finite if and only if $G_0$ is finite and $P$ is singly generated. Given relatively prime $p$ and $q$ in $P\setminus\{1_P\}$, $\theta_p(G)\theta_q(G) \neq G$ since $g_{1_P} = 1_{G_0}$ for all $(g_r)_{r \in P} \in \theta_p(G)\theta_q(G)$ as $1_P \notin pP \cup qP$. Thus, unless $P$ is singly generated, $\theta$ does not satisfy the strong independence condition. However, the independence condition is satisfied because $g = (g_r)_{r \in P} \in \theta_p(G)\cap\theta_q(G)$ implies that $g_r \neq 1_{G_0}$ only if $r \in pP \cap qP = pqP$ and thus $g \in \theta_{pq}(G)$.
\end{example}

\noindent We have seen in Example~\ref{ex:ind doesn't give strong ind} that one cannot expect strong independence for irreversible algebraic dynamical systems of infinite type in general. On the other hand, there are some examples where the subgroups in question have infinite index and the endomorphisms are strongly independent:

\begin{example}\label{ex:IAD as sum of IADs}\label{ex:strong ind with inf index}
Given a family $(G^{(i)},P,\theta^{(i)})_{i\in \N}$ of irreversible algebraic dynamical systems, we can consider $G := \bigoplus_{i \in \N} G^{(i)}$. If $P$ acts on $G$ component-wise, i.e. $\theta_p (g_i)_{i \in \N} := (\theta^{(i)}_p(g_i))_{i \in \N}$, then $(G,P,\theta)$ is an irreversible algebraic dynamical system and $[G : \theta_p(G)]$ is infinite unless $p = 1_P$. $G$ is commutative if and only if each $G^{(i)}$ is, and $(G,P,\theta)$ is minimal if and only if each $(G^{(i)},P,\theta^{(i)})$ is minimal. If each $(G^{(i)},P,\theta^{(i)})$ satisfies the strong independence condition, then $\theta$ inherits this property as well.
\end{example}

\noindent As a final example, we provide more general forms of \cite{Vie}*{Example 2.3.9}. These examples are neither commutative irreversible algebraic dynamical systems nor of finite type.

\begin{example}\label{ex:IAD free group}
For $2 \leq n \leq \infty$, let $\IF_n$ be the free group in $n$ generators $(a_k)_{1 \leq k \leq n}$. Fix $1 \leq d \leq n$ and choose for each $1 \leq i \leq d$ an $n$-tuple $(m_{i,k})_{1 \leq k \leq n} \subset \N^\times$ such that 
\begin{enumerate}[1)]
\item there exists $k$ such that $m_{i,k} > 1$ for each $1 \leq i \leq d$, and
\item $m_{i,k}$ and $m_{j,k}$ are relatively prime for all $i \neq j, 1 \leq k \leq n$.
\end{enumerate}
Then $\theta_i(a_k) = a_k^{m_{i,k}}$ defines a group endomorphism of $\IF_n$ for each $1 \leq i \leq d$. Noting that the length of an element of $\IF_n$ in terms of the generators $(a_k)_{1 \leq k \leq n}$ and their inverses is non-decreasing under $\theta_i$, we deduce that $\theta_i$ is injective. It is clear that $\theta_i\theta_j = \theta_j\theta_i$ holds for all $i$ and $j$. For every $1 \leq i \leq d$, the index $[\IF_n:\theta_i(\IF_n)]$ is infinite. Indeed, take $1 \leq k \leq n$ such that $m_{i,k} > 1$ according to 1) and pick $1 \leq \ell \leq n$ with $\ell \neq k$. Then the family $((a_k a_\ell)^j)_{j \geq 1}$ yields pairwise distinct left-cosets in $\IF_n/\theta_i(\IF_n)$ since reduced words of the form $a_k a_\ell b\dots$ with $b \neq a_\ell^{-1}$ are not contained in $\theta_i(\IF_n)$. A similar argument shows that $\theta_i$ and $\theta_j$ are not strongly independent for $i \neq j$: By 1), there are $1 \leq k,\ell \leq n$ such that $m_{i,k} > 1$ and $m_{j,\ell} > 1$. This forces $a_k a_\ell \notin \theta_i(\IF_n)\theta_j(\IF_n)$. Nonetheless, $\theta_i$ and $\theta_j$ are independent due to 2). Thus, $G = \IF_n$ and $P = |(\theta_i)_{1 \leq i \leq d}\rangle$ acting on $G$ in the obvious way constitutes an irreversible algebraic dynamical system which is neither commutative nor of finite type. Minimality of such irreversible algebraic dynamical systems can easily be characterized by:
\begin{enumerate}
\item[3)] For each $1 \leq k \leq n$, there exists $1 \leq i \leq d$ satisfying $m_{i,k} > 1$.
\end{enumerate}
\end{example}

\noindent In addition to the presented spectrum of examples, we would like to mention that there are also examples of minimal, commutative irreversible algebraic dynamical systems of finite type arising from cellular automata, see \cite{Sta2}*{Example 1.19  and Example 1.21}. 

We close this section with two preparatory lemmas which are relevant for the C*-algebraic considerations in Section~3. The first lemma reflects a crucial feature of the independence assumption.

\begin{lemma}\label{lem:IAD intersection formula}
If $(G,P,\theta)$ is an irreversible algebraic dynamical system,
\[g\theta_p(G) \cap h\theta_q(G) = \begin{cases} g\theta_p(h')\theta_{p \vee q}(G) &\text{ if } g^{-1}h \in \theta_p(G)\theta_q(G),\\ \emptyset &\text{ else} \end{cases}\]
holds for all $g,h \in G, p,q \in P$, where $h'$ is uniquely determined by $g\theta_p(h') \in h\theta_q(G)$ up to multiplication from the right by elements from $\theta_{p^{-1}(p \vee q)}(G)$.
\end{lemma}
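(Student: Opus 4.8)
The plan is to reduce the coset formula to a single intersection formula for the images, namely the identity $\theta_p(G) \cap \theta_q(G) = \theta_{p \vee q}(G)$, and then translate between cosets and subgroups. I would first dispose of the ``else'' case: if $x = g\theta_p(a) = h\theta_q(b)$ lies in the intersection, then $g^{-1}h = \theta_p(a)\theta_q(b)^{-1} = \theta_p(a)\theta_q(b^{-1}) \in \theta_p(G)\theta_q(G)$, since $\theta_q$ is a homomorphism and $\theta_q(G)$ is a subgroup. Contrapositively, $g^{-1}h \notin \theta_p(G)\theta_q(G)$ forces the intersection to be empty.

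The crucial step is the sublemma $\theta_p(G) \cap \theta_q(G) = \theta_{p\vee q}(G)$, and here the free abelian structure of $P$ is essential. I would write $d = p \wedge q$ and $p = dp'$, $q = dq'$ with $p' \wedge q' = 1_P$ and $p \vee q = dp'q'$. The inclusion $\supseteq$ is immediate since $p \vee q$ is a common multiple of $p$ and $q$. For $\subseteq$, given $\theta_p(a) = \theta_q(b)$, I factor $\theta_p = \theta_d\theta_{p'}$ and $\theta_q = \theta_d\theta_{q'}$ and cancel $\theta_d$ by injectivity to obtain $\theta_{p'}(a) = \theta_{q'}(b)$. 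Since $p'$ and $q'$ are relatively prime, $\theta_{p'}$ and $\theta_{q'}$ are independent by Definition~\ref{def:IAD}, so their common value lies in $\theta_{p'}\theta_{q'}(G)$; injectivity of $\theta_{p'}$ then pushes $a$ into $\theta_{q'}(G)$, whence $\theta_p(a) \in \theta_p\theta_{q'}(G) = \theta_{p\vee q}(G)$.

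With the sublemma in hand, in the case $g^{-1}h \in \theta_p(G)\theta_q(G)$ I would fix any $h'$ with $g\theta_p(h') \in h\theta_q(G)$; its existence is precisely the coset condition. Writing $q' = p^{-1}(p\vee q)$ and $p' = q^{-1}(p\vee q)$, the factorisations $\theta_{p\vee q}(G) = \theta_p\theta_{q'}(G) \subseteq \theta_p(G)$ and $\theta_{p\vee q}(G) = \theta_q\theta_{p'}(G) \subseteq \theta_q(G)$ give $\supseteq$: the set $g\theta_p(h')\theta_{p\vee q}(G)$ sits inside $g\theta_p(G)$ and, using $g\theta_p(h') \in h\theta_q(G)$, also inside $h\theta_q(G)$. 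For $\subseteq$, given $x = g\theta_p(a) = h\theta_q(b)$ in the intersection, I would compute $(g\theta_p(h'))^{-1}x$ in two ways, obtaining $\theta_p(h'^{-1}a) = \theta_q(c^{-1}b)$ for a suitable $c$; this element lies in $\theta_p(G)\cap\theta_q(G) = \theta_{p\vee q}(G)$, which places $x$ in $g\theta_p(h')\theta_{p\vee q}(G)$.

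For the uniqueness clause, if $h'$ and $h''$ both satisfy the defining condition, the analogous two-way computation of $(g\theta_p(h'))^{-1}(g\theta_p(h''))$ shows $\theta_p(h'^{-1}h'') \in \theta_p(G)\cap\theta_q(G) = \theta_{p\vee q}(G) = \theta_p\theta_{q'}(G)$, and cancelling $\theta_p$ yields $h'^{-1}h'' \in \theta_{q'}(G) = \theta_{p^{-1}(p\vee q)}(G)$. The main obstacle is the intersection sublemma: the remaining steps are routine bookkeeping with cosets of subgroups, whereas the sublemma is the single point where the free abelian structure of $P$ and the independence hypothesis genuinely combine.
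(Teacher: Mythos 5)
Your proof is correct and follows essentially the same route as the paper: derive $g^{-1}h \in \theta_p(G)\theta_q(G)$ from nonemptiness, and control the ambiguity in $h'$ via the identity $\theta_p(G) \cap \theta_q(G) = \theta_{p\vee q}(G)$. The only difference is one of explicitness — the paper invokes that identity tacitly in the step ``we deduce $\theta_p(g_1^{-1}g_3)\in\theta_{p\vee q}(G)$'' and leaves the double inclusion to the reader, whereas you isolate it as a sublemma (correctly combining the lcm--gcd relation in the free abelian monoid $P$ with the independence hypothesis) and verify both inclusions in full.
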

\begin{proof}
If there exist $g_1,g_2 \in G$ such that $g\theta_p(g_1) = h\theta_q(g_2)$, then $g^{-1}h = \theta_p(g_1)\theta_q(g_2^{-1}) \in \theta_p(G)\theta_q(G)$ follows because $G$ is group. Now suppose that $g_3,g_4 \in G$ satisfy $g\theta_p(g_3) = h\theta_q(g_4)$ as well. Since this implies $\theta_p(g_1^{-1}g_3) = \theta_q(g_2^{-1}g_4)$, we deduce $\theta_p(g_1^{-1}g_3) \in \theta_{p \vee q}(G)$. Using injectivity of $\theta_p$, this is equivalent to $g_1^{-1}g_3 \in \theta_{p^{-1}(p \vee q)}(G)$. Therefore, $h' = g_1$ is unique up to right multiplication by elements from $\theta_{p^{-1}(p \vee q)}(G)$.
\end{proof}

\noindent For the proof of Theorem~\ref{thm:O p.i. and simple}, we will need the following auxiliary result, which relies on irreversibility of the dynamical system:

\begin{lemma}\label{lem:believe it or not}
Suppose $(G,P,\theta)$ is an irreversible algebraic dynamical system and we have $n \in \N, g_{i} \in G, p_{i} \in P \setminus \{1_P\}$ for $0 \leq i \leq n$. Then, there exist $g \in g_{0}\theta_{p_{0}}(G), p \in p_{0}P$ satisfying 
\[\begin{array}{c} g\theta_{p}(G) \subset G \setminus \bigcup\limits_{1 \leq i \leq n}\left(g_{i}\bigcap\limits_{m \in \N}{\theta_{p_{i}^{m}}(G)}\right). \end{array}\]
\end{lemma}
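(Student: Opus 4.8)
The plan is to peel off the sets $g_i\bigcap_{m\in\N}\theta_{p_i^m}(G)$ one at a time by an induction on $n$, at each stage replacing the current coset by a strictly smaller sub-coset. Writing $H_i := \bigcap_{m\in\N}\theta_{p_i^m}(G)$, I would isolate the following single-step claim: for any coset $C = g'\theta_{p'}(G)$ and any index $i$, there exist $g''\in C$ and $p''\in p'P$ so that $C' := g''\theta_{p''}(G)$ satisfies $C'\subseteq C$ and $C'\cap g_iH_i = \emptyset$. Granting this, I start from $C_0 = g_0\theta_{p_0}(G)$ and apply the claim successively for $i=1,\dots,n$. Since each new coset lies inside the previous one and disjointness from $g_jH_j$ is inherited by sub-cosets, the final coset $C_n = g\theta_p(G)$ is disjoint from all the $g_iH_i$ while still contained in $g_0\theta_{p_0}(G)$, with $p\in p_0P$, which is exactly the assertion.

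For the single step, if $C\cap g_iH_i=\emptyset$ I keep $C$. Otherwise I fix $x\in C\cap g_iH_i$, so that $g_iH_i = xH_i$ and $C = x\theta_{p'}(G)$, and I look for a sub-coset of the form $x\theta_{p'}(w)\theta_{p''}(G)$ with $p''=p'p_i^m$ that misses $xH_i$. Translating by $x^{-1}$ and applying the injective map $\theta_{p'}$, this reduces to finding $w\in G$ with $w\theta_{p_i^m}(G)\cap\tilde A=\emptyset$, where $\tilde A := \{g\in G : \theta_{p'}(g)\in H_i\}$; such $w$ exists as soon as the product set $\tilde A\,\theta_{p_i^m}(G)$ is a proper subset of $G$. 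The key observation here is that both factors lie inside $D_m := \theta_{p'}^{-1}(\theta_{p_i^m}(G))$: the inclusion $\theta_{p_i^m}(G)\subseteq D_m$ is immediate from $\theta_{p'}\theta_{p_i^m}=\theta_{p_i^m}\theta_{p'}$, and $\tilde A\subseteq D_m$ follows from $H_i\subseteq\theta_{p_i^m}(G)$. Hence $\tilde A\,\theta_{p_i^m}(G)\subseteq D_m$, and it suffices to exhibit one $m$ with $D_m\neq G$, i.e. with $\theta_{p'}(G)\not\subseteq\theta_{p_i^m}(G)$.

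The heart of the matter, and the place where irreversibility enters, is this last point. I would first record the image-intersection formula $\theta_{p'}(G)\cap\theta_{p_i^m}(G)=\theta_{p'\vee p_i^m}(G)$, obtained by factoring out the gcd $d=p'\wedge p_i^m$, writing $p'=d\alpha$ and $p_i^m=d\beta$ with $\alpha\wedge\beta=1_P$, using that relatively prime elements act independently (condition (C) of Definition~\ref{def:IAD}), so $\theta_\alpha(G)\cap\theta_\beta(G)=\theta_{\alpha\beta}(G)$, and then applying the injective map $\theta_d$; alternatively this is contained in Lemma~\ref{lem:IAD intersection formula}. Consequently $\theta_{p'}(G)\subseteq\theta_{p_i^m}(G)$ would force $\theta_{p'}(G)=\theta_{p'\vee p_i^m}(G)=\theta_{p'}(\theta_\gamma(G))$ with $\gamma=p'^{-1}(p'\vee p_i^m)$, whence $\theta_\gamma(G)=G$ by injectivity of $\theta_{p'}$, and then $\gamma=1_P$ because the only surjective endomorphism occurring is $\theta_{1_P}$ (Remark~\ref{rem:IAD first remarks_change}); that is, $p_i^m\mid p'$. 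Since $p_i\neq 1_P$ and $p'$ is a fixed element of the free abelian monoid $P$, this divisibility fails for all large $m$, giving $D_m\neq G$ and completing the single step. The main obstacle is precisely excluding the degenerate possibility $\theta_{p'}(G)\subseteq H_i$, in which $C$ would be entirely swallowed by $g_iH_i$ with no room to shrink; it is exactly irreversibility that rules this out, since in the reversible case $\theta_\gamma(G)=G$ would not force $\gamma=1_P$.
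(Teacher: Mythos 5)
Your proof is correct and follows essentially the same route as the paper's: both arguments iterate over $i$, replace $g_i\bigcap_{m}\theta_{p_i^m}(G)$ by $g_i\theta_{p_i^m}(G)$ for a single large $m$ chosen so that $p_i^m$ fails to divide the current exponent, and then combine the independence-driven intersection formula with irreversibility (Remark~\ref{rem:IAD first remarks_change}) to step into a proper subcoset. The only cosmetic difference is that the paper avoids the single offending coset $g_0\theta_{p_0}(\tilde{g}_1)\theta_{p_0\vee p_1^m}(G)$ directly and takes $p=p_0\vee p_1^m$, whereas you pull everything back through $\theta_{p'}$ and avoid the proper subgroup $D_m$, ending with the multiple $p''=p'p_i^m$.
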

\begin{proof}
We proceed by induction starting with $n = 1$. As $p_{1} \neq e$, we can find $m \in \N$ such that $p_{0} \notin p_{1}^{m}P$. Thus we have $p_{0} \vee p_{1}^{m} \gneqq p_{0}$. By Lemma~\ref{lem:IAD intersection formula}, 
\[\begin{array}{c} \hspace*{-1mm}(g_{0}\theta_{p_{0}}(G)) \cap (g_{1}\theta_{p_{1}^{m}}(G)) = \begin{cases} g_{0}\theta_{p_{0}}(\tilde{g}_{1})\theta_{p_{0} \vee p_{1}^{m}}(G) &\hspace*{-3.5mm} \text{ if } g_{0}^{-1}g_{1} \in \theta_{p_{0}}(G) \theta_{p_{1}^{m}}(G), \\ \emptyset &\hspace*{-3.5mm} \text{ else,} \end{cases} \end{array}\]
where $\tilde{g}_{1}$ is uniquely determined up to $\theta_{p_{0}^{-1}(p_{0} \vee p_{1}^{m})}(G)$. While $g := g_{0}$ works in the second case, we need $g \in (g_{0}\theta_{p_{0}}(G)) \setminus g_{0}\theta_{p_{0}}(\tilde{g}_{1})\theta_{p_{0} \vee p_{1}^{m}}(G)$ in the first case. Note that such a $g$ exists as $p_{0} \vee p_{1}^{m} \gneqq p_{0}$ by the choice of $m$ and we set $p := p_{0} \vee p_{1}^{m}$. 

The induction step from $n$ to $n+1$ is just a verbatim repetition of the first step: Assume that the statement holds for fixed $n$. This means that there exist $h \in g_0\theta_{p_0}(G)$ and $q \in p_0P$ such that 
\[\begin{array}{c} h\theta_{q}(G) \subset G \setminus \bigcup\limits_{1 \leq i \leq n}\left(g_{i}\bigcap\limits_{m \in \N}{\theta_{p_{i}^{m}}(G)}\right). \end{array}\] 
As $p_{n+1} \neq e$, we can find $m \in \N$ such that $q \notin p_{n+1}^{m}P$. In other words, we have $q \vee p_{n+1}^{m} \gneqq q$. Recall that 
\[\begin{array}{ll} \hspace*{-2mm}(h\theta_{q}(G)) \cap (g_{n+1}\theta_{p_{n+1}^{m}}(G)) \vspace*{2mm}\\ &\hspace*{-24mm}= \begin{cases} h\theta_{q}(\tilde{g}_{n+1})\theta_{q \vee p_{n+1}^{m}}(G) & \text{ if } h^{-1}g_{n+1} \notin \theta_q(G) \theta_{p_{n+1}^{m}}(G),\\ \emptyset & \text{ else,}\end{cases} \end{array}\]
where $\tilde{g}_{n+1}$ is uniquely determined up to $\theta_{q^{-1}(q \vee p_{n+1}^{m})}(G)$. In the second case, take $g := h$. For the first case, we choose $g \in (h\theta_{q}(G)) \setminus h\theta_{q}(\tilde{g}_{n+1})\theta_{q \vee p_{n+1}^{m}}(G)$. Note that such a $g$ exists as $q \vee p_{n+1}^{m} \gneqq q$ by the choice of $m$. Finally, let $p := q \vee p_{n+1}^{m}$. Then, it is clear from the construction that we indeed have 
\[\begin{array}{c} g\theta_{p}(G) \subset G \setminus \bigcup\limits_{1 \leq i \leq n+1}\left(g_{i}\bigcap\limits_{m \in \N}{\theta_{p_{i}^{m}}(G)}\right). \end{array}\]
\end{proof}

\section{The dual picture in the commutative case}\label{sec2}
\noindent We will now restrict our focus to commutative irreversible algebraic dynamical systems $(G,P,\theta)$: Injective group endomorphisms $\theta_p$ of a discrete abelian group $G$ correspond to surjective group endomorphisms $\hat{\theta}_p$ of its Pontryagin dual $\hat{G}$, which is a compact abelian group. Moreover, the cardinality of $\ker \hat{\theta}_p$ is equal to the index $[G: \theta_p(G)]$. Via duality, we arrive at a definition of (strong) independence for commuting surjective group endomorphisms $\eta_1$ and $\eta_2$ of an arbitrary group $K$, see Definition~\ref{def:ind surj}, which is consistent with \cite{CV}*{Lemma 5.4}. 

With this notion of independence, we then recast the conditions for an irreversible algebraic dynamical system $(G,P,\theta)$ with commutative $G$ in terms of its dual model $(\hat{G},P,\hat{\theta})$, see Proposition~\ref{prop:CIAD as a topological dynamical system}. This provides a new perspective on irreversible algebraic dynamical systems: If $G$ is commutative and $(G,P,\theta)$  is of finite type, it can be regarded as an irreversible topological dynamical system. More precisely, it arises from surjective local homeomorphisms $\hat{\theta}_p$ of the compact Hausdorff space $\hat{G}$, see \cite{Sta2} for details.

We start with a short review of basic facts about characters on groups, see \cite{DE} for details and further information. Recall that a character $\chi$ on a locally compact abelian group $G$ is a continuous group homomorphism $\chi:G \longrightarrow \T$. The set of characters on $G$ forms a locally compact abelian group $\hat{G}$ when equipped with the topology of uniform convergence on compact subsets of $G$. Pontryagin duality states that $\hat{\hat{G}} \cong G$. For this result, we interpret $g \in G$ as a character on $\hat{G}$ via $g(\chi):= \chi(g)$. If $G$ is discrete, then $\hat{G}$ is compact and vice versa.

\begin{definition}\label{def:annihilator}
Let $G$ be a locally compact abelian group. For a subset $H \subset G$, the \emph{annihilator} of $H$ is given by $H^\perp := \{~\chi \in \hat{G}~|~\chi|_H = 1~\}$.
\end{definition}

\begin{remark}\label{rem:annihilator} 
The annihilator is always a closed subgroup of $\hat{G}$. A useful fact about annihilators of subgroups $H$ is that we have $\hat{H} \cong \hat{G}/{H^\perp}$. Additionally, one can show that $(H^\perp)^\perp$ is the smallest closed subgroup of $G$ containing $H$. So if $H \subset G$ is a closed subgroup, then $(H^\perp)^\perp = H$.  
\end{remark}


\begin{lemma}\label{lem:dual picture: converting endomorphisms I}
Let $G$ be a locally compact abelian group and $\eta:G \longrightarrow G$ a group endomorphism. Then $\hat{\eta}(\chi)(g) := \chi{\circ}\eta(g)$ defines a group endomorphism $\hat{\eta}: \hat{G} \longrightarrow \hat{G}$ which is continuous if and only if $\eta$ is and we have: 
\begin{enumerate}[i)]
\item $\hat{\hat{\eta}} = \eta$.
\item $\eta(G)^\perp = \ker\hat{\eta}$.
\item $\hat{\eta}(\hat{G}) \subset \hat{G}$ is dense if and only if $\eta$ is injective.
\item $\widehat{\ker \hat{\eta}} \cong \coker \eta$ if $\eta(G)$ is closed.
\end{enumerate}
\end{lemma}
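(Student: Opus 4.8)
The plan is to dispose of the preliminary assertion (well-definedness, the homomorphism property, and the continuity equivalence) first, and then settle the four numbered items by unwinding definitions and feeding everything into the annihilator calculus recorded in Remark~\ref{rem:annihilator}. For the preliminary part, note that for $\chi \in \hat{G}$ the composite $\chi \circ \eta$ is again a homomorphism into $\T$, continuous whenever $\eta$ is, and that $\hat{\eta}$ respects the group law because $(\chi_1\chi_2)\circ\eta = (\chi_1\circ\eta)(\chi_2\circ\eta)$. For the continuity equivalence I would prove one implication by the standard functoriality of the dual (a continuous $\eta$ induces a $\hat{\eta}$ that is continuous for the topology of uniform convergence on compacta) and obtain the converse by applying that same fact to item (i): if $\hat{\eta}$ is continuous, then so is $\hat{\hat{\eta}}$, and $\hat{\hat{\eta}} = \eta$ under the identification $\hat{\hat{G}} \cong G$.

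Items (i) and (ii) are then direct computations. For (i), using $g(\chi) := \chi(g)$ one evaluates
\[
\hat{\hat{\eta}}(g)(\chi) = g(\hat{\eta}(\chi)) = \hat{\eta}(\chi)(g) = \chi(\eta(g)) = \eta(g)(\chi),
\]
so $\hat{\hat{\eta}} = \eta$. For (ii), $\chi \in \ker\hat{\eta}$ means $\chi\circ\eta \equiv 1$, i.e.\ $\chi$ kills every $\eta(g)$, which is precisely the statement $\chi \in \eta(G)^\perp$.

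For (iii) the first step is to compute the annihilator of the image. Working inside $\hat{\hat{G}} = G$, an element $g$ lies in $\hat{\eta}(\hat{G})^\perp$ exactly when $\chi(\eta(g)) = 1$ for all $\chi \in \hat{G}$; since characters separate the points of $G$ (this is the injectivity built into $\hat{\hat{G}}\cong G$), this forces $\eta(g) = 1_G$, whence $\hat{\eta}(\hat{G})^\perp = \ker\eta$. By Remark~\ref{rem:annihilator} the closure of $\hat{\eta}(\hat{G})$ equals $(\hat{\eta}(\hat{G})^\perp)^\perp = (\ker\eta)^\perp$, and this is all of $\hat{G}$ if and only if $\ker\eta = \{1_G\}$, i.e.\ if and only if $\eta$ is injective. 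For (iv), I would combine (ii) with the subgroup duality: by (ii) the group $\ker\hat{\eta} = \eta(G)^\perp$ is a closed subgroup of $\hat{G}$, and applying $\hat{H} \cong \hat{G}/H^\perp$ from Remark~\ref{rem:annihilator} to the dual group with $H = \eta(G)^\perp$ gives $\widehat{\eta(G)^\perp} \cong \hat{\hat{G}}/(\eta(G)^\perp)^\perp = G/(\eta(G)^\perp)^\perp$. The hypothesis that $\eta(G)$ is closed is exactly what lets me use $(\eta(G)^\perp)^\perp = \eta(G)$, so the quotient collapses to $G/\eta(G) = \coker\eta$.

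The only genuinely delicate point, and the step I expect to require the most care, is the bookkeeping in (iv): one must apply the duality $\hat{H}\cong\hat{G}/H^\perp$ with the roles of $G$ and $\hat{G}$ interchanged, keep track of the canonical identification $\hat{\hat{G}}\cong G$ so that the outer annihilator is taken back down in $G$, and recognise that the closedness of $\eta(G)$ is precisely the hypothesis that makes $(\eta(G)^\perp)^\perp$ equal to $\eta(G)$ rather than merely its closure. Everything else reduces to separation of characters and the formal properties of $\perp$.
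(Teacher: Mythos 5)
Your proposal is correct. The paper itself gives no proof of this lemma --- it is stated as a standard consequence of Pontryagin duality (the reader is referred to \cite{DE} just before Definition~\ref{def:annihilator}) --- and your argument is exactly the standard one: (i) and (ii) by direct evaluation, (iii) via $\hat{\eta}(\hat{G})^\perp = \ker\eta$ together with $(H^\perp)^\perp = \overline{H}$ and separation of points by characters, and (iv) via $\widehat{H} \cong \hat{G}/H^\perp$ applied to $H = \eta(G)^\perp \subset \hat{G}$ with the closedness of $\eta(G)$ giving $(\eta(G)^\perp)^\perp = \eta(G)$. The only point one could quibble with is the continuity equivalence: for $\hat{\eta}$ to land in $\hat{G}$ at all one needs $\chi\circ\eta$ continuous for every $\chi$, so the ``only if'' direction is really built into the hypothesis that $\hat{\eta}$ is a well-defined endomorphism of $\hat{G}$; but this is an imprecision of the lemma's own phrasing (harmless here, since in all applications $G$ is discrete), not a gap in your argument.
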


\noindent In particular, if $G$ is discrete, then ii) states that $\hat{\eta}:\hat{G} \longrightarrow \hat{G}$ is surjective if and only if $\eta:G \longrightarrow G$ is injective. Moreover, $\eta(G)$ is always closed. If, in addition, $\coker \eta$ is finite, then $\ker \hat{\eta} \cong \widehat{\ker\hat{\eta}} \cong \coker \eta$ follows from iv). 

\begin{lemma}\label{lem:dual picture: converting endomorphisms II}
If $G$ is a locally compact abelian group and $H_1,H_2 \subset G$ are subgroups, then: 
\begin{enumerate}[i)]
\item $(H_1 \cdot H_2)^\perp = H_1^\perp \cap H_2^\perp$. 
\item $(H_1 \cap H_2)^\perp = H_1^\perp \cdot H_2^\perp$ holds if $H_1$ and $H_2$ are closed.
\end{enumerate}
\end{lemma}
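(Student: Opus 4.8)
The plan is to treat the two statements separately: I would prove (i) by a direct computation that uses no topological hypothesis, and then derive (ii) from (i) by applying Pontryagin duality.

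For (i), the first observation is that $H_1 \subset H_1 \cdot H_2$ and $H_2 \subset H_1 \cdot H_2$, taking the neutral element as the missing factor. Hence any $\chi$ that is trivial on $H_1 \cdot H_2$ is trivial on each $H_i$, which gives the inclusion $(H_1 \cdot H_2)^\perp \subset H_1^\perp \cap H_2^\perp$. For the reverse inclusion, if $\chi \in H_1^\perp \cap H_2^\perp$ then, since $\chi$ is a homomorphism into $\T$, one has $\chi(h_1 h_2) = \chi(h_1)\chi(h_2) = 1$ for all $h_i \in H_i$, so $\chi \in (H_1 \cdot H_2)^\perp$. No closedness of the $H_i$ is needed for this part.

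For (ii), I would run (i) inside the dual group $\hat{G}$. The annihilators $H_1^\perp, H_2^\perp$ are closed subgroups of $\hat{G}$ by Remark~\ref{rem:annihilator}, so applying (i) to them, with annihilators now taken in $\hat{\hat{G}} = G$, yields
\[(H_1^\perp \cdot H_2^\perp)^\perp = (H_1^\perp)^\perp \cap (H_2^\perp)^\perp.\]
Because $H_1$ and $H_2$ are closed, the biduality identity $(H_i^\perp)^\perp = H_i$ from Remark~\ref{rem:annihilator} applies and gives $(H_1^\perp \cdot H_2^\perp)^\perp = H_1 \cap H_2$. Taking annihilators once more,
\[(H_1 \cap H_2)^\perp = \bigl((H_1^\perp \cdot H_2^\perp)^\perp\bigr)^\perp,\]
and the right-hand side is, again by Remark~\ref{rem:annihilator}, the smallest closed subgroup of $\hat{G}$ containing $H_1^\perp \cdot H_2^\perp$, that is, its closure $\overline{H_1^\perp \cdot H_2^\perp}$.

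The main obstacle is exactly the gap between $H_1^\perp \cdot H_2^\perp$ and its closure: the product of two closed subgroups of a locally compact abelian group need not be closed, so in full generality the duality argument only produces $(H_1 \cap H_2)^\perp = \overline{H_1^\perp \cdot H_2^\perp}$. To close this gap and obtain the stated equality I would pass to the setting in which the lemma is applied, where $G$ is discrete and hence $\hat{G}$ is compact. In that case $H_1^\perp$ and $H_2^\perp$ are compact, the multiplication map $H_1^\perp \times H_2^\perp \to \hat{G}$ has compact, hence closed, image, and therefore $H_1^\perp \cdot H_2^\perp$ is already closed. Under this compactness the closure is redundant and $(H_1 \cap H_2)^\perp = H_1^\perp \cdot H_2^\perp$ follows.
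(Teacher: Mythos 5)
The paper states Lemma~\ref{lem:dual picture: converting endomorphisms II} without proof, so there is no argument of the author's to compare yours against; judged on its own, your proof is correct and is the natural one. Part (i) is a routine verification exactly as you give it, with no topological hypothesis needed. For part (ii), your duality argument is sound, and the point you raise about the closure is genuine rather than pedantic: since annihilators are always closed, the identity $(H_1 \cap H_2)^\perp = H_1^\perp \cdot H_2^\perp$ can only hold when $H_1^\perp \cdot H_2^\perp$ is itself closed, and for a general locally compact abelian group the product of two closed subgroups need not be closed (for instance, in $\widehat{G} = \R \times \Z_p$ the closed subgroups $\Z(1,1)$ and $\R \times \{0\}$ have product $\R \times \Z$, which is dense in $\R\times\Z_p$ but not closed; pulling this back through duality gives closed $H_1, H_2$ violating the stated equality). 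So the lemma as printed is slightly too strong, and the honest general statement is $(H_1 \cap H_2)^\perp = \overline{H_1^\perp \cdot H_2^\perp}$. Your repair is exactly the right one: the only place part (ii) is invoked is Proposition~\ref{prop:ind corres inj surj}, where $G$ is discrete, hence $\widehat{G}$ is compact, the annihilators are compact, and the continuous image of $H_1^\perp \times H_2^\perp$ under multiplication is compact and therefore closed, so the closure is redundant there. In short: proof accepted, and your observation would improve the statement of the lemma (either add the closure or add the hypothesis that $G$ is discrete).
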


\begin{proposition}\label{prop:ind corres inj surj}
Let $G$ be a discrete abelian group and $\theta_1,\theta_2$ be commuting, injective endomorphisms of $G$. Then the following statements hold:
\begin{enumerate}[i)]
\item $\theta_1$ and $\theta_2$ are strongly independent if and only if $\ker\hat{\theta}_1$ and $\ker\hat{\theta}_2$ intersect trivially.
\item $\theta_1$ and $\theta_2$ are independent if and only if $\ker\hat{\theta}_1 \cdot \ker\hat{\theta}_2 = \ker\widehat{\theta_1\theta_2}$.
\end{enumerate}
\end{proposition}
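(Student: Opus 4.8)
The plan is to transport both independence conditions from $G$ to $\hat{G}$ through the annihilator correspondence, exploiting that for a \emph{discrete} group $G$ every subgroup is automatically closed. This is the crucial structural fact: it guarantees both that Lemma~\ref{lem:dual picture: converting endomorphisms II}~ii) applies without any extra hypothesis and that the annihilator is involutive on subgroups, i.e. $(H^\perp)^\perp = H$ for every subgroup $H \subseteq G$ by Remark~\ref{rem:annihilator}. Consequently, passing to annihilators converts each algebraic identity of subgroups of $G$ into an \emph{equivalent} identity of closed subgroups of $\hat{G}$, so all I need to verify is that the two sides of each independence condition dualize correctly.

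For i) I would start from the strong independence condition $\theta_1(G)\theta_2(G) = G$, that is, condition (i) of Proposition~\ref{prop:IAD ind cond} used in Definition~\ref{def:ind}. Since $G$ is abelian, $\theta_1(G)\theta_2(G)$ is a subgroup, hence closed. Taking annihilators and combining Lemma~\ref{lem:dual picture: converting endomorphisms II}~i) with Lemma~\ref{lem:dual picture: converting endomorphisms I}~ii) yields
\[
(\theta_1(G)\theta_2(G))^\perp = \theta_1(G)^\perp \cap \theta_2(G)^\perp = \ker\hat{\theta}_1 \cap \ker\hat{\theta}_2,
\]
while $G^\perp = \{1_{\hat{G}}\}$. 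Thus strong independence forces $\ker\hat{\theta}_1 \cap \ker\hat{\theta}_2 = \{1_{\hat{G}}\}$; for the converse I would re-apply the annihilator and use involutivity together with $\{1_{\hat{G}}\}^\perp = G$ to recover $\theta_1(G)\theta_2(G) = ((\theta_1(G)\theta_2(G))^\perp)^\perp = G$.

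For ii) the same recipe applies to the independence condition $\theta_1(G) \cap \theta_2(G) = \theta_1\theta_2(G)$, namely condition (iii). Both sides are subgroups of the discrete group $G$ and hence closed, so Lemma~\ref{lem:dual picture: converting endomorphisms II}~ii) gives
\[
(\theta_1(G) \cap \theta_2(G))^\perp = \theta_1(G)^\perp \cdot \theta_2(G)^\perp = \ker\hat{\theta}_1 \cdot \ker\hat{\theta}_2,
\]
whereas Lemma~\ref{lem:dual picture: converting endomorphisms I}~ii) applied to the injective endomorphism $\theta_1\theta_2$ gives $(\theta_1\theta_2(G))^\perp = \ker\widehat{\theta_1\theta_2}$. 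Hence independence is equivalent to $\ker\hat{\theta}_1\cdot\ker\hat{\theta}_2 = \ker\widehat{\theta_1\theta_2}$, and the reverse direction again follows by re-applying the annihilator to the two closed subgroups involved.

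I do not expect a serious obstacle: the entire content of the proposition is carried by the two dualization lemmas. The only points that genuinely require care are (a) checking that each subgroup appearing is closed, so that the annihilator is honestly involutive and Lemma~\ref{lem:dual picture: converting endomorphisms II}~ii) is legitimately available, and (b) being explicit that taking annihilators produces an equivalence rather than a single implication, which is exactly what $(H^\perp)^\perp = H$ supplies. I would also verify the boundary identifications $G^\perp = \{1_{\hat{G}}\}$ and $\{1_{\hat{G}}\}^\perp = G$, since these are what translate the statement ``$\,=G\,$'' into ``intersect trivially'' in part i).
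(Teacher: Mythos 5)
Your proposal is correct and follows essentially the same route as the paper: both compute $(\theta_1(G)\theta_2(G))^\perp = \ker\hat{\theta}_1 \cap \ker\hat{\theta}_2$ via Lemma~\ref{lem:dual picture: converting endomorphisms II}~i) and Lemma~\ref{lem:dual picture: converting endomorphisms I}~ii), and $(\theta_1(G)\cap\theta_2(G))^\perp = \ker\hat{\theta}_1\cdot\ker\hat{\theta}_2$ together with $(\theta_1\theta_2(G))^\perp = \ker\widehat{\theta_1\theta_2}$ via Lemma~\ref{lem:dual picture: converting endomorphisms II}~ii) and Lemma~\ref{lem:dual picture: converting endomorphisms I}~ii). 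Your added care about closedness of subgroups of the discrete group $G$ and the involutivity $(H^\perp)^\perp = H$ only makes explicit the "is equivalent to" step that the paper leaves implicit.
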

\begin{proof}
For strong independence, we compute 
\[(\theta_1(G)\theta_2(G))^\perp \stackrel{\ref{lem:dual picture: converting endomorphisms II}~i)}{=} \theta_1(G)^\perp \cap \theta_2(G)^\perp \stackrel{\ref{lem:dual picture: converting endomorphisms I}~ii)}{=} \ker\hat{\theta}_1 \cap \ker\hat{\theta}_2.\]
Therefore, $\theta_1(G)\theta_2(G) = G$ is equivalent to $\ker\hat{\theta}_1 \cap \ker\hat{\theta}_2 = \{1_{\hat{G}}\}$. Similarly, we get
\[(\theta_1(G) \cap \theta_2(G))^\perp \stackrel{\ref{lem:dual picture: converting endomorphisms II}~ii)}{=} \theta_1(G)^\perp \cdot \theta_2(G)^\perp \stackrel{\ref{lem:dual picture: converting endomorphisms I}~ii)}{=} \ker\hat{\theta}_1 \cdot \ker\hat{\theta}_2.\]
On the other hand, Lemma~\ref{lem:dual picture: converting endomorphisms I}~ii) gives $\ker\widehat{\theta_1\theta_2} = \theta_1\theta_2(G)^\perp$. 
\end{proof}

\noindent This motivates the following definition in analogy to Definition~\ref{def:ind}:

\begin{definition}\label{def:ind surj}
Two commuting, surjective group endomorphisms $\eta_{1}$ and $\eta_2$ of a group $K$ are said to be \emph{strongly independent}, if $\ker\eta_{1}$ and $\ker\eta_{2}$ intersect trivially. $\eta_{1}$ and $\eta_2$ are called \emph{independent}, if $\ker\eta_1 \cdot \ker\eta_2 = \ker\eta_1\eta_2$. 
\end{definition}

\noindent It is clear that we have an equivalence between the statements:
\begin{enumerate}[(i)]\label{ind corres inj surj}
\item $\eta_1$ and $\eta_2$ are strongly independent.
\item $\eta_1$ is an injective group endomorphism of $\ker\eta_2$.
\item[(ii')] $\eta_2$ is an injective group endomorphism of $\ker\eta_1$.
\end{enumerate}
If both $\ker\eta_1$ and $\ker\eta_2$ are finite, then strong independence and independence coincide. Therefore, this definition is consistent with \cite{CV}*{Definition 5.5}, where the case of endomorphisms (of a compact abelian group $K$) with finite kernels is treated. Note that there is no conflict with (strong) independence for injective group endomorphisms, see Definition~\ref{def:ind}, as all these conditions are trivially satisfied by group automorphisms. 

With the observations from Lemma~\ref{lem:dual picture: converting endomorphisms I} and Lemma~\ref{lem:dual picture: converting endomorphisms II} at hands, we can now translate the setup from Definition~\ref{def:IAD} for commutative irreversible algebraic dynamical systems:

\begin{proposition}\label{prop:CIAD as a topological dynamical system}
For a discrete abelian group $G$, a triple $(G,P,\theta)$ is a commutative irreversible algebraic dynamical system if and only if 
\begin{enumerate}[(A)]
\item $\hat{G}$ is a compact abelian group,
\item $P$ is a countably generated, free, abelian monoid (with unit $1_P$), and 
\item $\hat{\theta}$ is an action of $P$ on $\hat{G}$ by surjective group endomorphisms with the property that $\hat{\theta}_p$ and $\hat{\theta}_q$ are independent if and only if $p$ and $q$ are relatively prime in $P$.
\end{enumerate}
$(G,P,\theta)$ is minimal if and only if $\bigcup_{p \in P}{\ker\hat{\theta}_{p}} \subset \hat{G}$ is dense.
It is of finite (infinite) type if and only if $\ker\hat{\theta}_p$ is (infinite) finite for all $p \in P$ ($p \neq 1_P$).
\end{proposition}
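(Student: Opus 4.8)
The plan is to read the statement as a line-by-line dictionary translation under Pontryagin duality, using Lemma~\ref{lem:dual picture: converting endomorphisms I} and Lemma~\ref{lem:dual picture: converting endomorphisms II} as the conversion rules and Proposition~\ref{prop:ind corres inj surj} for the independence clause. First I would dispose of conditions (A) and (B): duality is a bijective correspondence between countably infinite discrete abelian groups $G$ and (infinite, metrisable) compact abelian groups $\hat G$, with $\hat{\hat G}\cong G$, so (A) for $(G,P,\theta)$ is equivalent to (A) for $(\hat G,P,\hat\theta)$; condition (B) concerns only $P$ and is literally the same on both sides. For condition (C) I would send $(\theta_p)_{p\in P}$ to $(\hat\theta_p)_{p\in P}$. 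The only point needing care here is that $\widehat{(\cdot)}$ is contravariant, so a priori $\widehat{\theta_p\theta_q}=\hat\theta_q\hat\theta_p$; but since $P$ is abelian this equals $\hat\theta_{pq}$, so $p\mapsto\hat\theta_p$ is again a genuine $P$-action. By Lemma~\ref{lem:dual picture: converting endomorphisms I}~ii)/iii) (in the discrete case) each $\hat\theta_p$ is a surjective group endomorphism precisely because each $\theta_p$ is injective, and $\hat{\hat\theta}_p=\theta_p$ guarantees the correspondence is bijective.

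The independence clause is then handled by Proposition~\ref{prop:ind corres inj surj}~ii): $\theta_p$ and $\theta_q$ are independent if and only if $\ker\hat\theta_p\cdot\ker\hat\theta_q=\ker\widehat{\theta_p\theta_q}$, which is exactly the defining condition (Definition~\ref{def:ind surj}) for $\hat\theta_p$ and $\hat\theta_q$ to be independent. Hence the biconditional ``$\theta_p,\theta_q$ independent $\iff$ $p,q$ relatively prime'' transports verbatim to the dual side, completing the translation of (C). For the type dichotomy I would invoke the identity $\coker\theta_p\cong\widehat{\ker\hat\theta_p}$ from Lemma~\ref{lem:dual picture: converting endomorphisms I}~iv) (valid since $\theta_p(G)$ is closed for discrete $G$): as $\ker\hat\theta_p$ is a compact abelian group, it is finite if and only if its discrete dual $\widehat{\ker\hat\theta_p}$ is, and then the two have equal cardinality $[G:\theta_p(G)]$. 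Thus $[G:\theta_p(G)]$ is finite (infinite) if and only if $\ker\hat\theta_p$ is finite (infinite), which, quantified over all $p$ (respectively all $p\neq 1_P$), yields the finite/infinite type statements.

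The remaining, and most delicate, point is minimality. Here I would compute the annihilator of $\bigcap_{p\in P}\theta_p(G)$. Since each $\theta_p(G)$ is closed, the infinite-intersection analogue of Lemma~\ref{lem:dual picture: converting endomorphisms II}~ii) together with Lemma~\ref{lem:dual picture: converting endomorphisms I}~ii) gives $\big(\bigcap_{p}\theta_p(G)\big)^\perp=\overline{\langle\bigcup_p\theta_p(G)^\perp\rangle}=\overline{\langle\bigcup_p\ker\hat\theta_p\rangle}$, the smallest closed subgroup containing $\bigcup_p\ker\hat\theta_p$. The key simplification is that this union is already a subgroup: $\ker\hat\theta_p\subseteq\ker\hat\theta_{pq}$ for all $p,q$, and $P$ is directed under divisibility, so $\bigcup_p\ker\hat\theta_p$ is a directed union of subgroups. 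Consequently $\big(\bigcap_p\theta_p(G)\big)^\perp=\overline{\bigcup_p\ker\hat\theta_p}$. Finally, since $\bigcap_p\theta_p(G)$ is closed, the double-annihilator identity (Remark~\ref{rem:annihilator}) shows $\bigcap_p\theta_p(G)=\{1_G\}$ if and only if its annihilator is all of $\hat G$, i.e.\ if and only if $\bigcup_p\ker\hat\theta_p$ is dense in $\hat G$; this is exactly minimality. The main obstacle I anticipate is justifying the annihilator formula for the infinite intersection and the passage from ``closed subgroup generated by'' to ``dense'', which is precisely where the directedness of $P$ and the nesting of the kernels do the real work.
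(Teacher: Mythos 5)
Your proposal is correct and follows essentially the same route as the paper: conditions (A)--(C) and the type dichotomy are translated via Lemma~\ref{lem:dual picture: converting endomorphisms I} and Proposition~\ref{prop:ind corres inj surj} exactly as in the text, and your minimality argument (computing the annihilator of $\bigcap_p\theta_p(G)$ as the closure of the directed union $\bigcup_p\ker\hat\theta_p$, then applying the double-annihilator identity) is just the paper's argument run in the opposite direction, resting on the same two ingredients of directedness of $P$ and the nesting of the kernels.
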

\begin{proof}
Conditions (A) and (B) of this characterization follow directly from Lemma~\ref{lem:dual picture: converting endomorphisms I}. Moreover, for any $p \in P$, the equation $(\ker\hat{\theta}_p)^\perp = \im \theta_p$ yields an isomorphism between $\coker \theta_p$ and the Pontryagin dual of $\ker\hat{\theta}_p$. Combining Lemma~\ref{lem:dual picture: converting endomorphisms I}~iii) and Proposition~\ref{prop:ind corres inj surj} yields (C). Note that we have $\theta_q(G) \subset \theta_p(G)$ and, correspondingly, $\ker\hat{\theta}_p \subset \ker\hat{\theta}_q$ whenever $q \in pP$. Since $P$ is directed, Lemma~\ref{lem:dual picture: converting endomorphisms II}~i) and Lemma~\ref{lem:dual picture: converting endomorphisms I}~ii) yield the equivalence between minimality of $(G,P,\theta)$ and $\bigcup_{p \in P}{\ker\hat{\theta}_{p}}$ being dense in $\hat{G}$.  
For the last claim, we recall that a locally compact abelian group is finite if and only if its dual group is finite. Thus $\ker\hat{\theta}_p$ is finite if and only if $\coker\theta_p$ is finite. 
\end{proof}

\noindent We will now revisit some of the examples from Section~1 to present their dual models:

\begin{examples}\label{ex:dual IAD standard}
The following list corresponds to the one in Example~\ref{ex:IAD standard}.
\begin{enumerate}[(a)]
\item For $G = \Z$, a family of relatively prime numbers $(p_i)_{i \in I} \subset \Z^\times{\setminus}\Z^*$ generates a monoid $P = |(p_i)_{i \in I}\rangle \subset \Z^\times$ which acts by $\theta_{p_i}(g) = p_ig$. In this case, $\hat{G} = \IT$ and $\hat{\theta}_p(t) = t^p$ for all $t \in \IT$ and $p \in P$.
\item For $I\subset\N, 0 \in I$, let $q, (p_i)_{i \in I} \subset \Z^\times\setminus \Z^*$ be relatively prime numbers and set $P = |(p_i)_{i \in I}\rangle$ as well as $G = \Z[1/q] = \varinjlim \Z$ with connecting maps given by multiplication with $q$. Then this constitutes a minimal commutative irreversible algebraic dynamical system of finite type, see Example~\ref{ex:IAD standard}~(b). Then $\hat{G}$ is the solenoid $\Z_q = \varprojlim \Z/q^k\Z$, on which $\hat{\theta}_p$ is given by multiplication with $p$.
\item For a finite field $\IK$, let $p_i \in \IK[T], i \in I$ (for an index set $I$) be polynomials in $G=\IK[T]$ with the property that $(p_i) \cap (p_j) = (p_ip_j)$ holds for all $i \neq j$. Then the action $\theta$ of $P := |(p_i)_{i \in I}\rangle$ given by multiplication with the polynomial itself yields a commutative irreversible algebraic dynamical system of finite type, see Example~\ref{ex:IAD standard}~(c). Then $\hat{G}$ is the ring of formal power series $\IK[[T]]$ over $\IK$, compare \cite{CV}*{Example 2.1.4}, and $\hat{\theta}_p$ is given by multiplication with $p$ in $\IK[[T]]$.   
\end{enumerate}
\end{examples}

\begin{example}\label{ex:dual IAD integer matrices}
Recall that, in Example~\ref{ex:IAD integer matrices}, we considered $G=\Z^d$ for some $d \geq 1$, a family of pairwise commuting matrices $(T_i)_{i \in I} \subset M_d(\Z) \cap Gl_d(\IQ)$ satisfying $|\det T_i| > 1$ for all $i \in I$ and set $P = |(T_i)_{i \in I}\rangle$ with $\theta_{T_i}(g) = T_ig$. In this case, we have $\hat{G} = \IT^d$ and the endomorphism $\hat{\theta}_p$ is given by the matrix corresponding to $\theta_p$ interpreted as an endomorphism of $\R^d/\Z^d \cong \IT^d$.
\end{example}


\begin{example}\label{ex:dual IAD for On}
The dual model for the unilateral shift on $G= \bigoplus_\N \Z/n\Z$ for $n \geq 2$ from Example~\ref{ex:IAD for On} is given by the shift $(x_k)_{k \in \N} \mapsto (x_{k+1})_{k \in \N}$ on $\hat{G} = (\Z/n\Z)^\N$. The discussion for Example~\ref{ex:ind doesn't give strong ind} with the restriction that $G_0$ be abelian is analogous, where $\N$ is replaced by $P$ and $\Z/n\Z$ by $G_0$.
\end{example}

\begin{example}\label{ex:dual CIAD as sum of CIADs}
In the situation of Example~\ref{ex:IAD as sum of IADs}, where we will now require that $(G_n,P,\theta^{(i)})_{i\in \N}$ be a family of commutative irreversible algebraic dynamical systems, $G = \bigoplus_{i \in \N} G_i$ turns into $\hat{G} = \prod_{i \in \N} \hat{G}_i$. For each $p \in P$, the group endomorphism $\hat{\theta}_p$ is given by applying $\theta^{(i)}_p$ to the $i$-th component of $\hat{G}$. $\ker\hat{\theta}_p$ is infinite for all $p \in P\setminus\{1_P\}$. If each $\theta^{(i)}$ satisfies the strong independence condition from Definition~\ref{def:ind}, $\hat{\theta}$ satisfies the strong independence condition from Definition~\ref{def:ind surj} due to Proposition~\ref{prop:ind corres inj surj}.
\end{example}

\section{Structure of the associated C*-algebras}\label{sec3}
\noindent In this section, we associate a universal C*-algebra $\CO[G,P,\theta]$ to every irreversible algebraic dynamical system $(G,P,\theta)$. The general approach is inspired by the methods of \cite{CV} for the case of a single group endomorphism with finite cokernel of a discrete abelian group. Partly, these ideas can even be traced back to \cite{Cun}. Note however, that we are going to use a different spanning family than the one used in \cite{CV}.

We will examine structural properties of $\CO[G,P,\theta]$ as well as of two nested subalgebras: the core $\CF$ and the diagonal $\CD$. In Lemma~\ref{lem:IAD spec D as a completion of G}, a description of the spectrum $G_\theta$ of the diagonal $\CD$ is provided, which allows us to regard $G_\theta$ as a completion of $G$ with respect to $\theta$ in the case where $(G,P,\theta)$ is minimal, compare \cite{CV}*{Lemma 2.4}. 

Based on the description of $G_\theta$, the action $\hat{\tau}$ of $G$ on $G_\theta$ coming from $\tau_g (e_{h,p}) = e_{gh,p}$ is shown to be always minimal. Moreover, we prove that topological freeness of $\hat{\tau}$ corresponds to minimality of $(G,P,\theta)$, see Proposition~\ref{prop:IAD min iff tau min+topfree}. As an immediate consequence we deduce that $\CD \rtimes_\tau G$ is simple if and only if $(G,P,\theta)$ is minimal and $\hat{\tau}$ is amenable, see Corollary~\ref{cor:D rtimes G simple iff IAD minimal}. This crossed product is actually isomorphic to $\CF$, see Corollary~\ref{cor:F cong D rtimes G-general IAD}. 

We remark that our strategy of proof differs from the one of \cite{CV} because we start by establishing an isomorphism between $\CO[G,P,\theta]$ and $\CD \rtimes (G\rtimes_\theta P)$, compare Proposition~\ref{prop:O cong D rtimes GxP} and \cite{CV}*{Lemma 2.5 and Theorem 2.6}. By Theorem~\ref{thm:cr prod by sd prod as it cr prod}, we deduce that $\CO[G,P,\theta]$ is isomorphic to the semigroup crossed product $\CF \rtimes P$. So we get 
\[\CO[G,P,\theta] \cong \CD \rtimes (G\rtimes_\theta P) \cong \CF \rtimes P.\]
One advantage of this strategy is that we are able to establish these isomorphisms in greater generality, i.e. without minimality of $(G,P,\theta)$ and amenability of $\hat{\tau}$ which would give simplicity of both $\CF$ and $\CO[G,P,\theta]$. 

Similar to \cite{CV}, we conclude that, whenever $(G,P,\theta)$ is minimal and the $G$-action $\hat{\tau}$ on $G_\theta$ is amenable, the C*-algebra $\CO[G,P,\theta]$ is a unital UCT Kirchberg algebra, see Theorem~\ref{thm:O p.i. and simple} and Corollary~\ref{cor:UCT Kirchberg algebra}. Thus $\CO[G,P,\theta]$ is classified by its K-theory in this case due to the important classification results of Christopher Phillips and Eberhard Kirchberg, see \cite{Kir}.\vspace*{4mm}

\noindent Throughout this section, $(G,P,\theta)$ will represent an irreversible algebraic dynamical system unless specified otherwise. Let $(\xi_g)_{g \in G}$ denote the canonical orthonormal basis of $\ell^2(G)$. For $g \in G$ and $p \in P$, define operators $U_g$ and $S_p$ on $\ell^2(G)$ by $U_g(\xi_{g'}) := \xi_{gg'}$ and $S_p(\xi_{g'}) := \xi_{\theta_p(g')}$ for $g' \in G$. Then $(U_g)_{g \in G}$ is a unitary representation of the group $G$ and $S_p^*(\xi_{g'}) = \chi_{\theta_p(G)}(g') \xi_{\theta_p^{-1}(g')}$ for $g' \in G$, so $(S_p)_{p \in P}$ is a representation of the semigroup $P$ by isometries. Furthermore, these operators satisfy 
\[\begin{array}{lrclcl}
(\text{CNP }1) & S_{p}U_{g}(\xi_{g'}) &=& \xi_{\theta_p(gg')} &=& U_{\theta_{p}(g)}S_{p}(\xi_{g'}),\\ 
&&\text{ and}\\
(\text{CNP }3) & \sum\limits_{[g] \in G/\theta_{p}(G)}{E_{g,p}}(\xi_{g'}) &=& \xi_{g'} &&\text{ if } [G : \theta_{p}(G)]< \infty,
\end{array}\vspace*{-1mm}\]
where $E_{g,p} = U_{g}S_{p}S_{p}^{*}U_{g}^{*}$. In fact, (CNP $3$) holds also in the case of an infinite index $[G : \theta_{p}(G)]$, as $(\sum_{[g] \in F}E_{g,p})_{F \subset G/\theta_p(G)}$ converges to the identity on $\ell^2(G)$ as $F \nearrow G/\theta_p(G)$ with respect to the strong operator topology. But this convergence does not hold in norm because each $E_{g,p}$ is a non-zero projection. In view of our motivation to construct a universal C*-algebra based on this model, it is therefore reasonable to restrict this relation to the case where $[G:\theta_p(G)]$ is finite.

As the numbering indicates, we are interested in an additional relation (CNP $2$) which will increase the accessibility of the universal model: If $G$ was trivial, this would simply be the condition that $S_p$ and $S_q$ doubly commute for all relatively prime $p$ and $q$ in $P$, i.e. $S_p^*S_q = S_qS_p^*$. This condition has been employed successfully for quasi-lattice ordered groups, see \cite{Nic}*{Section 3} and also \cite{LR2} for more information. But as $G$ is an infinite group, this will not be sufficient. 

Moreover, we want to ensure that, within the universal model to be built, an expression corresponding to $S_p^*U_gS_p$ belongs to $C^*(G)$. This property has been used extensively in the context of semigroup crossed products involving transfer operators, see \cites{Exe1,Lar}.

An entirely different way to put it is that we aim for a better understanding of the structure of the commutative subalgebra $C^*(\{~E_{g,p}~|~g \in G,~p \in P~\})$ inside $\CL(\ell^2(G))$. In a much more general framework, this has been considered by Xin Li, see \cite{Li1} and resulted in a new definition of semigroup C*-algebras for discrete left cancellative semigroups with identity. One particular strength of his notion is the close connection between amenability of semigroups and nuclearity of their C*-algebras, see \cite{Li2}. 

All of these three instances suggest that a closer examination of the terms $S_p^*U_gS_q$ is in order. For $g = \theta_p(g_1)\theta_q(g_2)$ with $g_1,g_2 \in G$, we get $S_p^*U_gS_q = U_{g_1}S_{(p \wedge q)^{-1}q}S_{(p \wedge q)^{-1}p}^{*}U_{g_2}$. On the other hand, $g \notin \theta_p(G)\theta_q(G)$ is equivalent to $g\theta_q(G) \cap \theta_p(G) = \emptyset$, which forces $S_p^*U_gS_q = 0$. Thus we get
\[\begin{array}{lrclcl}
(\text{CNP }2) &\hspace*{-2mm} S_{p}^{*}U_gS_{q} &\hspace*{-2.5mm}=\hspace*{-2.5mm}& \begin{cases} 
U_{g_1}S_{(p \wedge q)^{-1}q}S^{*}_{(p \wedge q)^{-1}p}U_{g_2}& \text{ if } g = \theta_p(g_1)\theta_q(g_2),\\ 0& \text{ else.}\end{cases}
\end{array}\]
for all $g \in G,p,q \in P$. These observations motivate the following definition:

\begin{definition}\label{def:O-algebra ad-hoc}
$\CO[G,P,\theta]$ is the universal C*-algebra generated by a unitary representation $(u_{g})_{g \in G}$ of the group $G$ and a representation $(s_{p})_{p \in P}$ of the semigroup $P$ by isometries subject to the relations:
\[\begin{array}{lrcl}
(\text{CNP }1) & s_{p}u_{g} &\hspace*{-2.5mm}=\hspace*{-2.5mm}& u_{\theta_{p}(g)}s_{p}\vspace*{2mm}\\
(\text{CNP }2) & s_{p}^{*}u_gs_{q} &\hspace*{-2.5mm}=\hspace*{-2.5mm}& \begin{cases} 
u_{g_1}s_{(p \wedge q)^{-1}q}s_{(p \wedge q)^{-1}p}^{*}u_{g_2}& \text{ if } g = \theta_p(g_1)\theta_q(g_2),\\ 0,& \text{ else.}\end{cases}\vspace*{2mm}\\
(\text{CNP }3) & 1 &\hspace*{-2.5mm}=\hspace*{-2.5mm}& \sum\limits_{[g] \in G/\theta_{p}(G)}{e_{g,p}} \hspace*{2mm}\text{ if } [G : \theta_{p}(G)]< \infty,
\end{array}\]
where $e_{g,p} = u_{g}s_{p}s_{p}^{*}u_{g}^{*}$.
\end{definition}

\noindent We note the following immediate consequence of the construction:

\begin{proposition}\label{prop:O-canonical rep}
$\CO[G,P,\theta]$ has a canonical non-trivial representation on $\ell^{2}(G)$ given by $u_g \mapsto U_g,~s_p \mapsto S_p$. In particular, $\CO[G,P,\theta]$ is non-zero.
\end{proposition}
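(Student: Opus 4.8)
The plan is to deduce the statement directly from the universal property of $\CO[G,P,\theta]$. Since the generators are required to be unitaries and isometries, hence contractions, any family realizing the relations is bounded on words in the generators, so the universal C*-seminorm is finite and $\CO[G,P,\theta]$ is a bona fide (a priori possibly zero) C*-algebra. To produce the asserted representation on $\ell^2(G)$, it therefore suffices to check that the concrete operators $U_g$ and $S_p$ introduced before Definition~\ref{def:O-algebra ad-hoc} satisfy the relations (CNP 1)--(CNP 3); I would then invoke the universal property to obtain a unique $*$-homomorphism $\pi\colon\CO[G,P,\theta]\to\CL(\ell^2(G))$ with $\pi(u_g)=U_g$ and $\pi(s_p)=S_p$.

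First I would note that the required verifications are precisely the computations already carried out in the paragraphs preceding the definition: there $(U_g)_{g\in G}$ is shown to be a unitary representation of $G$, $(S_p)_{p\in P}$ a representation of $P$ by isometries with $S_p^*(\xi_{g'})=\chi_{\theta_p(G)}(g')\,\xi_{\theta_p^{-1}(g')}$, the identity $S_pU_g(\xi_{g'})=\xi_{\theta_p(gg')}=U_{\theta_p(g)}S_p(\xi_{g'})$ gives (CNP 1), and $\sum_{[g]\in G/\theta_p(G)}E_{g,p}=\id_{\ell^2(G)}$ with $E_{g,p}=U_gS_pS_p^*U_g^*$ gives (CNP 3) when $[G:\theta_p(G)]<\infty$. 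For (CNP 2) I would reuse the displayed computation of $S_p^*U_gS_q$: for $g=\theta_p(g_1)\theta_q(g_2)$ one gets $S_p^*U_gS_q=U_{g_1}S_{(p\wedge q)^{-1}q}S_{(p\wedge q)^{-1}p}^*U_{g_2}$, while $g\notin\theta_p(G)\theta_q(G)$ is equivalent to $g\theta_q(G)\cap\theta_p(G)=\emptyset$ and forces $S_p^*U_gS_q=0$. Having checked all three relations, the universal property produces $\pi$. For non-triviality I would use that $u_{1_G}=1$ in $\CO[G,P,\theta]$, whence $\pi(1)=U_{1_G}=\id_{\ell^2(G)}\neq 0$; thus $\pi$ is non-zero and $\CO[G,P,\theta]\neq 0$.

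The single step I expect to require genuine care is the well-posedness of (CNP 2), whose right-hand side is written in terms of a factorization $g=\theta_p(g_1)\theta_q(g_2)$ that is not unique. On the level of the concrete operators this causes no trouble, since $S_p^*U_gS_q$ is one fixed operator on $\ell^2(G)$: any admissible factorization must give the same value, the residual freedom in $(g_1,g_2)$ being exactly the ambiguity described by Lemma~\ref{lem:IAD intersection formula} and absorbed by (CNP 1) together with the isometry identities. Apart from confirming this consistency, the proof is pure bookkeeping drawn from the discussion preceding the definition, which is why the proposition follows at once.
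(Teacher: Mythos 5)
Your proposal is correct and follows exactly the route the paper intends: the paper states this proposition as an immediate consequence of the construction, since the verification that $U_g$ and $S_p$ satisfy (CNP 1)--(CNP 3) is precisely the content of the displayed computations preceding Definition~\ref{def:O-algebra ad-hoc}, and the universal property then yields the representation, which is non-zero because it sends the unit to $\id_{\ell^2(G)}$. Your additional remark on the well-posedness of (CNP 2) under non-unique factorizations is a sensible point of care (the paper addresses the analogous issue for the universal generators in Remark~\ref{rem:O[G,P,theta]}~a)), but for the fixed concrete operator $S_p^*U_gS_q$ it is, as you say, automatic.
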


\begin{remark}\label{rem:O[G,P,theta]}~
\begin{enumerate}[a)]
\item The presence of (CNP 1) guarantees that the expression in (CNP 2) is independent of the choice of $g_1$ and $g_2$ satisfying $g = \theta_p(g_1)\theta_q(g_2)$. To see this, suppose $g_3$ and $g_4$ satisfy $g = \theta_p(g_3)\theta_q(g_4)$ as well. Since $G$ is a group, $\theta_p(g_1^{-1}g_3) = \theta_q(g_2g_4^{-1})$ follows. This is equivalent to $\theta_{p'}(g_1^{-1}g_3) = \theta_{q'}(g_2g_4^{-1})$ for $p' := (p \wedge q)^{-1}p$ and $q':= (p \wedge q)^{-1}q$ by injectivity of $\theta_{p \wedge q}$. As $p'$ and $q'$ are relatively prime, condition (C) from Definition~\ref{def:IAD} implies $g_1^{-1}g_3 \in \theta_{q'}(G)$ and $g_2g_4^{-1} \in \theta_{p'}(G)$. Injectivity of $\theta_{p \vee q}$ and $\theta_{p'}(g_1^{-1}g_3)\theta_{q'}(g_4g_2^{-1}) = 1_G$ yield $\theta_{q'}^{-1}(g_1^{-1}g_3)\theta_{p'}^{-1}(g_2g_4^{-1}) = 1_G$.
Hence we conclude
\[\begin{array}{lcl}
u_{g_3}s_{q'}s_{p'}^{*}u_{g_4} 
&=& u_{g_1}u_{g_1^{-1}g_3}s_{q'}s_{p'}^{*}u_{g_4g_2^{-1}}u_{g_2}\vspace*{2mm}\\
&=& u_{g_1}s_{q'}u_{\theta_q^{-1}(g_1^{-1}g_3)\theta_p^{-1}(g_4g_2^{-1})}s_{p'}^*u_{g_2}\vspace*{2mm}\\
&=& u_{g_1}s_{q'}s_{p'}^{*}u_{g_2}.
\end{array}\]
\item If $p \in P$ and $g_{1},g_{2} \in G$ satisfy $g_{1}\theta_{p}(G) = g_{2}\theta_{p}(G)$, then 
\[e_{g_{2},p} = u_{g_1}u_{g_1^{-1}g_2}s_ps_p^*u_{g_2}^* = u_{g_1}s_ps_p^*u_{g_1^{-1}g_2}u_{g_2}^* = e_{g_{1},p}\]
follows from (CNP $1$). Thus the summation in (CNP $3$) makes sense.
\item Condition (CNP $2$) includes the following two special cases:
\[\begin{array}{lcll}
\hspace*{8mm} s_p^*s_q &=& s_qs_p^* &\text{ for all relatively prime } p,q \in P.\\
\hspace*{8mm} s_p^*u_gs_p &=& \chi_{\theta_p(G)}(g) u_{\theta_p^{-1}(g)} &\text{ for all } g \in G, p \in P.
\end{array}\]  
\end{enumerate} 
\end{remark}

\noindent One immediate benefit of  (CNP $1$) and  (CNP $2$) is the following lemma, whose straightforward proof is omitted.

\begin{lemma}\label{lem:dense span}
The linear span of $(u_gs_ps_q^{*}u_h)_{g,h \in G, p,q \in P}$ is dense in $\CO[G,P,\theta]$.
\end{lemma}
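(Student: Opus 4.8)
The plan is to show that the linear span of the family $(u_gs_ps_q^*u_h)_{g,h \in G, p,q \in P}$ is closed under multiplication and under taking adjoints, so that it forms a dense $*$-subalgebra of $\CO[G,P,\theta]$. Since $\CO[G,P,\theta]$ is by construction generated as a C*-algebra by the $u_g$ and $s_p$ (together with their adjoints), it suffices to verify that an arbitrary finite word in the generators $u_g, u_g^* = u_{g^{-1}}, s_p, s_p^*$ can be rewritten, using the relations, as a finite linear combination of elements of the stated form. Taking norm-closures then yields density.

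First I would record that the set is manifestly closed under adjoints, since $(u_gs_ps_q^*u_h)^* = u_{h^{-1}}s_qs_p^*u_{g^{-1}}$ is again of the prescribed shape. The substance of the proof is therefore closure under products. Here the key observation is that any product of two elements $u_{g}s_ps_q^*u_{h} \cdot u_{g'}s_{p'}s_{q'}^*u_{h'}$ reduces to handling the middle term $s_q^* u_{hg'} s_{p'}$, because the unitary relation (CNP $1$) lets one move all left-hand $u$'s to the far left and all right-hand $u$'s to the far right. The middle term is precisely what (CNP $2$) governs: either it vanishes (giving the zero element, which lies in the span), or it equals $u_{k_1}s_{(q \wedge p')^{-1}p'}s_{(q \wedge p')^{-1}q}^*u_{k_2}$ for suitable $k_1, k_2 \in G$. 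Substituting this back and again using (CNP $1$) to consolidate the flanking unitaries produces a single element $u_{\tilde g}s_{\tilde p}s_{\tilde q}^*u_{\tilde h}$ of the desired form.

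Thus the reduction strategy is: given any finite alternating word in the generators, repeatedly collapse adjacent $s^*$--$s$ pairs (with an intervening unitary) via (CNP $2$), each time producing a strictly simpler word of the same normal form $u \, s \, s^* \, u$, and absorb neighbouring unitaries via (CNP $1$). An induction on the length of the word then shows every such word lies in the linear span, and hence so does every element of the dense $*$-algebra generated by $\{u_g, s_p\}$. I would also note at the outset that $s_ps_p = s_{p^2}$ and more generally products $s_ps_{p'} = s_{pp'}$ hold since $(s_p)_{p \in P}$ is a representation of the monoid $P$, which ensures that consecutive $s$'s (or consecutive $s^*$'s) already collapse to a single factor, keeping the normal form intact.

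The main obstacle, and the reason the lemma is not entirely trivial, is bookkeeping the index arithmetic in (CNP $2$): after one reduction the new semigroup exponents are $(q \wedge p')^{-1}p'$ and $(q \wedge p')^{-1}q$, and one must check that the flanking meet/join operations in the free abelian monoid $P$ behave consistently so that the induction genuinely terminates with exponents in $P$ rather than requiring formal inverses outside $P$. This is exactly where the lattice structure of $P$ and the well-definedness noted in Remark~\ref{rem:O[G,P,theta]}~a) are used; once it is checked that every intermediate exponent such as $(q \wedge p')^{-1}p'$ indeed lies in $P$ (which holds because $q \wedge p'$ divides $p'$ in the free abelian monoid), the remaining manipulations are the routine applications of (CNP $1$) and (CNP $2$) described above. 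This is precisely why the authors describe the proof as straightforward and omit it.
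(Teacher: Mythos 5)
Your proposal is correct and is exactly the ``straightforward proof'' the paper omits: one checks that the span is a $*$-subalgebra containing the generators ($u_g = u_gs_{1_P}s_{1_P}^*u_{1_G}$, $s_p = u_{1_G}s_ps_{1_P}^*u_{1_G}$), with closure under products reducing via (CNP $1$) to the middle term $s_q^*u_{hg'}s_{p'}$, which (CNP $2$) resolves into the normal form. The bookkeeping you flag is fine, since $q\wedge p'$ divides both $q$ and $p'$ in the free abelian monoid $P$, so all exponents stay in $P$.
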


\begin{lemma}\label{lem:e_(g,p) commute}
The projections $(e_{g,p})_{g \in G, p \in P}$ commute. More precisely, for $g,h \in G$ and $p,q \in P$, we have \[e_{g,p}e_{h,q} = 
\begin{cases} 
e_{g\theta_{p}(h'),p \vee q}& \text{ if } g^{-1}h \in \theta_p(G)\theta_q(G),\\
0& \text{ else,}
\end{cases}\]
where $h' \in G$ is determined uniquely up to multiplication from the right by elements of $\theta_{p^{-1}(p \vee q)}(G)$ by the condition that $g\theta_{p}(h') \in h\theta_{q}(G)$.
\end{lemma}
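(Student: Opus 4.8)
```latex
\textbf{Plan of proof.} The statement has two parts: first that the projections $e_{g,p}$ and $e_{h,q}$ commute, and second the explicit product formula with the case distinction governed by whether $g^{-1}h \in \theta_p(G)\theta_q(G)$. My plan is to attack the product formula directly, since once the formula is established, commutativity follows from the symmetry of the answer: the formula for $e_{g,p}e_{h,q}$ and for $e_{h,q}e_{g,p}$ must both equal $e_{g\theta_p(h'),p\vee q}$ (the support $g\theta_p(h')\theta_{p\vee q}(G) = h\theta_q(h'')\theta_{p\vee q}(G)$ being the intersection $g\theta_p(G)\cap h\theta_q(G)$, which is symmetric in the two factors), and likewise both vanish exactly when $g^{-1}h \notin \theta_p(G)\theta_q(G)$, a condition equivalent to $h^{-1}g \notin \theta_q(G)\theta_p(G)$ since $G$ is a group. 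So commutativity is essentially a corollary of the formula together with Lemma~\ref{lem:IAD intersection formula}.

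\textbf{Computing the product.} The first step is to write out $e_{g,p}e_{h,q} = u_g s_p s_p^* u_g^* u_h s_q s_q^* u_h^* = u_g s_p s_p^* u_{g^{-1}h} s_q s_q^* u_h^*$, so the heart of the matter is to evaluate the middle term $s_p^* u_{g^{-1}h} s_q$ using (CNP $2$). Setting $k := g^{-1}h$, relation (CNP $2$) tells us precisely that $s_p^* u_k s_q$ vanishes unless $k \in \theta_p(G)\theta_q(G)$, and equals $u_{k_1} s_{(p\wedge q)^{-1}q} s^*_{(p\wedge q)^{-1}p} u_{k_2}$ when $k = \theta_p(k_1)\theta_q(k_2)$. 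This immediately yields the \emph{else} branch of the formula: if $g^{-1}h \notin \theta_p(G)\theta_q(G)$ the whole product is zero. In the nonvanishing case, I would substitute the expression back in to obtain $u_g u_{k_1} s_p s_{(p\wedge q)^{-1}q} s^*_{(p\wedge q)^{-1}p} s_q^* u_{k_2} u_h^*$, then simplify the isometry words using $s_p s_{(p\wedge q)^{-1}q} = s_{p(p\wedge q)^{-1}q} = s_{p\vee q}$ (and symmetrically for the adjoints), relying on $p\,(p\wedge q)^{-1}q = p\vee q$ in the free abelian, hence lattice-ordered, monoid $P$.

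\textbf{Identifying the result and the uniqueness clause.} After the simplification the product collapses to $u_{gk_1} s_{p\vee q} s_{p\vee q}^* u_{(gk_1)^{-1}\,\cdot\,(\text{something})}$; here I must be careful that the left and right group elements match up to produce a genuine projection of the form $e_{m,p\vee q} = u_m s_{p\vee q} s_{p\vee q}^* u_m^*$. The cleanest route is to observe that $e_{g,p}e_{h,q}$ is a product of two commuting-range projections onto the closures of $g\theta_p(G)$ and $h\theta_q(G)$ in the canonical representation, so its range is the projection onto $g\theta_p(G)\cap h\theta_q(G)$, which by Lemma~\ref{lem:IAD intersection formula} equals $g\theta_p(h')\theta_{p\vee q}(G)$ with $h'$ determined as stated; this pins down the answer as $e_{g\theta_p(h'),p\vee q}$ and simultaneously supplies the uniqueness-up-to-$\theta_{p^{-1}(p\vee q)}(G)$ clause verbatim from that lemma. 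The step I expect to be the main obstacle is the bookkeeping in reconciling the two descriptions of $k_1$: the element $k_1$ produced by (CNP $2$) and the element $h'$ produced by the intersection formula must be shown to give the same coset, i.e.\ that $g\theta_p(h') = g k_1 \cdot (\text{isometry-word contribution})$ lands in $h\theta_q(G)$, which is exactly the defining property of $h'$ in Lemma~\ref{lem:IAD intersection formula}. Matching conventions between the well-definedness argument of Remark~\ref{rem:O[G,P,theta]}~a) and the intersection formula is where the care is needed, but no new idea beyond (CNP $1$)--(CNP $2$) and Lemma~\ref{lem:IAD intersection formula} is required.
```
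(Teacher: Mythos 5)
Your overall plan is the same as the paper's: expand $e_{g,p}e_{h,q} = u_g s_p\,(s_p^*u_{g^{-1}h}s_q)\,s_q^*u_h^*$, apply (CNP $2$) to the middle factor to get the vanishing case, substitute back and contract the isometry words to $s_{p\vee q}s_{p\vee q}^*$. However, there are two problems with how you finish.

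First, a bookkeeping error that changes the answer: when you pull $u_{k_1}$ out of $s_p u_{k_1}$ you must use (CNP $1$), which gives $u_{\theta_p(k_1)}s_p$, not $u_{k_1}s_p$. Your intermediate expression $u_g u_{k_1} s_p s_{(p\wedge q)^{-1}q}\cdots$ and your final $u_{gk_1}s_{p\vee q}s_{p\vee q}^*\cdots$ would therefore produce $e_{gk_1,p\vee q}$ rather than the correct $e_{g\theta_p(k_1),p\vee q}$ (and symmetrically $u_{\theta_q(k_2)}$ on the right). Second, and more seriously, your ``cleanest route'' for identifying the result --- computing ranges of the projections in the canonical representation on $\ell^2(G)$ and invoking Lemma~\ref{lem:IAD intersection formula} --- does not prove the identity in $\CO[G,P,\theta]$. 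The lemma is a statement about the universal algebra, and the canonical representation of Proposition~\ref{prop:O-canonical rep} is not known to be faithful at this stage; faithfulness is only obtained much later as a corollary of simplicity, whose proof depends on this very lemma. So the representation argument can at best confirm the formula in $\CL(\ell^2(G))$, not establish it where it is needed. The repair is purely algebraic and short, and it is what the paper does: writing $g^{-1}h = \theta_p(k_1)\theta_q(k_2)$ gives $g\theta_p(k_1) = h\theta_q(k_2^{-1})$ directly, so the group elements flanking $s_{p\vee q}s_{p\vee q}^*$ coincide and the product is the genuine projection $e_{g\theta_p(k_1),\,p\vee q}$; Lemma~\ref{lem:IAD intersection formula} (or the well-definedness argument of Remark~\ref{rem:O[G,P,theta]}~a)) then supplies the uniqueness clause. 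Your observation that commutativity follows from the symmetry of the resulting formula is correct and matches the paper.
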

\begin{proof}
For $g,h \in G$ and $p,q \in P$, the product $e_{g,p}e_{h,q}$ is non-zero only if $g^{-1}h \in \theta_p(G)\theta_q(G)$ by (CNP $2$). So let us assume that $g^{-1}h \in \theta_p(G)\theta_q(G)$ holds. Then there are $g',h' \in G$ such that $g^{-1}h = \theta_p(h')\theta_q(g')$. As $G$ is a group, this is equivalent to $h\theta_q(g')^{-1} = g\theta_p(h')$. Thus we get 
\[e_{g,p}e_{h,q} = u_{g\theta_p(h')}s_ps_{(p \wedge q)^{-1}q}s_{(p \wedge q)^{-1}p}^*s_q^*u^*_{h\theta_q(g')^{-1}} 
= e_{g\theta_p(h'),p \vee q}.\]  
Clearly, this also proves that the two projections commute. The uniqueness assertion follows from (CNP $2$). 
\end{proof}

\begin{definition}\label{def:diagonal}
The C*-subalgebra $\CD$ of $\CO[G,P,\theta]$ generated by the commuting projections $(e_{g,p})_{g \in G, p \in P}$ is called the \emph{diagonal}. In addition, let $\CD_p := C^*(\{e_{g,q} \mid [g] \in G/\theta_p(G),p \in qP\})$ denote the C*-subalgebra of $\CD$ corresponding to $p \in P$.
\end{definition} 

\noindent We note the following obvious fact:

\begin{lemma}\label{lem:D as inductive limit}
For all $p,q \in P$, $p \in qP$ implies $\CD_q \subset \CD_p$. $\CD$ is the closure of $\bigcup_{p \in P} \CD_p$. If $[G : \theta_p(G)]$ is finite, then 
\[\CD_p = \Span\{e_{g,p} \mid [g] \in G/\theta_p(G)\} \cong \C^{[G:\theta_p(G)]}.\]
\end{lemma}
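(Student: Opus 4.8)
The plan is to prove the three assertions of Lemma~\ref{lem:D as inductive limit} in order, each following directly from the relations and from Lemma~\ref{lem:e_(g,p) commute}.

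First I would establish the inclusion $\CD_q \subset \CD_p$ whenever $p \in qP$. By definition $\CD_q = C^*(\{e_{g,r} \mid [g] \in G/\theta_q(G),\ q \in rP\})$, so it suffices to show that each generator $e_{g,r}$ with $q \in rP$ is already a generator of $\CD_p$, i.e. that $p \in rP$. This is immediate from transitivity of divisibility in $P$: from $p \in qP$ and $q \in rP$ we get $p \in rP$. Hence every defining generator of $\CD_q$ appears among the defining generators of $\CD_p$, giving $\CD_q \subset \CD_p$.

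Next I would show $\CD = \overline{\bigcup_{p \in P} \CD_p}$. The inclusion $\supseteq$ is clear since each $\CD_p \subset \CD$. For $\subseteq$, recall that $\CD$ is generated by all projections $e_{g,p}$ with $g \in G$, $p \in P$; since each such $e_{g,p}$ lies in $\CD_p$ (take $q = p$ in the defining set), the $*$-algebra generated by these projections is contained in $\bigcup_{p} \CD_p$. Here I would use that the family $\{\CD_p\}_{p \in P}$ is directed upward: given $p_1, p_2 \in P$, the element $p_1 \vee p_2$ satisfies $p_1 \vee p_2 \in p_1 P \cap p_2 P$, so $\CD_{p_1}, \CD_{p_2} \subset \CD_{p_1 \vee p_2}$ by the first part. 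Consequently $\bigcup_p \CD_p$ is itself a $*$-subalgebra, and by Lemma~\ref{lem:e_(g,p) commute} any product $e_{g,p}e_{h,q}$ of generators again lies in some $\CD_{p \vee q}$, so the $*$-algebra generated by all the $e_{g,p}$ coincides with $\bigcup_p \CD_p$. Taking closures yields $\CD \subseteq \overline{\bigcup_p \CD_p}$.

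Finally, for the structure of $\CD_p$ in the finite-index case, I would argue that when $[G:\theta_p(G)] < \infty$ the linear span $\Span\{e_{g,p} \mid [g] \in G/\theta_p(G)\}$ is already a finite-dimensional C*-algebra, hence closed, and then identify it with $\C^{[G:\theta_p(G)]}$. By Remark~\ref{rem:O[G,P,theta]}~b) the projection $e_{g,p}$ depends only on the coset $[g] \in G/\theta_p(G)$, so there are exactly $[G:\theta_p(G)]$ such projections. By Lemma~\ref{lem:e_(g,p) commute} with $q = p$, distinct cosets give orthogonal projections: if $g^{-1}h \notin \theta_p(G)\theta_p(G) = \theta_p(G)$ then $e_{g,p}e_{h,p} = 0$, and since $[g] \neq [h]$ means $g^{-1}h \notin \theta_p(G)$, the projections for distinct cosets are mutually orthogonal. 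Together with relation (CNP~$3$), which gives $\sum_{[g]} e_{g,p} = 1$, this exhibits $\{e_{g,p}\}$ as a complete system of orthogonal projections summing to the unit, so their span is a unital commutative C*-algebra isomorphic to $\C^{[G:\theta_p(G)]}$. It remains to check that this span equals $\CD_p$: the generators $e_{h,q}$ of $\CD_p$ with $p \in qP$ and $[h] \in G/\theta_q(G)$ must be expressed in terms of the $e_{g,p}$. The key point, which I expect to be the main obstacle, is verifying this reduction — namely that for $p \in qP$ one has $e_{h,q} = \sum e_{g,p}$ where the sum runs over those cosets $[g] \in G/\theta_p(G)$ with $g\theta_p(G) \subseteq h\theta_q(G)$; this follows by applying (CNP~$3$) for the finer index $p^{-1}\!\cdot\!p$ inside $h\theta_q(G)$, i.e. decomposing the projection $e_{h,q}$ along the refinement $G/\theta_p(G) \to G/\theta_q(G)$ and using Lemma~\ref{lem:e_(g,p) commute} to identify each summand. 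Granting this, every generator of $\CD_p$ lies in $\Span\{e_{g,p}\}$, and the reverse containment is trivial, completing the identification.
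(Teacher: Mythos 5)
Your proof is correct. The paper states this lemma without proof (``we note the following obvious fact''), and your argument --- transitivity of divisibility for the nesting, directedness of $\{\CD_p\}_{p\in P}$ for the density claim, and the decomposition $e_{h,q} = \sum e_{g,p}$ over the cosets $g\theta_p(G) \subseteq h\theta_q(G)$ obtained by multiplying $e_{h,q}$ against the (CNP~$3$) partition of unity and invoking Lemma~\ref{lem:e_(g,p) commute} --- is exactly the intended one. Two cosmetic points: the phrase ``(CNP~$3$) for the finer index $p^{-1}\!\cdot\!p$'' should presumably read $q^{-1}p$, and the count $\dim = [G:\theta_p(G)]$ implicitly uses that each $e_{g,p}$ is nonzero, which follows from the canonical representation on $\ell^2(G)$ of Proposition~\ref{prop:O-canonical rep}, where $e_{g,p}$ maps to the nonzero projection $E_{g,p}$.
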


\noindent Let us make the following non-trivial observation:

\begin{lemma}\label{lem:small subprojections1}
Suppose $g \in G, p \in P$ and a finite subset $F$ of $G \times P$ are chosen in such a way that $e_{g,p}\prod_{(h,q) \in F}(1-e_{h,q})$ is non-zero. Then there exist $g' \in G$ and $p' \in P$ satisfying $e_{g',p'} \leq e_{g,p}\prod_{(h,q) \in F}(1-e_{h,q})$.
\end{lemma}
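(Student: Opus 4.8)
The plan is to translate the question about the product of projections into a purely combinatorial statement about the cosets $g\theta_p(G)$, and then feed the answer back through the multiplication rule of Lemma~\ref{lem:e_(g,p) commute}. Throughout I think of $e_{g,p}$ as attached to the coset $g\theta_p(G)\subseteq G$, so that $e_{g,p}e_{h,q}$ corresponds to $g\theta_p(G)\cap h\theta_q(G)$ and $e_{g,p}\leq e_{h,q}$ corresponds to the inclusion $g\theta_p(G)\subseteq h\theta_q(G)$.

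First I would put the product into normal form. Writing $P_0:=e_{g,p}\prod_{(h,q)\in F}(1-e_{h,q})$, each factor is governed by Lemma~\ref{lem:e_(g,p) commute}: either $e_{g,p}e_{h,q}=0$, in which case $e_{g,p}(1-e_{h,q})=e_{g,p}$ and the factor may be discarded; or $e_{g,p}e_{h,q}=e_{g,p}$, which would force $P_0=0$ and is excluded by hypothesis; or $e_{g,p}e_{h,q}=e_{c,r}$ is a proper nonzero subprojection of $e_{g,p}$ with $r=p\vee q\gneqq p$. After this reduction I may assume $F=\{(h_i,q_i)\}_{i=1}^n$ with $f_i:=e_{g,p}e_{h_i,q_i}=e_{c_i,r_i}$, $r_i=p\vee q_i\gneqq p$, and $P_0=e_{g,p}\prod_i(1-f_i)$; if $n=0$ then $e_{g,p}$ itself works. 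I then set $r:=\bigvee_i r_i=p\vee\bigvee_i q_i$ and consider the set $A:=g\theta_p(G)\setminus\bigcup_i h_i\theta_{q_i}(G)$, noting $g\theta_p(G)\cap h_i\theta_{q_i}(G)=c_i\theta_{r_i}(G)$ by Lemma~\ref{lem:IAD intersection formula}.

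The easy half is to show that if $A\neq\emptyset$ then a suitable subprojection exists. Picking $x\in A$, I would verify that the whole coset $x\theta_r(G)$ lies in $A$: it sits inside $g\theta_p(G)$ because $\theta_r(G)\subseteq\theta_p(G)$, and it is disjoint from each $h_i\theta_{q_i}(G)$ because $\theta_r(G)\subseteq\theta_{q_i}(G)$ while $x\notin h_i\theta_{q_i}(G)$. Translating back via Lemma~\ref{lem:e_(g,p) commute}, these inclusions and disjointnesses read $e_{x,r}\leq e_{g,p}$ and $e_{x,r}f_i=0$ for all $i$, so $e_{x,r}\leq P_0$ and $(g',p')=(x,r)$ is the desired pair.

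The real content, and the step I expect to be the main obstacle, is the converse: that $P_0\neq 0$ forces $A\neq\emptyset$, i.e. that $A=\emptyset$ implies $P_0=0$. If $A=\emptyset$ then $g\theta_p(G)=\bigcup_i c_i\theta_{r_i}(G)$, and translating by $g^{-1}$ and applying the injective $\theta_p^{-1}$ exhibits $G$ as a finite union of cosets of the proper subgroups $\theta_{s_i}(G)$ with $s_i=p^{-1}r_i\neq 1_P$. Here the irreversibility/finiteness subtlety enters: a finite union of cosets of infinite-index subgroups can never exhaust a group, so by B.~H.~Neumann's lemma on coset coverings at least one $\theta_{s_i}(G)$ has finite index and, discarding the infinite-index cosets, the finite-index ones already cover. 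I would then pass to the common level $t:=\bigvee_{i\in J}r_i$ over the surviving indices $J$; since every generator dividing such an $r_i$ has finite-index image, $[G:\theta_t(G)]<\infty$, so (CNP~3) applies and yields $e_{g,p}=\sum_{[x]}e_{x,t}$ over the $\theta_t$-cosets contained in $g\theta_p(G)$. Each such coset lies inside some $c_i\theta_{r_i}(G)$, giving $e_{x,t}\leq f_i$ and hence $e_{x,t}P_0=0$; summing over $x$ gives $P_0=e_{g,p}P_0=0$, the required contradiction. The delicate points to get right are the reduction to finite-index factors, where Neumann's lemma does the work that (CNP~3) cannot do in the infinite-type regime, and the verification that the common level $t$ still has finite index.
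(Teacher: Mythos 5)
Your argument is correct in substance but follows a genuinely different route from the paper's. The paper proves the lemma by a two-stage constructive induction: it first factors each $q$ appearing in $F$ as $q=q^{(fin)}q^{(inf)}$, uses (CNP~3) on the finite part to rewrite the product as an orthogonal sum of terms in which only ``purely infinite-index'' defect projections remain, and then iteratively shrinks the leading projection, at each step exploiting that $[G:\theta_{p_0^{-1}p_1}(G)]$ is infinite to dodge the finitely many blocked cosets at a minimally chosen level $p_1$. You instead translate everything into the coset picture and reduce the whole lemma to the statement that $g\theta_p(G)\setminus\bigcup_i h_i\theta_{q_i}(G)\neq\emptyset$, handling the emptiness case by B.~H.~Neumann's lemma on coverings of a group by finitely many cosets. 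This outsources exactly the combinatorial difficulty that the paper resolves by hand, and it is arguably cleaner; what the paper's self-contained iteration buys is independence from Neumann's lemma and a formulation that stays entirely inside the projection calculus of Lemma~\ref{lem:e_(g,p) commute}.

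One step of yours needs repair, though it does not sink the argument. After applying Neumann's lemma you set $t=\bigvee_{i\in J}r_i$ with $r_i=p\vee q_i$ and claim $[G:\theta_t(G)]<\infty$; this is false in general, since $t=p\cdot\bigvee_{i\in J}s_i$ with $s_i=p^{-1}r_i$, and Neumann's lemma controls only the indices $[G:\theta_{s_i}(G)]$ --- the factor $p$ may well have infinite index (the lemma is stated for arbitrary irreversible algebraic dynamical systems, not just those of finite type). What you actually need, and what does follow, is $[\theta_p(G):\theta_t(G)]=[G:\theta_{\bigvee_{i\in J}s_i}(G)]\leq\prod_{i\in J}[G:\theta_{s_i}(G)]<\infty$. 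Accordingly, (CNP~3) must be applied at level $u:=p^{-1}t=\bigvee_{i\in J}s_i$ and conjugated into position, i.e. $e_{g,p}=u_gs_p\bigl(\sum_{[y]\in G/\theta_u(G)}e_{y,u}\bigr)s_p^*u_g^*=\sum_{[y]}e_{g\theta_p(y),t}$, which is exactly the decomposition of $e_{g,p}$ over the $\theta_t$-cosets contained in $g\theta_p(G)$ that your contradiction requires. With that correction the rest of your argument (each such coset lies in some $c_i\theta_{r_i}(G)$ with $i\in J$, hence $e_{g\theta_p(y),t}P_0=0$, hence $P_0=0$) goes through.
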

\begin{proof}
If $F$ is empty, then $\prod_{(h,q) \in F}(1-e_{h,q}) = 1$ by convention, so there is nothing to show. Now let $F$ be non-empty. For $(h,q) \in F$, let us decompose $q$ uniquely as $q = q^{(fin)}q^{(inf)}$, where $[G:\theta_{q^{(fin)}}(G)]$ is finite and we require that, for each $r \in P$ with $q \in rP$, finiteness of $[G:\theta_r(G)]$ implies $q^{(fin)} \in rP$. In other words, $[G:\theta_r(G)]$ is infinite for every $r \neq 1_P$ with $q^{(inf)} \in rP$. Using (CNP 3) for $q^{(fin)}$ and Lemma~\ref{lem:e_(g,p) commute}, we compute
\[\begin{array}{lcl}
1-e_{h,q} &=& (1-e_{h,q^{(fin)}}e_{h,q^{(inf)}})\sum\limits_{[k] \in G/\theta_{q^{(fin)}}(G)} e_{k,p^{(fin)}}\vspace*{2mm}\\
&=& e_{h,q^{(fin)}}(1-e_{h,q^{(inf)}}) + \sum\limits_{\substack{[k] \in G/\theta_{q^{(fin)}}(G)\\ [k] \neq [h]}} e_{k,q^{(fin)}}.
\end{array}\] 
Therefore, we can rewrite the initial product as
\[\begin{array}{c} 
e_{g,p}\prod\limits_{(h,q) \in F}(1-e_{h_i,q_i}) = \sum\limits_{(\tilde{g},\tilde{p}) \in \tilde{F}} e_{\tilde{g},\tilde{p}}\prod\limits_{(h,q) \in F_{(\tilde{g},\tilde{p})}}(1-e_{h,q}),
\end{array}\]
where
\begin{enumerate}[$\cdot$]
\item $\tilde{F}$ is a finite subset of $G{\times}P$,
\item $e_{\tilde{g},\tilde{p}} \leq e_{g,p}$ for all $(\tilde{g},\tilde{p}) \in \tilde{F}$,
\item the projections $(e_{\tilde{g},\tilde{p}})_{(\tilde{g},\tilde{p}) \in \tilde{F}}$ are mutually orthogonal,
\item for each $(\tilde{g},\tilde{p}) \in \tilde{F}$, $F_{(\tilde{g},\tilde{p})}$ is a finite subset of $G{\times}P$, and
\item each $(h,q) \in F_{(\tilde{g},\tilde{p})}$ satisfies $q = q^{(inf)}$ and $\tilde{p} \notin qP$.
\end{enumerate}
Since the product $e_{g,p}\prod_{(h,q) \in F}(1-e_{h_i,q_i})$ on the left hand side is non-zero, there is $(g_0,p_0) \in \tilde{F}$ such that $e_{g_0,p_0}\prod_{(h,q) \in F_{(g_0,p_0)}}(1-e_{h,q})$ is non-zero. Without loss of generality, we may assume that $e_{g_0,p_0}e_{h,q}$ is non-zero for all $(h,q) \in F_{(g_0,p_0)}$. Consider $F_P := \{p_0 \vee q \mid (h,q) \in F_{(g_0,p_0)} \text{ for some } h \in G\}$. Pick $p_1 \in F_P$ which is minimal in the sense that for any other $r \in F_P$, $p_1 \in rP$ implies $r=p_1$. Let $(h_1,q_1),\dots,(h_n,q_n) \in F_{(g_0,p_0)}$ denote the elements satisfying $p_0 \vee q_i = p_1$. According to Lemma~\ref{lem:e_(g,p) commute}, we have 
\[e_{g_0,p_0}e_{h_i,q_i} = e_{g_0\theta_{p_0}(g'_i),p_1} \text{ for a suitable } g'_i \in G \text{ (for } i=1,\dots,n).\] 
Note that $p_0^{-1}p_1 \neq 1_P$ and $q_1 = q_1^{(inf)} \in p_0^{-1}p_1P$, so $[G:\theta_{p_0^{-1}p_1}(G)]$ is infinite. Hence there exists $g_1 \in g_0\theta_{p_0}(G)$ with 
\[e_{g_1,p_1} \leq e_{g_0,p_0} \text{ and } e_{g_1,p_1}e_{h_i,q_i} = 0 \text{ for } i = 1,\dots,n.\]
Setting 
\[F_{(g_1,p_1)}:= \{(h,q) \in F_{(g_0,p_0)} \mid e_{h,q}e_{g_1,p_1} \neq 0\} \subsetneqq F_{(g_0,p_0)},\]
we observe that 
\[\begin{array}{c} e_{g_1,p_1} \prod\limits_{(h,q) \in F_{(g_1,p_1)}}(1-e_{h,q}) \neq 0 \end{array}\]
follows from the initial statement for $(g_0,p_0)$ and $F_{(g_0,p_0)}$ since we have chosen $p_1$ in a minimal way. Indeed, if the product was trivial, then there would be $(h,q) \in F_{(g_1,p_1)}$ with $e_{h,q} \geq e_{g_1,p_1}$. By Lemma~\ref{lem:e_(g,p) commute}, this would force $p_1 \in qP$ and therefore $p_1 \in (p_1 \vee q)P \subset (p_0 \vee q)P$, which cannot be true since $p_1$ was chosen in a minimal way. 

Thus, we can iterate the process used to obtain $(g_1,p_1)$ and $F_{(g_1,p_1)}$ for $(g_0,p_0)$ and $F_{(g_0,p_0)}$. After finitely many steps, we arrive at an element $(g_n,p_n)=:(g',p')$ with the property that $e_{g',p'} \leq e_{g_0,p_0}$ is orthogonal to $e_{h,q}$ for all $(h,q) \in F_{(g_0,p_0)}$. This establishes the claim.
\end{proof}

\noindent The possibility of passing to smaller subprojections that avoid finitely many defect projections provided through Lemma~\ref{lem:small subprojections1} will be crucial for the proof of pure infiniteness and simplicity of $\CO[G,P,\theta]$, see Theorem~\ref{thm:O p.i. and simple} and in particular Lemma~\ref{lem:small subprojections2}. A first application of this observation lies in the determination of the spectrum of $\CD$:

\begin{lemma}\label{lem:IAD spec D as a completion of G}
The spectrum of $\CD$, denoted by $G_\theta$, is a totally disconnected, compact Hausdorff space. A basis for the topology on $G_\theta$ is given by the cylinder sets
\[Z_{(g,p),(h_1,q_1),\dots,(h_n,q_n)} = \{\chi \in G_\theta \mid \chi(e_{g,p}) = 1,~\chi(e_{h_i,q_i}) = 0 \text{ for all } i\},\]
where $n \in \N, g,h_1,\dots,h_n \in G$ and $p,q_1,\dots,q_n \in P$. Moreover, 
\[\iota(g) \in Z_{(g',p),(h_1,q_1),\dots,(h_n,q_n)} \Longleftrightarrow g \in g'\theta_p(G) \text{ and } g \notin h_i\theta_{q_i}(G) \text{ for all } i\]
defines a map $\iota:G \longrightarrow G_\theta$ with dense image. $\iota$ is injective if and only if $(G,P,\theta)$ is minimal.
\end{lemma}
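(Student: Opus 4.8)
The plan is to use that $\CD$ is a unital commutative C*-algebra generated by the commuting projections $(e_{g,p})_{g \in G, p \in P}$, so that Gelfand duality gives $\CD \cong C(G_\theta)$ with $G_\theta$ its space of characters. First I would note that $\CD$ is unital, since $e_{1_G,1_P} = u_{1_G}s_{1_P}s_{1_P}^*u_{1_G}^* = 1$; hence $G_\theta$ is compact Hausdorff. As each $e_{g,p}$ is a projection, every character takes only the values $0$ and $1$ on it, so the sets $\{\chi : \chi(e_{g,p}) = 1\}$ and their complements are clopen and generate the weak-$*$ topology, giving total disconnectedness at once. For the basis statement I would take an arbitrary basic weak-$*$ open set, namely a finite intersection of conditions $\chi(e_{g_j,p_j}) = 1$ and $\chi(e_{h_i,q_i}) = 0$. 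By Lemma~\ref{lem:e_(g,p) commute}, the product of the projections occurring in the ``$=1$'' conditions is again a single projection $e_{g,p}$ (or zero, in which case the set is empty), and multiplicativity of $\chi$ together with $\prod_j e_{g_j,p_j} \leq e_{g_j,p_j}$ lets me replace all the ``$=1$'' conditions by the single condition $\chi(e_{g,p}) = 1$. Absorbing the unit $e_{1_G,1_P}$ when no ``$=1$'' condition is present, every nonempty basic open set is thus of cylinder form $Z_{(g,p),(h_1,q_1),\dots,(h_n,q_n)}$.

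Next I would build $\iota$ from the canonical representation of Proposition~\ref{prop:O-canonical rep}: for $g \in G$ set $\iota(g)(d) := \langle D\xi_g,\xi_g\rangle$, where $D$ denotes the image of $d \in \CD$ acting on $\ell^2(G)$. Because the projections $E_{g',p} = U_{g'}S_pS_p^*U_{g'}^*$ are simultaneously diagonalised by the orthonormal basis $(\xi_g)_{g \in G}$, with $E_{g',p}\xi_g = \chi_{g'\theta_p(G)}(g)\,\xi_g$, the functional $\iota(g)$ is multiplicative on $\Span\{e_{g,p}\}$ and hence extends to a character of $\CD$. By construction $\iota(g)(e_{g',p}) = 1$ precisely when $g \in g'\theta_p(G)$ and $\iota(g)(e_{h_i,q_i}) = 0$ precisely when $g \notin h_i\theta_{q_i}(G)$, which yields the stated membership criterion.

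The heart of the matter, and the step I expect to be the main obstacle, is density of $\iota(G)$. It suffices to hit every nonempty cylinder set $Z := Z_{(g,p),(h_1,q_1),\dots,(h_n,q_n)}$, and non-emptiness of $Z$ is exactly non-vanishing of the projection $e_{g,p}\prod_i(1-e_{h_i,q_i})$ --- precisely the hypothesis of Lemma~\ref{lem:small subprojections1}. That lemma supplies $g' \in G$ and $p' \in P$ with $e_{g',p'} \leq e_{g,p}\prod_i(1-e_{h_i,q_i})$. Since $g' \in g'\theta_{p'}(G)$ we have $\iota(g')(e_{g',p'}) = 1$, and domination of projections forces $\iota(g')$ to take the value $1$ on $e_{g,p}\prod_i(1-e_{h_i,q_i})$, i.e. $\iota(g') \in Z$. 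This is the promised first use of Lemma~\ref{lem:small subprojections1}: the delicate point is that one cannot read a suitable $g$ directly off the cylinder data, and it is precisely the passage to a single dominating subprojection $e_{g',p'}$ that makes a point-evaluation $\iota(g')$ available.

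Finally, for the injectivity criterion I would determine when two point-evaluations agree. Evaluating the coset equivalence $g_1 \in g'\theta_p(G) \Leftrightarrow g_2 \in g'\theta_p(G)$ at $g' = g_1$ shows that $\iota(g_1) = \iota(g_2)$ forces $g_1^{-1}g_2 \in \bigcap_{p \in P}\theta_p(G)$; conversely, if $g_1^{-1}g_2$ lies in this intersection, the subgroup property of each $\theta_p(G)$ gives the coset equivalence for all $(g',p)$, whence $\iota(g_1) = \iota(g_2)$. Thus $\iota$ is injective if and only if $\bigcap_{p \in P}\theta_p(G) = \{1_G\}$, which is exactly minimality of $(G,P,\theta)$.
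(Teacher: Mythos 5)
Your proof is correct and follows essentially the same route as the paper: total disconnectedness and compactness from $\CD$ being unital and generated by commuting projections, the cylinder basis from the fact that products of the $e_{g,p}$ are again of that form, density via Lemma~\ref{lem:small subprojections1} applied to a nonempty cylinder set, and injectivity reduced to $\bigcap_{p\in P}\theta_p(G)=\{1_G\}$. Your explicit realisation of $\iota(g)$ as the diagonal vector state $d\mapsto\langle D\xi_g,\xi_g\rangle$ in the canonical representation is a small additional care (verifying that the membership criterion really defines a character) that the paper leaves implicit, but it does not change the argument.
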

\begin{proof}
$G_\theta$ is a totally disconnected, compact Hausdorff space since $\CD$ is a unital C*-algebra generated by commuting projections. The statement concerning the basis for the topology on $G_\theta$ follows from Lemma~\ref{lem:D as inductive limit}. To see that $\iota$ has dense image, let $\chi \in G_\theta$. As the cylinder sets form a basis for the topology of $G_\theta$, every open neighbourhood of $\chi$ contains a cylinder set $Z_{(g,p),(h_1,q_1),\dots,(h_n,q_n)}$ with $\chi \in Z_{(g,p),(h_1,q_1),\dots,(h_n,q_n)}$. This means that $e_{g,p}\prod_{i=1}^n (1-e_{h_i,q_i})$ is non-zero. Hence we can apply Lemma~\ref{lem:small subprojections1} to obtain $(g',p') \in G{\times}P$ satisfying $e_{g',p'} \leq e_{g,p}\prod_{i=1}^n (1-e_{h_i,q_i})$. In other words, $\iota(g') \in Z_{(g,p),(h_1,q_1),\dots,(h_n,q_n)}$, so $\iota(G)$ is a dense subset of $G_\theta$. Now given $g,h \in G$, we observe that $\iota(g)=\iota(h)$ is equivalent to $g^{-1}h \in \bigcap_{p \in P}\theta_p(G)$ because the cylinder sets form a basis of the topology on the Hausdorff space $G_\theta$. Therefore $\iota$ is injective precisely if $(G,P,\theta)$ is minimal.
\end{proof}

\begin{remark}\label{rem:IAD SpecD as completion}
By the preceding lemma, $G_\theta$ is a completion of $G$ with respect to $\theta$ whenever $(G,P,\theta)$ is minimal. 
\end{remark}

\noindent There is a canonical action $\tau$ of $G$ on $\CD$ given by $\tau_g(e_{h,p}) = e_{gh,p}$ for $g,h \in G$ and $p \in P$. Known results, as for instance \cite{CV}*{Lemma 2.5}, indicate that $\CD \rtimes_\tau G$ ought to be simple provided that the irreversible algebraic dynamical system $(G,P,\theta)$ is minimal. Of course, this can only be true if the $G$-action $\tau$ on $\CD$ is regular, that is, $\CD \rtimes_\tau G \cong \CD \rtimes_{\tau,r} G$ via the canonical map. Building on the results of \cite{AD}, this can be rephrased as amenability of the action $\hat{\tau}$ on $G_\theta$, see also \cite{BO}*{Theorem 4.4.3} for a concise exposition. Moreover, the map $\iota$ from Lemma~\ref{lem:IAD spec D as a completion of G} satisfies $\hat{\tau}_g(\iota(h)) = \iota(gh)$ for all $g,h \in G$.

If $\hat{\tau}$ is amenable, the celebrated result of \cite{AS} states that the crossed product $C(G_\theta)\rtimes_\tau G$ is simple if and only if the action $\hat{\tau}$ is minimal and topologically free. As it turns out, minimality of $(G,P,\theta)$ corresponds precisely to these two properties. For convenience, let us recall the standard definitions of topological freeness and minimality for group actions.

\begin{definition}\label{def:top free}
Let $X$ be a topological space and $G$ a group. A $G$-action on $X$ is said to be \emph{topologically free}, if the set $X^g = \{x \in X \mid g.x = x\}$ has empty interior for $g \in G\setminus\{1_G\}$.
\end{definition}

\begin{definition}\label{def:min}
Let $X$ be a topological space and $G$ a group. A $G$-action on $X$ is said to be \emph{minimal}, if the orbit $\CO(x) = \{g.x \mid g \in G\}$ is dense in $X$ for every $x \in X$.
\end{definition}

\noindent Equivalently, an action is minimal if the only invariant open (closed) subsets of $X$ are $\emptyset$ and $X$.

\begin{proposition}\label{prop:IAD min iff tau min+topfree}
If $(G,P,\theta)$ is an irreversible algebraic dynamical system, then the action $G$-action $\hat{\tau}$ on $G_\theta$ is minimal. It is topologically free if and only if $(G,P,\theta)$ is minimal.
\end{proposition}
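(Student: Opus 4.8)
The plan is to read off both assertions from the dense subset $\iota(G) \subset G_\theta$ and the cylinder basis of Lemma~\ref{lem:IAD spec D as a completion of G}, using that the dual action is determined on generators by $\hat\tau_g(\chi)(e_{m,r}) = \chi(e_{g^{-1}m,r})$. In particular $\hat\tau_g$ permutes the basic cylinders by translating their group coordinates, $\hat\tau_g Z_{(k,p),(h_1,q_1),\dots,(h_n,q_n)} = Z_{(gk,p),(gh_1,q_1),\dots,(gh_n,q_n)}$, and it satisfies $\hat\tau_g(\iota(h)) = \iota(gh)$, so that $G$ acts transitively on $\iota(G)$. Minimality will come from moving an arbitrary character into a prescribed cylinder, and topological freeness from comparing the fixed-point sets of $\hat\tau_g$ with the dense orbit $\iota(G)$.

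For minimality I would show that the orbit of an arbitrary $\chi \in G_\theta$ meets every nonempty basic open set $Z = Z_{(g_0,p_0),(h_1,q_1),\dots,(h_n,q_n)}$. Since $Z \neq \emptyset$, the projection $e_{g_0,p_0}\prod_i(1-e_{h_i,q_i})$ is nonzero, so Lemma~\ref{lem:small subprojections1} yields $(g_1,p_1)$ with $e_{g_1,p_1} \leq e_{g_0,p_0}\prod_i(1-e_{h_i,q_i})$, whence $Z_{(g_1,p_1)} \subseteq Z$. It then suffices to find $g$ with $\hat\tau_g(\chi)(e_{g_1,p_1}) = \chi(e_{g^{-1}g_1,p_1}) = 1$; if some coset $[k]$ satisfies $\chi(e_{k,p_1}) = 1$, then $g := g_1 k^{-1}$ works. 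When $[G:\theta_{p_1}(G)] < \infty$, relation (CNP~3) gives $1 = \sum_{[k]} e_{k,p_1}$, and since $\chi$ is multiplicative with $\chi(1) = 1$ and the $e_{k,p_1}$ are mutually orthogonal, exactly one coset has $\chi(e_{k,p_1}) = 1$. I expect the real difficulty to be precisely the infinite-index levels: when $[G:\theta_{p_1}(G)] = \infty$, relation (CNP~3) is unavailable, $\sum_{[k]} e_{k,p_1}$ is only a strong-operator limit of the identity, and a character of $\CD$ can vanish on every $e_{k,p_1}$. Thus the reduction above must be supplemented by an argument refining $(g_1,p_1)$ to a level on which $\chi$ is actually supported, uniformly in $\chi$; pushing the independence/irreversibility structure of Lemma~\ref{lem:small subprojections1} far enough to guarantee this is the main obstacle.

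For the equivalence with topological freeness, the forward implication is clean. Assume $(G,P,\theta)$ is minimal, so $\iota$ is injective by Lemma~\ref{lem:IAD spec D as a completion of G}. For $g \neq 1_G$ and any $h \in G$ we have $\hat\tau_g(\iota(h)) = \iota(gh)$, and injectivity forces $\iota(gh) \neq \iota(h)$; hence $\hat\tau_g$ fixes no point of the dense set $\iota(G)$. Since the fixed-point set $\{\chi \in G_\theta \mid \hat\tau_g(\chi) = \chi\}$ is closed and misses a dense set, it has empty interior, so $\hat\tau$ is topologically free in the sense of Definition~\ref{def:top free}.

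For the converse I would argue by contraposition: assume $(G,P,\theta)$ is not minimal and fix $g_0 \in N := \bigcap_{p \in P}\theta_p(G)$ with $g_0 \neq 1_G$, aiming to exhibit a nonempty open set on which $\hat\tau_{g_0}$ is the identity. Because $g_0 \in \theta_p(G)$ for every $p$, one checks $e_{g_0 m,p} = e_{m,p}$ whenever $m \in \theta_p(G)$ (then $m^{-1}g_0 m \in \theta_p(G)$), so $\hat\tau_{g_0}$ is trivial on the cosets meeting $\theta_p(G)$ at each level. If the images $\theta_p(G)$ are normal, as in the situation of Remark~\ref{rem:ind normal}, then $m^{-1}g_0 m \in \theta_p(G)$ for all $m$ and $p$, giving $e_{g_0 m,p} = e_{m,p}$ identically, hence $\hat\tau_{g_0} = \mathrm{id}$ and outright failure of topological freeness. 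I expect the non-normal case to be the main obstacle here: then $\hat\tau_{g_0}$ genuinely permutes cosets transverse to $N$, and in fact $\{\chi \mid \hat\tau_{g_0}(\chi)=\chi\} \cap \iota(G) = \iota(\{h \mid h^{-1}g_0 h \in N\})$ can be small, so one must either produce a single cylinder fixed pointwise across all levels simultaneously or pass to a better element of $N$; controlling the action on the infinitely many deep levels at once is the delicate step, and it is conceivable that this is where an additional hypothesis (such as normality of the images, or finite type) is really needed.
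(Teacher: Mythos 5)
The one direction you carry out in full --- minimality of $(G,P,\theta)$ implies topological freeness --- is correct and is exactly the paper's argument: for $g\neq 1_G$ the injectivity of $\iota$ forces $\hat{\tau}_g(\iota(h))=\iota(gh)\neq\iota(h)$, and a closed fixed-point set missing the dense set $\iota(G)$ has empty interior. The other two parts of your proposal stop at the obstacles you yourself flag, so as it stands this is not a complete proof. For minimality of $\hat{\tau}$, the paper's proof is the one-line observation that $\hat{\tau}$ restricts to left translation on the dense subset $\iota(G)$, i.e.\ it exhibits a single dense orbit; you instead attack the actual content of Definition~\ref{def:min} (every orbit dense), and the obstacle you locate at infinite-index levels is genuine. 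If $[G:\theta_p(G)]=\infty$, finitely many cosets of $\theta_p(G)$ never cover $G$, so in the canonical representation no finite sum $\sum_i d_ie_{k_i,p}$ is close to $1$; hence the ideal of $\CD$ generated by $\{e_{k,p}\}_{k\in G}$ is proper and there is a character $\chi$ with $\chi(e_{k,p})=0$ for all $k$. Such a $\chi$ is a fixed point of $\hat{\tau}$ whose orbit never meets $Z_{(g_1,p)}$, so the reduction via Lemma~\ref{lem:small subprojections1} cannot be completed there, and no supplementary argument will produce one. Your argument is complete and correct in the finite type case, where (CNP~3) supplies the needed coset; beyond that, a dense orbit is all one gets, which is what the paper's shorter argument actually establishes.

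For the converse of topological freeness your diagnosis is again accurate: the paper's proof rests on the assertion that $\tau_g=\mathrm{id}_\CD$ for every $g\in\bigcap_{p\in P}\theta_p(G)$, and $e_{gh,p}=e_{h,p}$ requires $h^{-1}gh\in\theta_p(G)$ for every $h\in G$ and $p\in P$, which is automatic when the images $\theta_p(G)$ are normal (or $G$ is abelian) but not in general. Your contrapositive is the same as the paper's up to this point; you have simply made the hidden conjugation-invariance explicit, and neither your proposal nor the paper closes the non-normal case, where one would have to exhibit a whole cylinder inside the fixed-point set of some $\hat{\tau}_g$. So the verdict is: your completed steps are correct and essentially coincide with the paper where they overlap, but the two places you leave open are real gaps in the proposal --- and they are precisely the places where the paper's own proof is terser than the statement warrants, so you should not expect to fill them by routine elaboration without an additional hypothesis such as finite type or normality of the $\theta_p(G)$.
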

\begin{proof}
On $\iota(G)$, which is dense in $G_\theta$ by Lemma~\ref{lem:IAD spec D as a completion of G}, $\hat{\tau}$ is simply given by translation from the left. Hence $\hat{\tau}$ is minimal. For the second part, we note that $\tau_g = \text{id}_\CD$ holds for every $g \in \bigcap_{p \in P} \theta_p(G)$. Thus, if $(G,P,\theta)$ is not minimal, there is $g \neq 1_G$ such that $G_\theta^g = G_\theta$, so $\hat{\tau}$ is not topologically free. If $(G,P,\theta)$ is minimal, then $\hat{\tau}$ acts freely on $\iota(G)$ because $\iota$ is injective and $G$ is left-cancellative. Since $\iota(G)$ is dense in $G_\theta$, we conclude that $\hat{\tau}$ is topologically free.
\end{proof}

\begin{corollary}\label{cor:D rtimes G simple iff IAD minimal}
The crossed product $\CD \rtimes_\tau G$ is simple if and only if $(G,P,\theta)$ is minimal and $\hat{\tau}$ is amenable.
\end{corollary}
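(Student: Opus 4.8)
The plan is to read $\CD \rtimes_\tau G$ as the crossed product attached to the topological dynamical system $(G_\theta, G, \hat\tau)$, using the identification $\CD \cong C(G_\theta)$ furnished by Lemma~\ref{lem:IAD spec D as a completion of G}, and then to feed the two structural facts already isolated in the excerpt into the standard simplicity dictionary for such crossed products. The first fact is Proposition~\ref{prop:IAD min iff tau min+topfree}: the action $\hat\tau$ is always minimal, and it is topologically free exactly when $(G,P,\theta)$ is minimal. The second is the equivalence recalled just before the statement, based on \cite{AD} (see also \cite{BO}*{Theorem 4.4.3}), between amenability of $\hat\tau$ and regularity of $\tau$, i.e. the canonical surjection $\CD \rtimes_\tau G \onto \CD \rtimes_{\tau,r} G$ being an isomorphism.

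For the implication ($\Leftarrow$), I would assume $(G,P,\theta)$ minimal and $\hat\tau$ amenable. Amenability identifies $\CD\rtimes_\tau G$ with the reduced crossed product $\CD\rtimes_{\tau,r}G$. Minimality of $(G,P,\theta)$ yields topological freeness of $\hat\tau$ via Proposition~\ref{prop:IAD min iff tau min+topfree}, and $\hat\tau$ is minimal by the same proposition. The Archbold--Spielberg criterion \cite{AS} then gives simplicity of the reduced crossed product of a minimal, topologically free system, so $\CD\rtimes_\tau G \cong \CD\rtimes_{\tau,r}G$ is simple.

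For the converse ($\Rightarrow$), I would assume $\CD\rtimes_\tau G$ simple and extract the two conditions separately. To get amenability, observe that the canonical surjection onto $\CD\rtimes_{\tau,r}G$ has a kernel which, being an ideal of a simple algebra, is either zero or everything; since the reduced crossed product is non-zero the kernel is proper, hence zero, so $\tau$ is regular and therefore $\hat\tau$ is amenable by the \cite{AD}-based equivalence. To get minimality of $(G,P,\theta)$, I would argue by contraposition directly, bypassing the Archbold--Spielberg converse: if $(G,P,\theta)$ were not minimal, the proof of Proposition~\ref{prop:IAD min iff tau min+topfree} supplies $g \neq 1_G$ with $\tau_g = \id_\CD$, so the normal subgroup $N := \ker(\tau\colon G \to \Aut(\CD))$ is non-trivial and acts trivially on $\CD$. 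Then $\tau$ factors through $G/N$, and universality gives a surjection $\CD\rtimes_\tau G \onto \CD\rtimes_{\bar\tau}(G/N)$ determined by $d \mapsto d$ and $u_g \mapsto u_{gN}$; its kernel is proper because the target is non-zero, yet it contains $u_g - 1 \neq 0$ for any $g \in N\setminus\{1_G\}$, contradicting simplicity. Hence $(G,P,\theta)$ is minimal.

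The main obstacle is the careful bookkeeping between the full and reduced crossed products, since Archbold--Spielberg is a statement about the reduced crossed product: amenability is precisely the hinge that transports its conclusion to the full crossed product in ($\Leftarrow$), and conversely lets simplicity of the full crossed product force regularity, hence amenability, in ($\Rightarrow$). The delicate point to verify is that the \cite{AD}-based correspondence really is an equivalence in this setting, as only the direction ``regular $\Rightarrow$ amenable'' is used in the converse and is the less routine one; by contrast, the minimality half of the converse is handled cleanly by the quotient-by-$N$ argument and requires neither amenability nor the Archbold--Spielberg theorem.
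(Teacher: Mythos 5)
Your argument is correct, and its skeleton coincides with the paper's: both directions hinge on the Anantharaman-Delaroche equivalence between amenability of $\hat{\tau}$ and regularity of the crossed product, on Proposition~\ref{prop:IAD min iff tau min+topfree}, and on the Archbold--Spielberg criterion. The one genuine divergence is in the converse. The paper simply invokes \cite{AS}*{Corollary following Theorem 2}, which for a discrete group acting on a commutative C*-algebra is an equivalence, so that simplicity of the (by then reduced) crossed product directly yields topological freeness and hence minimality of $(G,P,\theta)$. You instead bypass that half of \cite{AS} with an elementary quotient argument: failure of minimality produces, via the proof of Proposition~\ref{prop:IAD min iff tau min+topfree}, a nontrivial normal subgroup $N=\ker(\tau)$ acting trivially, and the induced surjection $\CD\rtimes_\tau G \onto \CD\rtimes_{\bar{\tau}}(G/N)$ has a nonzero proper kernel --- it contains $u_g-1$ for $g\in N\setminus\{1_G\}$, which is nonzero as one sees by composing with the canonical map onto the reduced crossed product and applying its faithful conditional expectation onto $\CD$. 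This buys a self-contained proof of that implication which does not use commutativity of $\CD$, at the cost of a little extra bookkeeping; the paper's route is shorter because the single cited corollary already packages the ``only if''. Your flagged worry about the direction ``regular implies amenable'' of \cite{AD} is legitimate, but it is exactly the form in which the paper itself uses that result, and the equivalence does hold for discrete groups acting on commutative unital C*-algebras.
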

\begin{proof}
Due to a central result from \cite{AD}, amenability of the action is equivalent to regularity of the crossed product. Hence Proposition~\ref{prop:IAD min iff tau min+topfree} and \cite{AS}*{Corollary following Theorem 2} establish the claim. 
\end{proof}

\begin{definition}\label{def:core}
The \emph{core} $\CF$ is the C*-subalgebra of $\CO[G,P,\theta]$ generated by $\CD$ and $(u_{g})_{g \in G}$.
\end{definition}

\begin{lemma}\label{lem:F dense span family}
 The linear span of $(u_gs_ps_p^*u_h^*)_{g,h \in G,p \in P}$ is dense in $\CF$.
\end{lemma}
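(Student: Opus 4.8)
The plan is to show that the linear span $\CS := \Span\{u_g s_p s_p^* u_h^* \mid g,h \in G,\ p \in P\}$ is a (non-closed) $*$-subalgebra of $\CO[G,P,\theta]$ whose closure is exactly $\CF$. First I would record the two easy containments. On the one hand, each spanning element lies in $\CF$: by definition $s_p s_p^* = e_{1_G,p} \in \CD$, while $u_g, u_h^* \in \CF$, so $u_g s_p s_p^* u_h^* \in \CF$ by Definition~\ref{def:core}; hence $\overline{\CS} \subseteq \CF$. On the other hand, taking $h=g$ gives $e_{g,p} \in \CS$, and taking $p=1_P$, where $s_{1_P}s_{1_P}^* = e_{1_G,1_P} = 1$ by (CNP 3), together with $h=1_G$ shows that the spanning element $u_g s_{1_P} s_{1_P}^* u_{1_G}^*$ equals $u_g$, so $u_g \in \CS$. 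Thus, once $\overline{\CS}$ is known to be a C*-algebra, it contains all the $e_{g,p}$ (hence $\CD$) and all $u_g$, whence $\CF \subseteq \overline{\CS}$ by Definition~\ref{def:core}.

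It remains to check that $\CS$ is closed under the two algebraic operations. Stability under the involution is immediate, since $(u_g s_p s_p^* u_h^*)^* = u_h s_p s_p^* u_g^*$. The substantive point is closure under multiplication, where (CNP 1) and (CNP 2) enter. The product of two spanning elements is $u_g\, s_p s_p^*\, u_{h^{-1}k}\, s_q s_q^*\, u_\ell^*$, so everything reduces to rewriting the middle term $s_p s_p^* u_m s_q s_q^*$ with $m := h^{-1}k$. Applying (CNP 2) to $s_p^* u_m s_q$, this term vanishes unless $m \in \theta_p(G)\theta_q(G)$, in which case I write $m = \theta_p(m_1)\theta_q(m_2)$ and get $s_p^* u_m s_q = u_{m_1} s_{(p\wedge q)^{-1}q} s_{(p\wedge q)^{-1}p}^* u_{m_2}$. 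Reinserting the outer $s_p$ and $s_q^*$ and moving the unitaries to the ends via (CNP 1) (in the form $s_p u_{m_1} = u_{\theta_p(m_1)} s_p$ and $u_{m_2} s_q^* = s_q^* u_{\theta_q(m_2)}$) leaves $u_{\theta_p(m_1)}\, s_p s_{(p\wedge q)^{-1}q}\, s_{(p\wedge q)^{-1}p}^* s_q^*\, u_{\theta_q(m_2)}$. Here I would invoke the lattice identity $p\vee q = pq\,(p\wedge q)^{-1}$ in the free abelian monoid $P$, which yields $s_p s_{(p\wedge q)^{-1}q} = s_{p\vee q}$ and $s_{(p\wedge q)^{-1}p}^* s_q^* = s_{p\vee q}^*$. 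Collecting the unitaries on either side then shows the product equals $u_{g\theta_p(m_1)}\, s_{p\vee q} s_{p\vee q}^*\, u_{\ell\theta_q(m_2)^{-1}}^*$, again a single spanning element (and $0$ in the complementary case $m \notin \theta_p(G)\theta_q(G)$). Hence $\CS$ is multiplicatively closed.

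Combining these observations, $\overline{\CS}$ is a C*-subalgebra of $\CO[G,P,\theta]$ that contains $\CD$ and every $u_g$ and is contained in $\CF$, so $\overline{\CS} = \CF$, which is the assertion. The only genuinely delicate step is the multiplication computation; the main thing to watch is the commutation bookkeeping via (CNP 1) together with the monoid identity for $p\vee q$ --- both of which already underlie Lemma~\ref{lem:e_(g,p) commute} and Remark~\ref{rem:O[G,P,theta]}, so no input beyond the defining relations is required.
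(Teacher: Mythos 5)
Your proof is correct and fills in exactly the kind of computation the paper has in mind: the published proof of this lemma simply defers to "the calculations for Lemma~\ref{lem:dense span}", which are the same (CNP~1)/(CNP~2) manipulations you carry out to show the span is a $*$-subalgebra containing $\CD$ and the $u_g$. Your multiplication bookkeeping, including the identity $s_ps_{(p\wedge q)^{-1}q}=s_{p\vee q}$ and the resulting single spanning element $u_{g\theta_p(m_1)}s_{p\vee q}s_{p\vee q}^*u_{\ell\theta_q(m_2)^{-1}}^*$, checks out.
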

\begin{proof}
This follows immediately from the calculations for Lemma~\ref{lem:dense span}.
\end{proof}

\begin{remark}\label{rem:E_1}
For every irreversible algebraic dynamical system $(G,P,\theta)$, $P$ is a discrete abelian Ore semigroup. Therefore its enveloping group $P^{-1}P$ is discrete abelian. Let us denote the dual group of $P^{-1}P$ by $L$, which is a compact abelian group by Pontryagin duality. Furthermore, $L$ acts on $\CO[G,P,\theta]$ via the so-called gauge-action $\gamma$ given by 
\[\gamma_{\ell}(u_{g}) = u_{g} \text{ and } \gamma_{\ell}(s_{p}) = \ell(p) s_{p}, \text{ for } g \in G,p\in P \text{ and } \ell \in L.\] 
\begin{enumerate}[a)]
\item The fixed-point algebra of $\gamma$ coincides with $\CF$.
\item If $\mu$ denotes the normalized Haar measure on $L$, then 
\[\begin{array}{c} E_{1}(u_gs_ps_q^*u_h^*) := \int\limits_{\ell \in L}{\gamma_{\ell}(u_gs_ps_q^*u_h^*) d\mu(\ell)} = \delta_{p q} u_gs_ps_p^*u_h^* \end{array}\]
defines a faithful conditional expectation $E_{1}: \CO[G,P,\theta] \longrightarrow \CF$ as $\gamma$ is strongly continuous.
\end{enumerate} 
\end{remark}

\noindent The similarity between $\CF$ and $\CD \rtimes_\tau G$ is apparent. If one assumes that $\CD \rtimes_\tau G$ is simple, which by Corollary~\ref{cor:D rtimes G simple iff IAD minimal} means that the irreversible algebraic dynamical system $(G,P,\theta)$ is minimal, it is easy to show that these two algebras are isomorphic. This strategy has been pursued in \cite{CV}*{Lemma~2.5}. 

However, we will show in Corollary~\ref{cor:F cong D rtimes G-general IAD} that this identification holds in full generality. To do so, we will first derive a semigroup crossed product description $\CO[G,P,\theta] \cong \CD \rtimes (G \rtimes_\theta P)$, which is of independent interest, compare \cite{CV}*{Theorem~2.6}. Also, if $(G,P,\theta)$ is of infinite type, that is, $[G:\theta_p(G)]$ is infinite for all $p \neq 1_P$, then this result reproduces the standard picture $C^*(S) \cong \CD_S \rtimes S$ for C*-algebras of left cancellative semigroups $S$ in the case where $S = G \rtimes_\theta P$, compare \cite{Li1}*{Lemma~2.14}.  

In order to get down to $\CF$ and $\CD \rtimes_\tau G$, respectively, we observe that a crossed product coming from a semidirect product of discrete semigroups can be displayed as an iterated semigroup crossed product under a certain condition, see Theorem~\ref{thm:cr prod by sd prod as it cr prod}. This condition will be satisfied as $G$ is a group, see Remark~\ref{rem:ext to proper endomorphisms for non-unital coeff algs}~b).

\begin{proposition}\label{prop:O cong D rtimes GxP}
Let $(v_{(g,p)})_{(g,p) \in G{\rtimes_\theta}P}$ denote the family of isometries in $\CD{\rtimes}(G{\rtimes_\theta}P)$ implementing the action of the semigroup $G\rtimes_\theta P$ on $\CD$ given by $(g,p).e_{h,q} = e_{g\theta_p(h),pq}$, that is, $v_{(g,p)}e_{h,q}v_{(g,p)}^* = e_{g\theta_p(h),pq}$. Then the map 
\[\begin{array}{rcl}
\CO[G,P,\theta] &\stackrel{\varphi}{\longrightarrow}& \CD{\rtimes}(G{\rtimes_\theta}P)\\
u_gs_p&\mapsto&v_{(g,p)}
\end{array}\] is an isomorphism.
\end{proposition}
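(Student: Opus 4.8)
The plan is to construct $\varphi$ via the universal property of $\CO[G,P,\theta]$ and to produce its inverse via the universal property of the semigroup crossed product. The easy direction is the inverse: keeping the inclusion $\CD \hookrightarrow \CO[G,P,\theta]$ and setting $V_{(g,p)} := u_g s_p$, one checks using (CNP 1) that $(g,p) \mapsto V_{(g,p)}$ is a representation of $G \rtimes_\theta P$ by isometries, since $V_{(g,p)}V_{(h,q)} = u_g u_{\theta_p(h)} s_p s_q = V_{(g\theta_p(h),pq)}$ and $V_{(g,p)}^*V_{(g,p)} = s_p^* s_p = 1$, and that the pair is covariant for the action $(g,p).e_{h,q} = e_{g\theta_p(h),pq}$, because $V_{(g,p)} e_{h,q} V_{(g,p)}^* = e_{g\theta_p(h),pq}$, again by (CNP 1). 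The universal property of the crossed product then yields a $*$-homomorphism $\psi \colon \CD \rtimes (G\rtimes_\theta P) \to \CO[G,P,\theta]$ with $\psi(v_{(g,p)}) = u_g s_p$ and $\psi|_\CD = \mathrm{id}$.

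For $\varphi$ itself I would set $u_g \mapsto v_{(g,1_P)}$ and $s_p \mapsto v_{(1_G,p)}$. Since $(g,1_P)$ is invertible in $G\rtimes_\theta P$, the $v_{(g,1_P)}$ are unitaries forming a representation of $G$, while the $v_{(1_G,p)}$ form a representation of $P$ by isometries. Relation (CNP 1) is immediate from the semigroup law, $v_{(1,p)} v_{(g,1)} = v_{(\theta_p(g),p)} = v_{(\theta_p(g),1)} v_{(1,p)}$, and (CNP 3) holds because each $e_{g,p}$ is sent to $e_{g,p} \in \CD$, where the relation $\sum_{[g]} e_{g,p} = 1$ already holds. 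The substance is (CNP 2). Writing $\varphi(s_p^* u_g s_q) = v_{(1,p)}^* v_{(g,q)}$ and inserting the range projections $v_{(1,p)}^* = v_{(1,p)}^* e_{1,p}$ and $v_{(g,q)} = e_{g,q} v_{(g,q)}$, Lemma~\ref{lem:e_(g,p) commute} gives $e_{1,p} e_{g,q} = 0$ whenever $g \notin \theta_p(G)\theta_q(G)$, producing the zero case. When $g = \theta_p(g_1)\theta_q(g_2)$, I would factor $v_{(g,q)} = v_{(\theta_p(g_1),1)} v_{(\theta_q(g_2),1)} v_{(1,q)}$ and push $v_{(1,p)}^*$ through using $v_{(1,p)}^* v_{(\theta_p(g_1),1)} = v_{(g_1,1)} v_{(1,p)}^*$ and $v_{(\theta_q(g_2),1)} v_{(1,q)} = v_{(1,q)} v_{(g_2,1)}$ (both consequences of the semigroup law), reducing everything to
\[ v_{(1,p)}^* v_{(g,q)} = v_{(g_1,1)}\, v_{(1,p)}^* v_{(1,q)}\, v_{(g_2,1)}. \]

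The crux is thus the single identity $v_{(1,p)}^* v_{(1,q)} = v_{(1,q')} v_{(1,p')}^*$ with $p' = (p\wedge q)^{-1}p$ and $q' = (p\wedge q)^{-1}q$; cancelling the common factor $v_{(1,p\wedge q)}$ reduces it to the doubly-commuting relation $v_{(1,p')}^* v_{(1,q')} = v_{(1,q')} v_{(1,p')}^*$ for relatively prime $p',q'$. I expect this to be the main obstacle, since such Nica-type commutation is not automatic in an abstract crossed product and is exactly where independence must enter. To establish it I would first use Lemma~\ref{lem:e_(g,p) commute} together with $\theta_{p'}(G) \cap \theta_{q'}(G) = \theta_{p'q'}(G)$ (condition (C) of Definition~\ref{def:IAD}) to get $e_{1,p'} e_{1,q'} = e_{1,p'q'}$, i.e. the Nica covariance $v_{(1,p')} v_{(1,p')}^* v_{(1,q')} v_{(1,q')}^* = v_{(1,p'q')} v_{(1,p'q')}^*$. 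From covariance, $v_{(1,q')} e_{1,p'} = e_{1,p'q'} v_{(1,q')} = e_{1,p'} v_{(1,q')}$, so $v_{(1,q')}$ commutes with $e_{1,p'}$. Then, with $A := v_{(1,p')}^* v_{(1,q')}$ and $B := v_{(1,q')} v_{(1,p')}^*$, commutativity of the isometries gives $v_{(1,p')} A = e_{1,p'} v_{(1,q')} = v_{(1,q')} e_{1,p'} = v_{(1,p')} B$, whence $A = B$. This proves (CNP 2), so $\varphi$ exists. Finally, $\psi$ and $\varphi$ are mutually inverse on generators ($\psi\varphi(u_g) = u_g$, $\psi\varphi(s_p) = s_p$, $\varphi\psi(v_{(g,p)}) = v_{(g,1)} v_{(1,p)} = v_{(g,p)}$, and both fix $\CD$ pointwise), so $\varphi$ is an isomorphism with inverse $\psi$.
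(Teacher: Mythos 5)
Your proposal is correct and follows essentially the same route as the paper: both directions are obtained from the respective universal properties, with the only substantive step being the verification of (CNP~2) for the $v_{(g,p)}$, which in both arguments ultimately reduces via Lemma~\ref{lem:e_(g,p) commute} to the behaviour of the range projections $e_{g,p}$. The paper massages (CNP~2) directly into the projection identity $e_{1_G,p}e_{g,q}=\chi_{\theta_p(G)\theta_q(G)}(g)\,e_{g\theta_q(g_2^{-1}),p\vee q}$, whereas you first strip off the group unitaries and isolate the doubly-commuting relation $v_{(1,p')}^*v_{(1,q')}=v_{(1,q')}v_{(1,p')}^*$ for relatively prime $p',q'$ — a cosmetic rather than substantive difference.
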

\begin{proof}
Recall from Definition~\ref{def:O-algebra ad-hoc} that $\CO[G,P,\theta]$ is the universal C*-algebra generated by a unitary representation $(u_g)_{g \in G}$ of the group $G$ and a semigroup of isometries $(s_p)_{p \in P}$ subject to the relations (CNP 1)--(CNP 3). Hence, in order to show that $\varphi$ defines a surjective $\ast$-homomorphism, it suffices to show that for every $g \in G$, the isometry $v_{(g,1_P)}$ is a unitary, and that the families $(v_{(g,1_P)})_{g \in G},(v_{(1_G,p)})_{p \in P}$ satisfy (CNP 1)--(CNP 3):
\[\begin{array}{lrcl}
&\hspace*{-6mm}v_{(g,1_P)}v_{(g^{-1},1_P)}&\hspace*{-2.5mm}=&\hspace*{-2.5mm} v_{(g,1_P)(g^{-1},1_P)} = v_{(1_G,1_P)} = 1 \vspace*{2mm}\\
\text{(CNP 1)}&\hspace*{-6mm} v_{(1_G,p)}v_{(g,1_P)} &\hspace*{-2.5mm}=&\hspace*{-2.5mm} v_{(1_G,p)(g,1_P)} = v_{(\theta_p(g),p)} = v_{(\theta_p(g),1_P)}v_{(1_G,p)}\vspace*{2mm}\\
\text{(CNP 2)}&\hspace*{-6mm} v_{(1_G,p)}^*v_{(g,1_P)}v_{(1_G,q)} &\hspace*{-2.5mm}\stackrel{!}{=}&\hspace*{-2.5mm} \chi_{\theta_p(G)\theta_q(G)}(g)~v_{(g_1,(p{\wedge}q)^{-1}q)}v_{(g_2^{-1},(p{\wedge}q)^{-1}p)}^*\\
&\hspace*{-6mm}&&\hspace*{-2.5mm}\text{where } g = \theta_p(g_1)\theta_q(g_2).\vspace*{2mm}\\
\Longleftrightarrow&\hspace*{-6mm} v_{(1_G,p)}v_{(1_G,p)}^*v_{(g,q)}v_{(g,q)}^* &\hspace*{-2.5mm}\stackrel{!}{=}&\hspace*{-2.5mm} \chi_{\theta_p(G)\theta_q(G)}(g)~v_{(\theta_p(g_1),p{\vee}q)}v_{(g\theta_q(g_2^{-1}),p{\vee}q)}^*\vspace*{2mm}\\
\Longleftrightarrow&\hspace*{-6mm} e_{1_G,p}e_{g,q} &\hspace*{-2.5mm}\stackrel{!}{=}&\hspace*{-2.5mm} \chi_{\theta_p(G)\theta_q(G)}(g)~e_{(g\theta_q(g_2^{-1}),p{\vee}q)}
\end{array} \]  
as $g = \theta_p(g_1)\theta_q(g_2)$ gives $\theta_p(g_1) = g\theta_q(g_2^{-1})$. This last equation holds by Lemma~\ref{lem:e_(g,p) commute}, so (CNP 2) is satisfied as well. (CNP 3) is a relation that is encoded inside $\CD$, so it is satisfied as the range projection of the isometry $v_{(g,p)}$ coincides with $e_{g,p}$. Injectivity of $\varphi$ follows from the fact that the isometries $u_gs_p$ satisfy the covariance relation for the action of $G\rtimes_\theta P$ on $\CD$ since $u_gs_p e_{h,q} (u_gs_p)^* = e_{g\theta_p(h),pq} = (g,p).e_{h,q}$. Indeed, in this case there is a surjective $*$-homomorphism from $\CD{\rtimes}(G{\rtimes_\theta}P)$ to $\CO[G,P,\theta]$ sending $v_{(g,p)}$ to $u_gs_p$ and the two $*$-homomorphisms are mutually inverse, so $\varphi$ is an isomorphism.  
\end{proof}

\noindent This description of $\CO[G,P,\theta]$ allows us to deduce several relevant properties of $\CO[G,P,\theta]$ and its core subalgebra $\CF$.

\begin{corollary}\label{cor:F cong D rtimes G-general IAD}
The isomorphism $\varphi$ from Proposition~\ref{prop:O cong D rtimes GxP} restricts to an isomorphism between $\CF$ and $\CD{\rtimes}G$. In particular, we have a canonical isomorphism $\CO[G,P,\theta] \cong \CF{\rtimes}P$.
\end{corollary}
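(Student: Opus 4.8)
The plan is to feed the isomorphism $\varphi$ from Proposition~\ref{prop:O cong D rtimes GxP} into the decomposition theorem for crossed products by semidirect products of monoids (Theorem~\ref{thm:cr prod by sd prod as it cr prod}), whose hypothesis is met here precisely because $G$ is a group, as recorded in Remark~\ref{rem:ext to proper endomorphisms for non-unital coeff algs}~b). Applied to the unital coefficient algebra $\CD$, the group $G$, and the free abelian monoid $P$ with the semidirect structure $G \rtimes_\theta P$, this theorem supplies an isomorphism
\[\Psi: \CD \rtimes (G \rtimes_\theta P) \stackrel{\cong}{\longrightarrow} (\CD \rtimes_\tau G) \rtimes P\]
carrying the isometry $v_{(g,p)} = v_{(g,1_P)}v_{(1_G,p)}$ to $w_g t_p$, where $(w_g)_{g \in G}$ denotes the unitaries implementing $\tau$ inside $\CD \rtimes_\tau G$ and $(t_p)_{p \in P}$ the isometries implementing the induced $P$-action.

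First I would record how the generators move. Since $\varphi(e_{g,p}) = v_{(g,p)}v_{(g,p)}^* = e_{g,p}$, the map $\varphi$ restricts to the identity on $\CD$, and because $s_{1_P} = 1$ we have $\varphi(u_g) = v_{(g,1_P)}$. Composing with $\Psi$ then yields an isomorphism $\Phi := \Psi \circ \varphi \colon \CO[G,P,\theta] \to (\CD \rtimes_\tau G) \rtimes P$ which restricts to the canonical inclusion of $\CD$ and satisfies $\Phi(u_g) = w_g$ and $\Phi(s_p) = t_p$. By Definition~\ref{def:core} the core is $\CF = C^*(\CD,(u_g)_{g \in G})$, so $\Phi(\CF) = C^*(\CD,(w_g)_{g \in G})$, and this is exactly the coefficient algebra $\CD \rtimes_\tau G$ sitting inside the semigroup crossed product $(\CD \rtimes_\tau G) \rtimes P$. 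As $\Phi$ is an isomorphism of the ambient algebras, its restriction to $\CF$ is an isomorphism onto $\CD \rtimes_\tau G$, which is the first assertion (equivalently $\CF \cong C(G_\theta) \rtimes_\tau G$ via Lemma~\ref{lem:IAD spec D as a completion of G}). The second assertion is then immediate: $\Phi$ already identifies $\CO[G,P,\theta]$ with $(\CD \rtimes_\tau G) \rtimes P$, and substituting the isomorphism $\CF \cong \CD \rtimes_\tau G$ just obtained produces the canonical isomorphism $\CO[G,P,\theta] \cong \CF \rtimes P$.

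The substance of the argument is packaged entirely in the decomposition theorem, which is what guarantees that $\CD \rtimes_\tau G$ is \emph{faithfully} represented as the coefficient algebra of $(\CD \rtimes_\tau G) \rtimes P \cong \CD \rtimes (G \rtimes_\theta P)$; without it, $C^*(\CD,(v_{(g,1_P)})_{g \in G})$ would only be known to be a quotient of the full crossed product $\CD \rtimes_\tau G$, not isomorphic to it. Hence the main point is to have verified the hypothesis of Theorem~\ref{thm:cr prod by sd prod as it cr prod}, which here reduces to the invertibility of the elements of $G$, so that the implementing isometries $v_{(g,1_P)}$ are unitaries. Once this is in place, the remainder is the purely bookkeeping task of tracking $u_g$, $s_p$, and $e_{g,p}$ through $\varphi$ and $\Psi$, and I expect no further obstacle.
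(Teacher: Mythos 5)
Your argument is correct and follows essentially the same route as the paper: the paper likewise obtains the first claim by combining Proposition~\ref{prop:O cong D rtimes GxP} with Theorem~\ref{thm:cr prod by sd prod as it cr prod} (whose hypothesis holds by Remark~\ref{rem:ext to proper endomorphisms for non-unital coeff algs}~b) because $G$ is a group), and then reads off $\CO[G,P,\theta]\cong\CF\rtimes P$ from the resulting identification of $\CF$ with the coefficient algebra of the iterated crossed product. Your generator-tracking is accurate, and the one delicate point you flag — that $C^*(\CD,(w_g)_{g\in G})$ is a priori only a quotient of the full crossed product $\CD\rtimes_\tau G$ — is handled at the same level of detail as in the paper, namely by appeal to the decomposition theorem.
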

\begin{proof}
The first claim follows immediately from Proposition~\ref{prop:O cong D rtimes GxP} together with Theorem~\ref{thm:cr prod by sd prod as it cr prod} and Remark~\ref{rem:ext to proper endomorphisms for non-unital coeff algs}. The second assertion is implied by Lemma~\ref{lem:F dense span family}.
\end{proof}

\begin{proposition}\label{prop:F nuclear and UCT}
If the $G$-action $\hat{\tau}$ on $G_\theta$ is amenable, then both $\CF$ and $\CO[G,P,\theta]$ are nuclear and satisfy the universal coefficient theorem (UCT).
\end{proposition}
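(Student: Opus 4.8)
The plan is to prove the statement first for the core $\CF$, where the amenability hypothesis enters directly, and then to bootstrap to $\CO[G,P,\theta]$ through the iterated crossed product decomposition of Corollary~\ref{cor:F cong D rtimes G-general IAD}.

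\emph{Step 1: realise $\CF$ as a transformation groupoid C*-algebra.} By Corollary~\ref{cor:F cong D rtimes G-general IAD} we have $\CF \cong \CD \rtimes_\tau G$, and Lemma~\ref{lem:IAD spec D as a completion of G} identifies $\CD \cong C(G_\theta)$ with $G_\theta$ a second countable, totally disconnected, compact Hausdorff space. Hence $\CF \cong C(G_\theta) \rtimes_{\hat\tau} G$ is the full crossed product of the action $\hat\tau$, which is the C*-algebra of the étale, Hausdorff, second countable transformation groupoid $\CG := G_\theta \rtimes G$. Amenability of the action $\hat\tau$ is, by the results of Anantharaman-Delaroche and Renault, precisely amenability of $\CG$; in particular $\hat\tau$ is regular, so that the full and reduced crossed products agree (compare \cite{AD} and \cite{BO}).

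\emph{Step 2: nuclearity and the UCT for $\CF$.} Since $\CG$ is amenable, $\CF = C^*(\CG)$ is nuclear by Anantharaman-Delaroche, see \cite{AD}, \cite{BO}. For the UCT I would invoke the theorem of J.-L.\ Tu to the effect that the C*-algebra of a second countable, Hausdorff, amenable groupoid lies in the bootstrap class; applied to $\CG$ this yields that $\CF$ satisfies the UCT. I expect this to be the main obstacle and the crux of the argument: the group $G$ is \emph{not} assumed amenable, so one cannot feed $\CF$ into a Pimsner--Voiculescu sequence or use closure of the bootstrap class under crossed products by amenable groups. The amenability assumption is available only at the level of the action, and Tu's theorem is exactly the device that converts amenability of $\hat\tau$ into membership of $\CF$ in the bootstrap class.

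\emph{Step 3: passage to $\CO[G,P,\theta]$.} By Corollary~\ref{cor:F cong D rtimes G-general IAD} we have $\CO[G,P,\theta] \cong \CF \rtimes P$. Since $P$ is a free abelian, hence cancellative abelian (Ore), monoid, Laca's dilation theory realises $\CF \rtimes P$ as a full corner of the group crossed product $\widetilde{\CF} \rtimes Q$, where $Q = P^{-1}P$ is the countable free abelian (hence amenable) enveloping group and $\widetilde{\CF} = \varinjlim_{p \in P} \CF$. The inductive limit $\widetilde{\CF}$ of copies of the nuclear UCT algebra $\CF$ is again nuclear and, as the bootstrap class is closed under countable inductive limits, satisfies the UCT. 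Writing $Q = \varinjlim \Z^{n_k}$ as an increasing union of finitely generated free abelian subgroups, one has $\widetilde{\CF} \rtimes Q = \varinjlim (\widetilde{\CF} \rtimes \Z^{n_k})$; iterating the Pimsner--Voiculescu sequence places each $\widetilde{\CF} \rtimes \Z^{n_k}$ in the bootstrap class, and the remaining inductive limit stays there, while amenability of $Q$ preserves nuclearity. Finally, nuclearity and the UCT pass to full corners by Morita equivalence, so $\CO[G,P,\theta]$ is nuclear and satisfies the UCT.
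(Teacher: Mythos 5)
Your proposal is correct and follows essentially the same route as the paper: $\CF \cong \CD \rtimes_\tau G$ is identified with the C*-algebra of the amenable transformation groupoid $G_\theta \rtimes G$, nuclearity comes from Anantharaman-Delaroche and the UCT from Tu's theorem, and the statement for $\CO[G,P,\theta] \cong \CF \rtimes P$ is then deduced from closure properties of the class of separable nuclear UCT algebras. Your Step~3 merely unpacks, via Laca's dilation to a full corner of $\widetilde{\CF}\rtimes P^{-1}P$, the closure under crossed products by $\N$ and inductive limits that the paper invokes in one sentence, so the two arguments coincide in substance.
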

\begin{proof}
As $\CF \cong \CD\rtimes G$ by Corollary~\ref{cor:F cong D rtimes G-general IAD} and $\hat{\tau}$ is amenable, $\CF$ is nuclear by results of Claire Anatharaman-Delaroche, see \cite{AD} or \cite{BO}*{Theorem 4.3.4}. Similarly, amenability of $\hat{\tau}$ passes to the corresponding transformation groupoid $\CG$. Thus, we can rely on results of Jean-Louis Tu, see \cite{Tu}, to deduce that $\CF \cong \CD \rtimes_\tau G \cong C^*(\CG)$ satisfies the UCT. The class of separable nuclear C*-algebras that satisfy the UCT is closed under crossed products by $\N$ and inductive limits. Recall that either $P \cong \N^k$ for some $k \in \N$ or $P \cong \bigoplus_\N \N$ according to condition (B) of Definition~\ref{def:IAD}. Hence the claims concerning $\CO[G,P,\theta]$ follow from $\CO[G,P,\theta] \cong \CF \rtimes P$, see Corollary~\ref{cor:F cong D rtimes G-general IAD}. 
\end{proof}

\begin{corollary}\label{cor:E_2}
The map $E_{2}(u_gs_ps_p^*u_h^*) := \delta_{g h}~e_{g,p}$ defines a conditional expectation $E_2: \CF \longrightarrow \CD$ which is faithful if and only if $\hat{\tau}$ is amenable. 
\end{corollary}
\begin{proof}
Due to Corollary~\ref{cor:F cong D rtimes G-general IAD}, $\CF$ is canonically isomorphic to $\CD \rtimes_\tau G$. Since $G$ is discrete, the reduced crossed product $\CD \rtimes_{\tau,r} G$ has a faithful conditional expectation given by evaluation at $1_G$. The map $E_2$ is nothing but the composition of 
\[\CF \cong \CD\rtimes_\tau G \onto \CD\rtimes_{\tau,r}G \stackrel{ev_{1_G}}{\longrightarrow} \CD.\]
By \cite{AD}, the canonical surjection $\CD \rtimes_\tau G \onto \CD \rtimes_{r,\tau} G$ is an isomorphism if and only if $\hat{\tau}$ is amenable. 
\end{proof}

\begin{corollary}\label{cor:cond exp O->D}
The map $E(u_gs_ps_q^*u_h^*) := \delta_{p q} \delta_{g h} e_{g,p}$ defines a conditional expectation $E: \CO[G,P,\theta] \longrightarrow \CD$ which is faithful if and only if $\hat{\tau}$ is amenable.
\end{corollary}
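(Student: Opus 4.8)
The plan is to realize $E$ as the composition of the two conditional expectations we have already constructed, namely $E_1 \colon \CO[G,P,\theta] \to \CF$ from Remark~\ref{rem:E_1} and $E_2 \colon \CF \to \CD$ from Corollary~\ref{cor:E_2}. First I would set $E := E_2 \circ E_1$ and verify on the dense spanning family of Lemma~\ref{lem:dense span} that this composition agrees with the claimed formula. Starting from a spanning element $u_g s_p s_q^* u_h^*$, applying $E_1$ produces $\delta_{pq}\, u_g s_p s_p^* u_h^*$ (this is exactly the formula recorded in Remark~\ref{rem:E_1}~b)); then applying $E_2$ to the result, using the formula $E_2(u_g s_p s_p^* u_h^*) = \delta_{gh}\, e_{g,p}$ from Corollary~\ref{cor:E_2}, yields $\delta_{pq}\delta_{gh}\, e_{g,p}$, which is precisely the stated formula for $E$.

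Once this identification is established, all the remaining assertions follow formally. The composition of two conditional expectations is again a conditional expectation: $E_1$ is a $\CF$-bimodule map that is the identity on $\CF \supset \CD$, and $E_2$ is a $\CD$-bimodule map that is the identity on $\CD$, so $E = E_2 \circ E_1$ is completely positive, contractive, and restricts to the identity on $\CD$; moreover its range is $\CD$ and it is $\CD$-bilinear. This gives that $E$ is a conditional expectation onto $\CD$ without any further computation.

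The faithfulness claim is where the two hypotheses interact, and I expect this to be the main point requiring care. The key observation is that $E_1$ is \emph{always} faithful, independently of any amenability assumption, since the gauge action $\gamma$ is strongly continuous and averaging a strictly positive element over the compact group $L$ against Haar measure cannot produce zero (this is the content of Remark~\ref{rem:E_1}~b)). Consequently the faithfulness of $E = E_2 \circ E_1$ is governed entirely by the faithfulness of $E_2$. By Corollary~\ref{cor:E_2}, $E_2$ is faithful if and only if $\hat{\tau}$ is amenable. Therefore, if $\hat{\tau}$ is amenable, then for $0 \le a$ with $E(a) = 0$ we have $E_2(E_1(a)) = 0$ with $E_1(a) \ge 0$, so $E_1(a) = 0$ by faithfulness of $E_2$, and then $a = 0$ by faithfulness of $E_1$. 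Conversely, if $\hat{\tau}$ is not amenable, one produces a nonzero positive element of $\CF = \CF \subset \CO[G,P,\theta]$ annihilated by $E_2$; since $E$ restricted to $\CF$ equals $E_2$ (because $E_1$ fixes $\CF$), this same element witnesses the failure of faithfulness of $E$. The only subtlety to check carefully is this last compatibility, namely that $E|_{\CF} = E_2$, which is immediate from $E_1|_{\CF} = \id_{\CF}$.
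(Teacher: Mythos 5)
Your proposal is correct and follows exactly the paper's argument: the paper's proof is the one-line observation that $E = E_2 \circ E_1$, with the conditional expectation property and the faithfulness equivalence then imported from Remark~\ref{rem:E_1} and Corollary~\ref{cor:E_2}. You have simply spelled out the details (verification on the spanning family, faithfulness of $E_1$ via the Haar average, and the reduction of faithfulness of $E$ to that of $E_2$) that the paper leaves implicit.
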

\begin{proof}
Clearly, $E = E_{2} \circ E_{1}$, so the result follows from Remark~\ref{rem:E_1} and Corollary~\ref{cor:E_2}. 
\end{proof}

\noindent Note that if $G$ happens to be amenable, the faithful conditional expectation $E$ can be obtained directly by showing that the left Ore semigroup $G{\rtimes_\theta}P$ has an amenable enveloping group. Before we can turn to simplicity of $\CO[G,P,\theta]$, we need the following general observations: 

\begin{definition}
Given a family of commuting projections $(E_{i})_{i \in I}$ in a unital C*-algebra $B$ and finite subsets $A \subset F$ of $I$, let
\[\begin{array}{c} Q_{F,A}^{E} := \prod\limits_{i \in A}E_{i}\prod\limits_{j \in F \setminus A}(1-E_{j}). \end{array}\]
Products indexed by $\emptyset$ are treated as $1$ by convention.
\end{definition}

\begin{lemma}\label{lem:first Qs}
Suppose $(E_{i})_{i \in I}$ is a family of commuting projections in a unital C*-algebra $B$, $A \subset F$ are finite subsets of $I$. Then each $Q_{F,A}^{E}$ is a projection, $\sum_{A \subset F}Q_{F,A}^{E} = 1$ and, for all $\lambda_{i} \in \C, i \in F$, we have
\[\begin{array}{rcl}\sum\limits_{i \in F}\lambda_{i}E_{i}=\sum\limits_{A\subset F}\Big(\sum\limits_{i\in A}\lambda_{i}\Big) Q_{F,A}^{E} &\text{ and }& \Big\|\sum\limits_{i\in F}\lambda_{i}E_{i}\Big\|=\max\limits_{\substack{A \subset F \\ Q_{F,A}^{E} \neq 0}}\big|\sum\limits_{i\in A}\lambda_{i}\big|.
\end{array}\]
\end{lemma}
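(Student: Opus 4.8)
The plan is to exploit that the $E_i$ commute, so that the finitely many projections involved generate a commutative unital C*-subalgebra of $B$, on which all three assertions can be read off directly. First I would record that a product of commuting projections is again a projection: if $p,q$ are commuting projections then $(pq)^*=qp=pq$ and $(pq)^2=p^2q^2=pq$, so $pq$ is a projection. Since each $1-E_j$ is a projection commuting with every $E_i$, induction shows that each $Q_{F,A}^{E}$ is a projection. The partition-of-unity identity $\sum_{A\subset F}Q_{F,A}^{E}=1$ then follows by expanding the telescoping product $\prod_{i\in F}\bigl(E_i+(1-E_i)\bigr)=\prod_{i\in F}1=1$: distributing over the finite product, the term in which one selects $E_i$ for $i\in A$ and $1-E_i$ for $i\in F\setminus A$ is precisely $Q_{F,A}^{E}$, and the subsets $A\subset F$ index exactly these terms.

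The key computational observation is that, for $i\in F$, multiplication by $E_i$ acts on the pieces by $E_iQ_{F,A}^{E}=Q_{F,A}^{E}$ when $i\in A$ and $E_iQ_{F,A}^{E}=0$ when $i\in F\setminus A$; the first uses $E_i^2=E_i$ together with commutativity, the second uses $E_i(1-E_i)=0$. Combining this with $\sum_{A\subset F}Q_{F,A}^{E}=1$ gives, for each $i\in F$,
\[
E_i \;=\; E_i\sum_{A\subset F}Q_{F,A}^{E}\;=\;\sum_{A\subset F,\,i\in A}Q_{F,A}^{E}.
\]
Multiplying by $\lambda_i$, summing over $i\in F$, and interchanging the two finite sums yields the first displayed identity, with the scalar $\sum_{i\in A}\lambda_i$ appearing in front of $Q_{F,A}^{E}$.

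For the norm formula I would first note that the $Q_{F,A}^{E}$ are mutually orthogonal: if $A\neq A'$, pick $i$ lying in exactly one of them, say $i\in A\setminus A'$, so $Q_{F,A}^{E}$ carries the factor $E_i$ while $Q_{F,A'}^{E}$ carries $1-E_i$, whence their product vanishes. Thus $\sum_{i\in F}\lambda_iE_i$ is a linear combination of mutually orthogonal projections summing to $1$. Passing to the commutative unital C*-algebra generated by $\{E_i\mid i\in F\}$ and its Gelfand isomorphism with some $C(X)$, each nonzero $Q_{F,A}^{E}$ becomes the indicator of a nonempty clopen set $X_A$, these sets partition $X$, and $\sum_{i\in F}\lambda_iE_i$ becomes the locally constant function taking the value $\sum_{i\in A}\lambda_i$ on $X_A$; its supremum norm is the asserted maximum over those $A$ with $Q_{F,A}^{E}\neq 0$. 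Equivalently, one computes $\|x\|^2=\|x^{*}x\|$ with $x^{*}x=\sum_{A\subset F}\bigl|\sum_{i\in A}\lambda_i\bigr|^{2}Q_{F,A}^{E}$ and reads off the spectral radius directly.

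I expect no serious obstacle here, as this is a routine structural lemma about commuting projections. The only point requiring a little care is the norm computation: one must restrict the maximum to those $A$ for which $Q_{F,A}^{E}\neq 0$, since the vanishing pieces contribute nothing to the function on $X$. The orthogonal-decomposition and Gelfand picture makes exactly this restriction transparent.
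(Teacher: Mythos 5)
Your proof is correct and follows essentially the same route as the paper: the paper likewise checks that products of commuting projections are projections, obtains the partition of unity by expanding $1=\prod_{i\in F}(E_i+1-E_i)$, and then declares the two displayed equations to "follow immediately." Your write-up simply supplies the details the paper leaves implicit (mutual orthogonality of the $Q_{F,A}^{E}$ and the resulting norm computation), and these details are all accurate.
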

\begin{proof}
Since the projections $E_{i}$ commute, $Q_{F,A}^{E}$ is a projection. The second assertion is obtained via $1 = \prod_{i \in F}{(E_{i} + 1- E_{i})}  = \sum_{A \subset F}Q_{F,A}^{E}$. The two equations from the claim follow immediately from this.
\end{proof}

\begin{lemma}\label{lem:small subprojections2}
For $d = \sum_{i=1}^n{\lambda_{i} e_{g_{i},p_{i}}} \in \CD_+$ with $\lambda_{i} \in \C$ and $(g_{i},p_{i}) \in G\times P$, there exist $(g,p) \in G\times P$ satisfying $de_{g,p} = \|d\|e_{g,p}$.   
\end{lemma}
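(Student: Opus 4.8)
The plan is to reduce the statement to the spectral decomposition of $d$ provided by Lemma~\ref{lem:first Qs}, and then to replace the product projection that naturally carries the top spectral value by a single generator $e_{g,p}$ by means of Lemma~\ref{lem:small subprojections1}. First I would set $E_i := e_{g_i,p_i}$ and $F := \{1,\dots,n\}$, so that $(E_i)_{i \in F}$ is a family of commuting projections by Lemma~\ref{lem:e_(g,p) commute}. Applying Lemma~\ref{lem:first Qs}, I would write $d = \sum_{A \subset F} c_A\, Q^E_{F,A}$ with $c_A := \sum_{i \in A}\lambda_i$, where the $Q^E_{F,A}$ are mutually orthogonal projections summing to $1$, and $\|d\| = \max\{|c_A| : Q^E_{F,A} \neq 0\}$. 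Since $d \in \CD_+$ and the nonzero $Q^E_{F,A}$ form an orthogonal family with $d\,Q^E_{F,A} = c_A\,Q^E_{F,A}$, each such $c_A$ is a nonnegative real number; hence $\|d\| = \max\{c_A : Q^E_{F,A}\neq 0\}$, and this maximum is attained at some $A^*$ with $Q^E_{F,A^*} \neq 0$ and $c_{A^*} = \|d\|$.

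If $\|d\| = 0$, then $d = 0$ and the pair $(1_G,1_P)$, for which $e_{1_G,1_P} = 1$, trivially satisfies $d\,e_{1_G,1_P} = 0 = \|d\|\,e_{1_G,1_P}$; so I may assume $\|d\| > 0$, which forces $A^* \neq \emptyset$ because $c_\emptyset = 0$. As $Q^E_{F,A^*} \le \prod_{i \in A^*} e_{g_i,p_i}$ is nonzero, the product $\prod_{i \in A^*} e_{g_i,p_i}$ is itself nonzero; iterating Lemma~\ref{lem:e_(g,p) commute} then shows that a nonzero finite product of the generating projections is again of the form $e_{g',p'}$ for a suitable $(g',p') \in G\times P$. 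Consequently $Q^E_{F,A^*} = e_{g',p'}\prod_{j \in F\setminus A^*}(1-e_{g_j,p_j})$ is a nonzero projection of exactly the shape to which Lemma~\ref{lem:small subprojections1} applies.

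Invoking Lemma~\ref{lem:small subprojections1} with $e_{g',p'}$ and the finite defect set $\{(g_j,p_j) : j \in F\setminus A^*\}$ yields $(g,p) \in G \times P$ with $e_{g,p} \le Q^E_{F,A^*}$, i.e. $Q^E_{F,A^*}\,e_{g,p} = e_{g,p}$. Using orthogonality of the $Q^E_{F,A}$ together with $d\,Q^E_{F,A^*} = \|d\|\,Q^E_{F,A^*}$, I would conclude
\[
d\,e_{g,p} = d\,Q^E_{F,A^*}\,e_{g,p} = \|d\|\,Q^E_{F,A^*}\,e_{g,p} = \|d\|\,e_{g,p},
\]
as desired. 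The only genuinely nontrivial step is the passage from the product projection $Q^E_{F,A^*}$, which carries the top spectral value $\|d\|$ but is not itself one of the generators, to a single subprojection $e_{g,p}$ of the required form; this is precisely the content of Lemma~\ref{lem:small subprojections1}, where the essential work has already been carried out, and everything else is the routine spectral bookkeeping for commuting projections supplied by Lemma~\ref{lem:first Qs}.
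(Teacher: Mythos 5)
Your proof is correct and follows essentially the same route as the paper: apply Lemma~\ref{lem:first Qs} to locate a nonzero $Q^{e}_{F,A}$ on which $d$ acts as $\|d\|$, identify $\prod_{i\in A}e_{g_i,p_i}$ with a single projection $e_{g',p'}$ via Lemma~\ref{lem:e_(g,p) commute}, and then pass to a subprojection of the required form via Lemma~\ref{lem:small subprojections1}. The only difference is that you spell out the positivity bookkeeping and the degenerate case $d=0$, which the paper leaves implicit.
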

\begin{proof}
$d$ is contained in $C^*(\{Q_{F,A}^e \mid A \subset F = \{(g_{i},p_{i}) \mid 1 \leq i \leq n\}\})$, which is commutative by Lemma~\ref{lem:e_(g,p) commute}. Then Lemma~\ref{lem:first Qs} says that there exists $A \subset F$ such that $Q_{F,A}^{e}$ is non-zero and $dQ_{F,A}^{e} = \|d\|Q_{F,A}^{e}$. In particular, $\prod_{(g,p) \in A}{e_{g,p}}$ is non-zero, so Lemma~\ref{lem:e_(g,p) commute} implies that there exist $g_A \in G$ and $p_A\in P$ such that $\prod_{(g,p) \in A}{e_{g,p}} = e_{g_A,p_A}$. Thus, we can apply Lemma~\ref{lem:small subprojections1} to $e_{g_A,p_A}\prod_{(h,q) \in F\setminus A}(1-e_{h,q}) = Q_{F,A}^{e} \neq 0$ and the proof is complete.
\end{proof}

\noindent Note that the hard part of the proof for Lemma~\ref{lem:small subprojections2} is hidden in Lemma~\ref{lem:small subprojections1}.

\begin{theorem}\label{thm:O p.i. and simple}
If $(G,P,\theta)$ is minimal and the action $\hat{\tau}$ is amenable, then $\CO[G,P,\theta]$ is purely infinite and simple.
\end{theorem}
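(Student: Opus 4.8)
The plan is to run a Cuntz-type argument built on the faithful conditional expectation $E\colon\CO[G,P,\theta]\to\CD$ of Corollary~\ref{cor:cond exp O->D}, whose faithfulness is exactly where amenability of $\hat\tau$ is used. Recall that a unital C*-algebra $A\neq\C$ is purely infinite and simple if and only if for every nonzero $a\geq 0$ there exist $x,y\in A$ with $xay=1$. Since $\CO[G,P,\theta]\neq 0$ by Proposition~\ref{prop:O-canonical rep} and is visibly not $\C$, it suffices to produce such $x,y$ for an arbitrary nonzero $a\geq 0$. As $E$ is faithful, $E(a)\neq 0$, and after rescaling I may assume $\|E(a)\|=1$.

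First I would choose a self-adjoint finite linear combination $b=\sum_i\lambda_i\,u_{g_i}s_{p_i}s_{q_i}^{*}u_{h_i}^{*}$ of the spanning elements from Lemma~\ref{lem:dense span} with $\|a-b\|<\varepsilon$ for some small $\varepsilon<\tfrac13$. Then $E(b)\in\CD$ is a real linear combination of projections $e_{g,p}$ with $\|E(b)-E(a)\|<\varepsilon$. Writing $F$ for the finite set of pairs appearing in $E(b)$ and invoking Lemma~\ref{lem:first Qs}, there is $A\subset F$ with $Q_{F,A}^{e}\neq 0$ and $E(b)\,Q_{F,A}^{e}=c\,Q_{F,A}^{e}$ for a real scalar $c$; because $E(a)\geq 0$ has norm $1$ and $\varepsilon<1$, this $c$ may be taken positive with $c>1-\varepsilon$. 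Since $\prod_{(g,p)\in A}e_{g,p}$ is a nonzero product, Lemma~\ref{lem:e_(g,p) commute} identifies it with a single $e_{g_A,p_A}$, so $Q_{F,A}^{e}=e_{g_A,p_A}\prod_{(h,q)\in F\setminus A}(1-e_{h,q})$, and Lemma~\ref{lem:small subprojections1} then supplies a projection $e_{g_0,p_0}\leq Q_{F,A}^{e}$ with $E(b)\,e_{g_0,p_0}=c\,e_{g_0,p_0}$.

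The decisive step is to shrink $e_{g_0,p_0}$ to a nonzero subprojection $e_{g',p'}\leq e_{g_0,p_0}$ that simultaneously annihilates the off-diagonal part, i.e. $e_{g',p'}\,(b-E(b))\,e_{g',p'}=0$. Here $b-E(b)$ is a finite sum of terms $u_gs_ps_q^{*}u_h^{*}$ with $(g,p)\neq(h,q)$, and (CNP~2) shows that such a compression vanishes once the cylinder set supporting $e_{g',p'}$ has been pushed off the finitely many orbit-type sets $g_i\bigcap_{m\in\N}\theta_{p_i^{m}}(G)$ associated with the terms for which $p_i\neq 1_P$ or $q_i\neq 1_P$; the purely translational terms ($p=q=1_P$, $g\neq h$) are discarded using that $\hat\tau$ is topologically free, which by Proposition~\ref{prop:IAD min iff tau min+topfree} is equivalent to minimality of $(G,P,\theta)$. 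The required $(g',p')$ with $e_{g',p'}\leq e_{g_0,p_0}$ realizing this avoidance is produced by Lemma~\ref{lem:believe it or not}, whose output $g'\in g_0\theta_{p_0}(G)$, $p'\in p_0P$ guarantees $e_{g',p'}\leq e_{g_0,p_0}$ and whose proof rests on irreversibility: for $p\neq 1_P$ the indices grow, so $g'\theta_{p'}(G)$ can always be driven off any prescribed finite union of such sets. I expect this elimination of off-diagonal terms to be the main obstacle; in the singly generated situation of \cite{CV}*{Theorem~2.6} one long word suffices, whereas here the several generators of $P$ and the possibly infinite indices force the delicate bookkeeping of Lemmas~\ref{lem:small subprojections1} and~\ref{lem:believe it or not}.

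Granting this, set $w:=u_{g'}s_{p'}$, which is an isometry with $ww^{*}=e_{g',p'}$, $w^{*}w=1$, and $w=e_{g',p'}w$. From $E(b)\,e_{g',p'}=c\,e_{g',p'}$ and $e_{g',p'}\,(b-E(b))\,e_{g',p'}=0$ we get $e_{g',p'}\,b\,e_{g',p'}=c\,e_{g',p'}$, hence $w^{*}bw=c\,w^{*}w=c\cdot 1$ and $\|w^{*}aw-c\cdot 1\|\leq\|a-b\|<\varepsilon$. As $c>1-\varepsilon$ and $\varepsilon<\tfrac13$, the positive element $w^{*}aw$ is invertible with $w^{*}aw\geq(1-2\varepsilon)\cdot 1>0$, so $x:=(w^{*}aw)^{-1}w^{*}$ and $y:=w$ satisfy $xay=1$. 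This shows $\CO[G,P,\theta]$ is purely infinite and simple.
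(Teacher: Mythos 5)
Your overall strategy is exactly the paper's: use the faithful conditional expectation $E$ (faithful precisely because $\hat\tau$ is amenable), compress a spanning-set approximant onto a projection $e_{\tilde g,\tilde p}$ on which $E(b)$ acts as a scalar close to $\|E(a)\|$ while the off-diagonal part is annihilated, and then conjugate by the isometry $u_{\tilde g}s_{\tilde p}$; the endgame with $x=(w^*aw)^{-1}w^*$, $y=w$ is fine, as is the use of Lemmas~\ref{lem:first Qs}, \ref{lem:e_(g,p) commute} and \ref{lem:small subprojections1} to produce the initial $e_{g_0,p_0}$.

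The gap is in the "decisive step", which you assert rather than prove, and your description of it does not match what actually has to be done. The correct dichotomy among off-diagonal spanning elements $u_{g_i}s_{p_i}s_{q_i}^*u_{h_i}^*$ is $p_i=q_i$ (with $g_i\neq h_i$) versus $p_i\neq q_i$, not $p_i=q_i=1_P$ versus the rest. The terms with $p_i=q_i$ equal $u_{g_ih_i^{-1}}e_{h_i,p_i}$ and are killed by choosing $p'$ with $(g')^{-1}g_ih_i^{-1}g'\notin\theta_{p'}(G)$, which minimality supplies directly; no orbit-type sets are involved there. The terms with $p_i\neq q_i$ are the hard ones, and for them the claim that "(CNP~2) shows the compression vanishes once $e_{g',p'}$ is pushed off $g_i\bigcap_m\theta_{p_i^m}(G)$" is exactly what needs proof. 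One must compute $s_{p'}^*u_{(g')^{-1}g_i}s_{p_i}s_{q_i}^*u_{h_i^{-1}g'}s_{p'}$ via (CNP~2), reduce its nonvanishing to the condition $\theta_{r_q}((g')^{-1}g_i)\theta_{r_p}(h_i^{-1}g')\in\theta_{p'}(G)$ with $r_p=(p_i\wedge q_i)^{-1}p_i$, $r_q=(p_i\wedge q_i)^{-1}q_i$, and then analyse the (non-homomorphism) map $f_i(g)=\theta_{r_q}(g)\theta_{r_p}(g)^{-1}$: independence of $\theta_{r_p}$ and $\theta_{r_q}$ (condition~(C) of Definition~\ref{def:IAD}) together with an induction shows the fibres of $f_i$ are cosets of $\bigcap_{n}\theta_{(r_pr_q)^n}(G)$ — note the period is $r_pr_q$, not $p_i$ — and only then does Lemma~\ref{lem:believe it or not} apply. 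Even after that, avoiding the intersection only gives inequality of group elements, and one must still enlarge $p'$ to a sufficiently large $\tilde p$ so that the elements remain distinct modulo $\theta_{(p_i\vee q_i)^{-1}\tilde p}(G)$, which is what actually forces the compressed term to vanish. This fibre analysis and the final enlargement are the technical heart of the theorem — it is the one place where the independence axiom is used — and they are absent from your argument.
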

\begin{proof}
The linear span of $(u_gs_ps_q^*u_h^*)_{g,h \in G,p,q \in P}$ is dense in $\CO[G,P,\theta]$ according to Lemma~\ref{lem:dense span}. Every element $z$ from this linear span is of the form
\[\begin{array}{c} z = \sum\limits_{i = 1}^{m_{1}}{c_i e_{g_i,p_i}} 
		+ \sum\limits_{i = m_{1}+1}^{m_{2}}{c_i u_{g_i}s_{p_i}s_{p_i}^*u_{h_i}^*} 
		+ \sum\limits_{i = m_{2}+1}^{m_{3}}{c_i u_{g_i}s_{p_i}s_{q_i}^*u_{h_i}^*}, \end{array}\]
where $c_{i} \in \C$, 
\begin{enumerate}[a)]
\item $g_i \neq h_i$ for $m_{1}+1 \leq i \leq m_{2}$, and 
\item $p_i \neq q_i$ for $m_{2}+1 \leq i \leq m_{3}$.
\end{enumerate}
By Corollary~\ref{cor:cond exp O->D}, we have $E(z) = \sum_{i = 1}^{m_{1}}{c_i e_{g_i,p_i}} \in \CD$. If we assume $z$ to be non-zero and positive, which we will do from now on, then $E(z) > 0$ as $E$ is a faithful conditional expectation. Applying Lemma~\ref{lem:small subprojections2} to $E(z)$ yields $(g,p) \in G\times P$ such that 
\begin{enumerate}
\item[c)] $E(z)e_{g,p} = \|E(z)\|e_{g,p}$.
\end{enumerate}
In order to prove simplicity and pure infiniteness of $\CO[G,P,\theta]$, it suffices to establish the following claim: There exist $(\tilde{g},\tilde{p}) \in G\times P$ satisfying
\begin{enumerate}[(a)]
\item $e_{\tilde{g},\tilde{p}} \leq e_{g,p}$,
\item $e_{\tilde{g},\tilde{p}} u_{g_i}s_{p_i}s_{p_i}^*u_{h_i}^* e_{\tilde{g},\tilde{p}} = 0$ for $m_{1}+1 \leq i \leq m_{2}$ and
\item $e_{\tilde{g},\tilde{p}} u_{g_i}s_{p_i}s_{q_i}^*u_{h_i}^* e_{\tilde{g},\tilde{p}} = 0$ for $m_{2}+1 \leq i \leq m_{3}$.
\end{enumerate}
Indeed, if this can be done, then we get 
\[e_{\tilde{g},\tilde{p}}ze_{\tilde{g},\tilde{p}} \stackrel{(b),(c)}{=} e_{\tilde{g},\tilde{p}}E(z)e_{\tilde{g},\tilde{p}} \stackrel{c),(a)}{=} \|E(z)\| e_{\tilde{g},\tilde{p}}.\]
Now for $x \in \CO[G,P,\theta]$ positive and non-zero, let $\varepsilon > 0$ and choose a positive, non-zero element $z$, which is a finite linear combination of elements $u_{g'}s_{p'}s_{q'}^*u_{h'}^*$, to approximate $x$ up to $\varepsilon$. Then $\|E(z)\|$ is a non-zero positive element of $\CD$. Thus, choosing $e_{\tilde{g},\tilde{p}}$ as above, we see that $e_{\tilde{g},\tilde{p}}ze_{\tilde{g},\tilde{p}} = \|E(z)\| e_{\tilde{g},\tilde{p}}$ is invertible in $e_{\tilde{g},\tilde{p}} \CO[G,P,\theta] e_{\tilde{g},\tilde{p}}$. If $\|x-z\|$ is sufficiently small, this implies that $e_{\tilde{g},\tilde{p}}xe_{\tilde{g},\tilde{p}}$ is positive and invertible in $e_{\tilde{g},\tilde{p}} \CO[G,P,\theta] e_{\tilde{g},\tilde{p}}$ as well because $\|E(z)\| \stackrel{\varepsilon \rightarrow 0}{\longrightarrow}~\|E(x)\| > 0$. Hence, if we denote its inverse by $y$,	then
\[\left(y^{\frac{1}{2}}u_{\tilde{g}}s_{\tilde{p}}\right)^{*}e_{\tilde{g},\tilde{p}}xe_{\tilde{g},\tilde{p}}\left(y^{\frac{1}{2}}u_{\tilde{g}}s_{\tilde{p}}\right) = 1.\vspace*{3mm}\]

\noindent We claim that there is a pair $(\tilde{g},\tilde{p}) \in G\times P$ satisfying (a)--(c). Let $(g',p') \in g\theta_{p}(G) \times pP$ and $m_{1}+1 \leq i \leq m_{2}$. Noting that $u_{g_i}s_{p_i}s_{p_i}^*u_{h_i}^* = u_{g_i h_i^{-1}}e_{h_i,p_i}$, Lemma \ref{lem:e_(g,p) commute} implies
\[\begin{array}{lcl}
e_{g',p'}u_{g_i h_i^{-1}}e_{h_i,p_i}e_{g',p'} 
&=& e_{g',p'}u_{g_i h_i^{-1}}e_{g',p'}e_{h_i,p_i}\\ 
&=& \chi_{\theta_{p'}(G)}((g')^{-1}g_i h_i^{-1}g')~u_{g_i h_i^{-1}}e_{g',p'}e_{h_i,p_i}. 
\end{array}\]
According to a), we have $(g')^{-1}g_i h_i^{-1}g' \neq 1_G$. Thus, minimality of $(G,P,\theta)$ provides $p_i' \in pP$ with the property that $(g')^{-1}g_i h_i^{-1}g' \notin \theta_{p_{i}'}(G)$. So if we take $p^{(b)} := \bigvee\limits_{i = m_{1}+1}^{m_{2}}{p_i'}$, then (a) and (b) of the claim hold for all $(g',p') \in g\theta_{p}(G)\times p^{(b)}P$. Let us assume that $p' \geq p^{(b)} \vee \bigvee_{i = m_{2}+1}^{m_{3}}{p_i \vee q_i}$ and $g' \in g \circ \theta_{p'}(G)$. Then condition (c) holds for $(g',p')$ if and only if
\[\begin{array}{lcl}
0 &\hspace*{-2.5mm}=&\hspace*{-2.5mm} s_{p'}^*u_{(g')^{-1}g_i}s_{p_i}s_{q_i}^*u_{h_i^{-1}g'}s_{p'} \\
&\hspace*{-2.5mm}=&\hspace*{-2.5mm} \chi_{\theta_{p_i}(G)}((g')^{-1}g_i)\chi_{\theta_{q_i}(G)}(h_i^{-1}g') s^*_{p_i^{-1}p'}u_{\theta_{p_i}^{-1}((g')^{-1}g_i)\theta_{q_i}^{-1}(h_i^{-1}g')}s_{q_i^{-1}p'}
\end{array}\] 	
is valid for all $m_{2}+1 \leq i \leq m_{3}$. This is precisely the case if at least one of the conditions
\begin{enumerate}[$\cdot$]
\item $(g')^{-1}g_i \in \theta_{p_i}(G)$,
\item $(g')^{-1}h_i \in \theta_{q_i}(G)$, or
\item $\theta_{p_i}^{-1}((g')^{-1}g_i)\theta_{q_i}^{-1}(h_i^{-1}g') \in \theta_{(p_i \vee q_i)^{-1}p'}(G)$
\end{enumerate}
fails for each $i$. Suppose, we have an index $i$ for which the first two conditions are satisfied. Using injectivity of $\theta_{p_i \vee q_i}$, the third condition is equivalent to $\theta_{r_q}((g')^{-1}g_i) \theta_{r_p}(h_i^{-1}g') \in \theta_{p'}(G)$, where $r_p:= (p_i \wedge q_i)^{-1}p_i$ and $r_q:= (p_i \wedge q_i)^{-1}q_i$. Condition b) implies $r_p \wedge r_q = 1_P \neq r_p r_q$. Moreover, we have 
\[\theta_{r_q}((g')^{-1}g_i) \theta_{r_p}(h_i^{-1}g') = 1_G \Longleftrightarrow 
\theta_{r_q}(g')\theta_{r_p}(g')^{-1} = \theta_{r_q}(g_i) \theta_{r_p}(h_i^{-1}).\] 
Let us examine the range of the map $f_i: G \longrightarrow G$ that is defined by $g \mapsto \theta_{r_q}(g)\theta_{r_p}(g)^{-1}$. Note that $f_{i}$ need not be a group homomorphism unless $G$ is abelian, in which case the following part can be shortened. If $k_1,k_2 \in G$ have the same image under $f_i$, then $\theta_{r_p}(k_2^{-1}k_1) = \theta_{r_q}(k_2^{-1}k_1)$. By (C1) from Definition~\ref{def:IAD}, this gives 
$k_2^{-1}k_1 \in \theta_{r_p}(G) \cap \theta_{r_q}(G) = \theta_{r_p r_q}(G)$.
But if $k_2^{-1}k_1 = \theta_{r_p r_q}(k_3)$ holds for some $k_3 \in G$, then $\theta_{r_p}(k_2^{-1}k_1) = \theta_{r_q}(k_{2}^{-1}k_1)$ implies that $\theta_{r_p}(k_3) = \theta_{r_q}(k_3)$ holds as well because $P$ is commutative and $\theta_{q_{i,1}q_{i,2}}$ is injective. By induction, we get $k_2^{-1}k_1 \in \bigcap_{n \in \N}{\theta_{(r_p r_q)^n}(G)}$.

Hence $f_i^{-1}(\theta_{r_p}(h_i)\theta_{r_q}(g_i^{-1}))$ is either empty, in which case there is nothing to do, or it is of the form $\tilde{g}_i~\bigcap_{n \in \N}{\theta_{(r_p r_q)^n}(G)}$ for a suitable $\tilde{g}_i \in G$. But for the collection of those $i$ for which the preimage in question is non-empty, we can apply Lemma~\ref{lem:believe it or not} to obtain $\tilde{g} \in g\theta_{p'}(G)$ such that $f_{i}(\tilde{g}) \neq \theta_{r_p}(h_i) \theta_{r_q}(g_i^{-1})$ for all relevant $i$.

By condition (C2) from Definition~\ref{def:IAD}, we can choose  $\tilde{p} \geq p'$ large enough so that these elements are still different modulo $\theta_{(p_i \vee q_i)^{-1}\tilde{p}}(G)$ for all $i$. In this case, we get
\[\theta_{p_i}^{-1}(\tilde{g}^{-1}g_i)\theta_{q_i}(h_i^{-1}\tilde{g}) \notin \theta_{(p_i \vee q_i)^{-1}\tilde{p}}(G) \text{ for all } m_2+1 \leq i \leq m_3,\]
so $(\tilde{g},\tilde{p})$ satisfies (c). In other words, we have proven that the pair $(\tilde{g},\tilde{p})$ satisfies (a)--(c). Thus, $\CO[G,P,\theta]$ is purely infinite and simple. 
\end{proof}

\noindent From this result, we easily get the following corollaries:

\begin{corollary}
If $(G,P,\theta)$ is minimal and $\hat{\tau}$ is amenable, then the representation $\lambda: \CO[G,P,\theta] \longrightarrow \CL(\ell^2(G))$ from Proposition~\ref{prop:O-canonical rep} is faithful. 	
\end{corollary}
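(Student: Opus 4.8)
The plan is to let simplicity do all the work, so that the statement becomes an immediate consequence of Theorem~\ref{thm:O p.i. and simple}. The key point to isolate is the standard fact that a non-zero $*$-homomorphism out of a simple C*-algebra is automatically injective. Concretely, first I would recall that $\lambda$ is non-trivial: by Proposition~\ref{prop:O-canonical rep} the assignment $u_g \mapsto U_g$, $s_p \mapsto S_p$ does extend to a genuine representation on $\ell^2(G)$, and it is non-zero (indeed $\lambda(1) = \id_{\ell^2(G)} \neq 0$). Thus $\lambda$ is a non-zero $*$-homomorphism $\CO[G,P,\theta] \longrightarrow \CL(\ell^2(G))$.

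Next I would observe that $\ker \lambda$ is a closed two-sided ideal of $\CO[G,P,\theta]$. Under the hypotheses that $(G,P,\theta)$ is minimal and $\hat{\tau}$ is amenable, Theorem~\ref{thm:O p.i. and simple} guarantees that $\CO[G,P,\theta]$ is (purely infinite and) simple, so its only closed two-sided ideals are $\{0\}$ and $\CO[G,P,\theta]$ itself. Since $\lambda \neq 0$, the ideal $\ker\lambda$ is proper, hence $\ker\lambda = \{0\}$; that is, $\lambda$ is injective and therefore faithful. There is essentially no obstacle to this argument beyond having simplicity on hand — all the genuine technical difficulty has already been absorbed into Theorem~\ref{thm:O p.i. and simple} (and, through it, into Lemma~\ref{lem:small subprojections1} and Lemma~\ref{lem:believe it or not}), so the corollary is merely the observation that simplicity collapses the kernel of any non-zero representation.
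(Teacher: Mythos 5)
Your argument is correct and is exactly the paper's proof: the canonical representation is non-zero by Proposition~\ref{prop:O-canonical rep}, and simplicity of $\CO[G,P,\theta]$ from Theorem~\ref{thm:O p.i. and simple} forces the kernel to vanish. Nothing further is needed.
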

\begin{proof}
This follows readily from Proposition~\ref{prop:O-canonical rep} and simplicity of $\CO[G,P,\theta]$.	
\end{proof}

\noindent Combining Lemma~\ref{lem:dense span}, Theorem~\ref{thm:O p.i. and simple} and Proposition~\ref{prop:F nuclear and UCT}, we get:

\begin{corollary}\label{cor:UCT Kirchberg algebra}
If $(G,P,\theta)$ is minimal and $\hat{\tau}$ is amenable, then $\CO[G,P,\theta]$ is a unital UCT Kirchberg algebra.
\end{corollary}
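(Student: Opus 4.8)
The plan is to verify directly that $\CO[G,P,\theta]$ satisfies each defining property of a UCT Kirchberg algebra—that it is a separable, unital, nuclear, simple, purely infinite C*-algebra satisfying the UCT—and then appeal to the Kirchberg--Phillips classification. Almost every ingredient has already been prepared, so the argument is essentially a matter of assembling the preceding results.

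First I would note that $\CO[G,P,\theta]$ is unital: by construction it is generated by a unitary representation $(u_g)_{g \in G}$ of $G$, and $u_{1_G}$ serves as the identity of the whole algebra. For separability, I would invoke Lemma~\ref{lem:dense span}, which exhibits the linear span of $(u_gs_ps_q^*u_h)_{g,h \in G,\,p,q \in P}$ as dense in $\CO[G,P,\theta]$. Since $G$ is countable and $P$ is a countably generated free abelian monoid—hence itself countable, being isomorphic to $\N^k$ or $\bigoplus_\N \N$ by condition (B) of Definition~\ref{def:IAD}—the indexing set $G \times G \times P \times P$ is countable. Finite linear combinations of this spanning family with coefficients in $\IQ + i\IQ$ therefore constitute a countable dense subset, so $\CO[G,P,\theta]$ is separable.

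The remaining, and genuinely substantial, properties come from two direct citations. Under the standing hypotheses that $(G,P,\theta)$ is minimal and $\hat{\tau}$ is amenable, Theorem~\ref{thm:O p.i. and simple} yields that $\CO[G,P,\theta]$ is simple and purely infinite, while Proposition~\ref{prop:F nuclear and UCT} yields that it is nuclear and satisfies the UCT. Combining these with the unitality and separability established above, $\CO[G,P,\theta]$ is by definition a unital UCT Kirchberg algebra, and is thus classifiable by its K-theory. As the deep content is entirely carried by the two cited results, there is no real obstacle in this corollary; the only point requiring a moment's attention is separability, which reduces to the countability of $G$ and $P$ and is immediate from Definition~\ref{def:IAD}.
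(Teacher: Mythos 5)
Your proposal is correct and follows exactly the paper's route: the paper derives the corollary by combining Lemma~\ref{lem:dense span} (separability), Theorem~\ref{thm:O p.i. and simple} (simplicity and pure infiniteness), and Proposition~\ref{prop:F nuclear and UCT} (nuclearity and the UCT). Your additional remarks on unitality and the countability of $G$ and $P$ merely make explicit what the paper leaves implicit.
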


\noindent Thus, minimal irreversible algebraic dynamical systems $(G,P,\theta)$ for which the action $\hat{\tau}$ is amenable yield C*-algebras $\CO[G,P,\theta]$ that are classified by their K-theory, see \cites{Kir,Phi}. Let us come back to some of the examples from Section~1.1 and briefly describe the structure of the C*-algebras obtained in the various cases:

\begin{examples}\label{ex:CA IAD standard}~
\begin{enumerate}[(a)]
\item Let $G = \Z$, $(p_i)_{i \in I} \subset \Z{\setminus}\{0,\pm 1\}$ be a family of relatively prime integers, and set $P = |(p_i)_{i \in I}\rangle \subset \Z^\times$, which acts on $G$ by $\theta_i(g) = p_ig$. We know from the considerations in Example~\ref{ex:IAD standard}~(a) that $(G,P,\theta)$ is  minimal, so $\CO[G,P,\theta]$ is a unital UCT Kirchberg algebra. If we denote $p:= \prod_{i \in I}|p_i| \in \N \cup \{\infty\}$, then $G_\theta$ can be identified with the $p$-adic completion $\Z_p = \varprojlim (\Z/q\Z,\theta_q)_{q \in P}$ of $\Z$. Moreover, $\CF$ is the Bunce-Deddens algebra of type $p^\infty$, see \cite{BD} for the classification of Bunce-Deddens algebras by supernatural numbers. 
\item Let $I\subset\N$, choose $\{q\} \cup (p_i)_{i \in I} \subset \Z\setminus\Z^*$ relatively prime, $P = |(p_i)_{i \in I}\rangle$, set $G = \Z[1/q]$, and let $\theta_p(g) = pg$ for $g \in G,p \in P$. As in (a), $\CO[G,P,\theta]$ is a UCT Kirchberg algebra by the considerations in Example~\ref{ex:IAD standard}~(b) and Corollary~\ref{cor:UCT Kirchberg algebra}. If $p:= \prod_{i \in I}|p_i| \in \N \cup \{\infty\}$, then $G_\theta$ can be thought of as a $p$-adic completion of $\Z[1/q]$ and we obtain $\CF \cong C(G_\theta) \rtimes_\tau \Z[1/q]$.
\end{enumerate}
\end{examples}

\begin{example}\label{ex:CA IAD for On}
We have seen in Example~\ref{ex:IAD for On} that for $n \geq 2$, the dynamical system given by the unilateral shift on $G = \bigoplus_\N \Z/n\Z$ is a minimal commutative irreversible algebraic dynamical system of finite type. It has been observed in \cite{CV} that $\CO[G,P,\theta]$ is isomorphic to $\CO_n$ in a canonical way: If $e_1 = (1,0,0,\dots,) \in G$, $s \in \CO[G,P,\theta]$ denotes the generating isometry for $P$ and $s_1,\dots,s_n$ are the generating isometries of $\CO_n$, then this isomorphism is given by $u_{ke_1}s \mapsto s_k$ for $k=1,\dots,n$. In particular, $\CF$ is the UHF algebra of type $n^\infty$ and $G_\theta$ is homeomorphic to the space of infinite words using the alphabet $\{1,\dots,n\}$.
\end{example}


\begin{example}\label{ex:CA IAD as sum of IADs}
Given a family $(G^{(i)},P,\theta^{(i)})_{i\in \N}$, where each $(G^{(i)},P,\theta^{(i)})$ is an irreversible algebraic dynamical system, we can consider $G := \bigoplus_{i \in \N} G^{(i)}$, on which $P$ acts component-wise. Assume that each $(G^{(i)},P,\theta^{(i)})$ and hence $(G,P,\theta)$ is minimal, compare Example~\ref{ex:IAD as sum of IADs}. We have $G_\theta \cong \prod_{i \in I} G^{(i)}_{\theta^{(i)}}$. Thus the $G$-action $\hat{\tau}$ on $ G_\theta$ is amenable if and only if each $G_i$-action $\hat{\tau}_i$ on $ G^{(i)}_{\theta^{(i)}}$ is amenable. As $G$ is commutative (amenable) if and only if each $G^{(i)}$ is, there are various cases where amenability of $\hat{\tau}$ is for granted. In such situations, $\CO[G,P,\theta]$ is a unital UCT Kirchberg algebra.
\end{example}

\begin{example}\label{ex:CA IAD free group}
For the examples arising from free group $\IF_n$ with $2 \leq n \leq \infty$, see Example~\ref{ex:IAD free group}, we are able to provide criteria (1)--(3) to ensure that we obtain minimal irreversible algebraic dynamical systems. Hence $G_\theta$ can be interpreted as a certain completion of $\IF_n$ with respect to $\theta$. Now $\IF_n$ is far from being amenable, but the action $\hat{\tau}$ could still be amenable: The free groups are known to be exact. By a famous result of Narutaka Ozawa, exactness of a discrete group is equivalent to amenability of the left translation action on its Stone-\v{C}ech compactification, see \cite{Oza}. Recently, Mehrdad Kalantar and Matthew Kennedy have shown that exactness of a discrete group is also determined completely by amenability of the natural action on its Furstenberg boundary, see \cite{KK} for details. The latter space is usually substantially smaller than the Stone-\v{C}ech compactification and their methods may give some insights into the question of amenability in the context of the examples presented here.
\end{example}

\section{The finite type case revisited}\label{sec4}
\noindent This section provides a more detailed presentation of the case where $(G,P,\theta)$ is of finite type. In particular, we exhibit additional structural properties of the spectrum $G_\theta$ of the diagonal $\CD$ in $\CO[G,P,\theta]$. For instance, the assumption that $\theta_p(G) \subset G$ is normal for every $p \in P$ causes $G_\theta$ to inherit the group structure from $G$. This turns $G_\theta$ into a profinite group. 
If, in addition, $(G,P,\theta)$ is minimal and $G$ is amenable, then $\CF$ falls into the class of generalised Bunce-Deddens algebras, see \cites{Orf,Car} for details. Due to \cites{Lin,MS,Win}, they belong to a large class of C*-algebras that can be classified by K-theory. 

We are particularly interested in the case where $G$ is abelian. For such dynamical systems, the situation is significantly easier as $\theta_p(G) \subset G$ is normal for all $p \in P$ and the action $\hat{\tau}$ is always amenable. In fact, the structure of $\CD$ and $\CF$ is quite similar to the one discovered in the singly generated case, compare \cite{CV}*{Section 2}: $G_\theta$ is a compact abelian group and we have a chain of isomorphisms $\CF \cong C(G_\theta) \rtimes_\tau G \cong C(\hat{G}) \rtimes_{\bar{\tau}} \hat{G}_\theta$. Throughout this section, we will assume that $(G,P,\theta)$ is an irreversible algebraic dynamical system of finite type.

\begin{remark}\label{rem:CA IADoFT basic}
Recall from Remark~\ref{rem:IAD SpecD as completion} that $G_\theta$ can be thought of as a completion of $G$ with respect to $\theta$ provided that $(G,P,\theta)$ is minimal. The map $\iota$ from Lemma~\ref{lem:IAD spec D as a completion of G} transports more structure under additional hypotheses: 
\begin{enumerate}[a)]
\item If $\theta_p(G)$ is normal in $G$ for all $p \in P$, then $G_{\theta}$ is a profinite group. 
\item If $(G,P,\theta)$ is minimal and $\theta_p(G)$ is normal in $G$ for all $p \in P$, then $\iota$ is a dense embedding of groups. In particular, the $G$-action $\hat{\tau}$ on $G_\theta$ is the left translation action of a dense subgroup in $G_\theta$.
\item $G_{\theta}$ is an abelian group if and only if $G$ is an abelian group. So if $(G,P,\theta)$ is a minimal commutative irreversible algebraic dynamical system, then $G_\theta$ is a compact abelian group and $\CF$ has a unique tracial state by b). This follows from a straightforward adaptation of the corresponding part of the proof for \cite{CV}*{Lemma 2.5}   
\end{enumerate}
\end{remark}



\begin{proposition}\label{prop:gen BD-alg for IADoFT}
Suppose $(G,P,\theta)$ is minimal and $G$ is amenable. Then $\CF$ is a generalised Bunce-Deddens algebra.
\end{proposition}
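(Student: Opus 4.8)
The plan is to read off the conclusion from the identification $\CF\cong C(G_\theta)\rtimes_\tau G$ furnished by Corollary~\ref{cor:F cong D rtimes G-general IAD}, by recognising the right-hand side as a generalised Bunce-Deddens algebra in the sense of \cite{Orf}. Recall that such an algebra is a crossed product $C(\varprojlim_n G/H_n)\rtimes G$ attached to a countable amenable group $G$ and a decreasing chain $(H_n)_n$ of finite-index subgroups with $\bigcap_n H_n=\{1_G\}$, where $G$ acts on the profinite coset space $\varprojlim_n G/H_n$ by left translation. Since $G$ is amenable, the action $\hat\tau$ is amenable and so the full and reduced crossed products coincide; it therefore suffices to present $G_\theta$, together with the action $\hat\tau$, in precisely this shape.

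First I would manufacture the chain of subgroups. Because $P$ is a countably generated free abelian monoid, I can choose a sequence $(p_n)_{n\geq 1}$ with $p_{n+1}\in p_nP$ for every $n$ which is moreover cofinal, i.e.\ for each $p\in P$ one has $p_n\in pP$ for all sufficiently large $n$; for instance, if $(x_k)_k$ enumerates the generators of $P$ one may take $p_n:=x_1^n\cdots x_n^n$. Setting $H_n:=\theta_{p_n}(G)$, finiteness of type gives $[G:H_n]<\infty$, the relation $p_{n+1}\in p_nP$ gives $H_{n+1}\subset H_n$, and minimality combined with cofinality yields $\bigcap_n H_n=\bigcap_{p\in P}\theta_p(G)=\{1_G\}$.

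Next I would pin down the spectrum equivariantly. By Lemma~\ref{lem:D as inductive limit} the diagonal is $\CD=\overline{\bigcup_n\CD_{p_n}}$ with $\CD_{p_n}=\Span\{e_{g,p_n}\mid [g]\in G/H_n\}\cong\C^{[G:H_n]}$, whose Gelfand spectrum is canonically the finite $G$-set $G/H_n$. Using the relation (CNP~3) together with Lemma~\ref{lem:e_(g,p) commute} one checks that $e_{g,p_n}=\sum e_{k,p_{n+1}}$, the sum running over the cosets $k\theta_{p_{n+1}}(G)\subset g\theta_{p_n}(G)$, so that the inclusion $\CD_{p_n}\hookrightarrow\CD_{p_{n+1}}$ is dual to the natural surjection $G/H_{n+1}\twoheadrightarrow G/H_n$. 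Passing to spectra therefore gives a homeomorphism $G_\theta\cong\varprojlim_n G/H_n$ under which $\hat\tau_g$, defined by $e_{h,p}\mapsto e_{gh,p}$, becomes left translation $kH_n\mapsto gkH_n$ on each quotient, compatibly with the connecting maps; hence $\hat\tau$ is literally the left-translation action on the inverse limit.

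Combining the two identifications, $\CF\cong C(G_\theta)\rtimes_\tau G\cong C(\varprojlim_n G/H_n)\rtimes G$ matches the definition of \cite{Orf} verbatim, which is the assertion. The main obstacle is the bookkeeping of the third step: one must verify that the connecting maps of the inductive system $(\CD_{p_n})_n$ genuinely dualise to the coset projections $G/H_{n+1}\twoheadrightarrow G/H_n$, and that $\hat\tau$ really is the left-translation action on the resulting inverse limit rather than merely conjugate to it. Once this equivariant homeomorphism is secured, the remaining ingredients (finite index, nesting, trivial intersection, and amenability) are already in hand, and no appeal to topological freeness or to the deeper simplicity results of Section~\ref{sec3} is needed.
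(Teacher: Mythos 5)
Your argument is essentially the paper's: it chooses an increasing cofinal sequence $(p_n)$ in $P$, observes that the subgroups $H_n=\theta_{p_n}(G)$ form a nested family of finite index with trivial intersection (by minimality), and matches $\CF\cong C(G_\theta)\rtimes_\tau G$ from Corollary~\ref{cor:F cong D rtimes G-general IAD} with Orfanos's construction; you merely spell out the equivariant identification $G_\theta\cong\varprojlim_n G/H_n$ that the paper leaves implicit. The one point you omit is that Orfanos's generalised Bunce--Deddens algebras require the $H_n$ to be \emph{normal} in $G$ (so that $\varprojlim_n G/H_n$ is a profinite group on which $G$ acts by translation); the paper asserts this normality, relying on the hypothesis discussed in Remark~\ref{rem:CA IADoFT basic} that $\theta_p(G)$ is normal in $G$ for all $p\in P$, and your proof should record that hypothesis as well since it is not automatic for a general finite-type system.
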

\begin{proof}
This follows directly from the construction of the generalised Bunce-Deddens algebras presented in \cite{Orf}*{Section 2}: Choose an arbitrary, increasing, cofinal sequence $(p_n)_{n \in \N} \subset P$, where cofinal means that, for every $q \in P$, there exists an $n \in \N$ such that $p_n \in qP$. Then $(\theta_{p_n}(G))_{n \in \N}$ is a family of nested, normal subgroups of finite index in $G$. This family is separating for $G$ by minimality of $(G,P,\theta)$.
\end{proof}

\noindent In particular, these assumptions force $\CF$ to be unital, nuclear, separable, simple, quasidiagonal, and to have real rank zero, stable rank one, strict comparison for projections as well as a unique tracical state, see \cite{Orf}. As the combination of real rank zero and strict comparison for projections yields strict comparison (for positive elements), the prerequisites for \cite{MS}*{Theorem 1.1} are met, so $\CF$ also has finite decomposition rank. This establishes the remaining step to achieve classification of the core $\CF$ by means of its Elliott invariant $\left(K_0(\CF),K_0(\CF)_+,[1_{\CF}],K_1(\CF)\right)$ thanks to results of Huaxin Lin and Wilhelm Winter, see \cite{Win}*{Corollary 6.5(i)} and \cite{Lin}:

\begin{corollary}\label{cor:IAD class of the core}
Let $(G_i,P_i,\theta_i)$ be minimal and $G_i$ be amenable for $i = 1,2$. If $\CF_1$ and $\CF_2$ denote the respective cores, then $\CF_1 \cong \CF_2$ holds if and only if
\[\left(K_0(\CF_1),K_0(\CF_1)_+,[1_{\CF_1}],K_1(\CF_1)\right) \cong \left(K_0(\CF_2),K_0(\CF_2)_+,[1_{\CF_2}],K_1(\CF_2)\right).\]

\end{corollary}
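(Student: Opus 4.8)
The plan is to check that $\CF_1$ and $\CF_2$ fall within the scope of the Lin--Winter classification theorem and that, for algebras of real rank zero, the Elliott invariant is exhausted by the quadruple $(K_0,K_0^+,[1],K_1)$; the corollary is then immediate. The ``only if'' direction needs no work, since any $*$-isomorphism $\CF_1 \cong \CF_2$ induces an order- and unit-preserving isomorphism $K_0(\CF_1) \cong K_0(\CF_2)$ together with an isomorphism $K_1(\CF_1) \cong K_1(\CF_2)$ by functoriality of $K$-theory. All the content lies in the converse.

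First I would assemble the structural properties of each $\CF_i$. Amenability of $G_i$ makes the action $\hat\tau$ of $G_i$ on $(G_i)_{\theta_i}$ amenable, so Proposition~\ref{prop:F nuclear and UCT} shows that $\CF_i$ is nuclear and satisfies the UCT. By Proposition~\ref{prop:gen BD-alg for IADoFT} each $\CF_i$ is a generalised Bunce--Deddens algebra, hence by \cite{Orf} it is unital, separable, simple, of real rank zero and stable rank one, with strict comparison for projections and a unique tracial state. Real rank zero together with strict comparison for projections upgrades to strict comparison for positive elements, so the hypotheses of \cite{MS}*{Theorem 1.1} hold and each $\CF_i$ has finite decomposition rank.

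With these properties in hand, each $\CF_i$ is a unital, separable, simple, nuclear C*-algebra of finite decomposition rank and real rank zero satisfying the UCT, which is precisely the setting of the classification result \cite{Win}*{Corollary 6.5(i)} combined with \cite{Lin}. This theorem produces a $*$-isomorphism from an isomorphism of Elliott invariants; the final point to record is that real rank zero makes the stated quadruple a complete invariant, since the tracial state space is recovered as the state space of $(K_0(\CF_i),K_0(\CF_i)_+,[1_{\CF_i}])$ and the trace--$K_0$ pairing is canonical. Thus an isomorphism of the reduced invariants lifts to one of full Elliott invariants, and the cited classification yields $\CF_1 \cong \CF_2$.

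The main difficulty here is not computational but conceptual and bibliographic: one must pin down exactly which regularity conditions the Lin--Winter theorem requires and confirm that finite decomposition rank, real rank zero, and the UCT suffice, and then justify that for real rank zero algebras the invariant in the statement is complete rather than the full Elliott invariant. Once the properties collected above are established---each of which has already been supplied by the preceding results and citations---the proof reduces to invoking the classification theorem.
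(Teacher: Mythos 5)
Your proposal is correct and follows essentially the same route as the paper: collect the structural properties of each $\CF_i$ from Proposition~\ref{prop:gen BD-alg for IADoFT} and \cite{Orf} (unital, separable, simple, nuclear, real rank zero, strict comparison for projections, unique trace), upgrade to strict comparison for positive elements so that \cite{MS}*{Theorem 1.1} yields finite decomposition rank, and then invoke the Lin--Winter classification \cite{Win}*{Corollary 6.5(i)} and \cite{Lin}. The only cosmetic difference is that you spell out why the quadruple $(K_0,K_0^+,[1],K_1)$ suffices in the real rank zero, unique-trace setting, which the paper leaves implicit.
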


\noindent We close this section by presenting an intriguing isomorphism of group crossed products on the level of $\CF$, compare \cite{CV}*{Lemma 2.5}:

\begin{corollary}\label{cor:dual cr prod for aIADoFT}
Let $(G,P,\theta)$ be commutative and minimal. Then there is a $\hat{G}_\theta$-action $\bar{\tau}$ on $C(\hat{G})$ for which $\CF \cong C(G_\theta) \rtimes_\tau G \cong C(\hat{G}) \rtimes_{\bar{\tau}} \hat{G}_\theta$.
\end{corollary}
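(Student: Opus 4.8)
The first isomorphism $\CF \cong C(G_\theta)\rtimes_\tau G$ is already available: Corollary~\ref{cor:F cong D rtimes G-general IAD} gives $\CF\cong\CD\rtimes_\tau G$, and $\CD\cong C(G_\theta)$ by definition of the spectrum. Since $G$ is abelian and $(G,P,\theta)$ is minimal, Remark~\ref{rem:CA IADoFT basic} tells us that $G_\theta$ is a \emph{compact abelian} group and that the map $\iota\colon G\to G_\theta$ from Lemma~\ref{lem:IAD spec D as a completion of G} is a dense embedding of groups, with respect to which $\tau$ is precisely translation by the dense subgroup $\iota(G)$. Moreover $G$ is amenable, hence $\hat\tau$ is amenable and there will be no ambiguity between full and reduced crossed products in what follows.

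The plan is to prove the self-duality of this translation crossed product under Pontryagin duality. First I would pass to the duals. Writing $\hat{G}_\theta$ for the (discrete) dual of $G_\theta$, the dual homomorphism $\hat\iota\colon\hat{G}_\theta\to\hat{G}$ is again a dense embedding of a discrete group into a compact group, since a dense embedding dualizes to a dense embedding by the annihilator facts underlying Lemma~\ref{lem:dual picture: converting endomorphisms I}. I then define $\bar\tau$ to be the translation action of the dense subgroup $\hat\iota(\hat{G}_\theta)\subset\hat{G}$ on $C(\hat{G})$, so that the assertion becomes the symmetric statement
\[
C(G_\theta)\rtimes_\tau G\;\cong\;C(\hat{G})\rtimes_{\bar\tau}\hat{G}_\theta .
\]
The tool is the Fourier transform, which turns a translation action into an action by multiplication with characters: for a compact abelian group $K$ with a dense subgroup $\Gamma$ acting by translation, the isomorphism $C(K)\cong C^*(\hat{K})$ carries $\tau_\gamma$ to the automorphism $u_\chi\mapsto\overline{\chi(\gamma)}\,u_\chi$ of $C^*(\hat K)$.

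Applying this on the left with $K=G_\theta,\ \Gamma=\iota(G)$, and on the right with $K=\hat{G},\ \Gamma=\hat\iota(\hat{G}_\theta)$ (using $\widehat{\hat{G}}=G$ and $C(\hat G)\cong C^*(G)$), both crossed products are presented as universal C*-algebras generated by a unitary representation of $G$ and a unitary representation of $\hat{G}_\theta$, coupled through the single bicharacter $\beta(g,\chi)=\chi(\iota(g))$. Concretely, the left-hand side is generated by $(v_\chi)_{\chi\in\hat{G}_\theta}$ and $(w_g)_{g\in G}$ subject to $w_gv_\chi w_g^*=\overline{\chi(\iota(g))}\,v_\chi$, while the right-hand side is generated by $(v'_g)_{g\in G}$ and $(w'_\chi)_{\chi\in\hat{G}_\theta}$ subject to $w'_\chi v'_g (w'_\chi)^*=\overline{\chi(\iota(g))}\,v'_g$. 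I would then check that the assignment $w_g\mapsto v'_g$, $v_\chi\mapsto (w'_\chi)^*$ sends a representation of $G$ to a representation of $G$, a representation of $\hat{G}_\theta$ to a representation of $\hat{G}_\theta$, and respects the commutation relation; by the universal property it extends to a $\ast$-homomorphism, and the evidently symmetric inverse assignment shows it is an isomorphism.

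The step I expect to be the main obstacle is the bookkeeping that matches the two sides. Both crossed products are twisted group C*-algebras of $G\times\hat{G}_\theta$, and the two $2$-cocycles produced by the Fourier transform have antisymmetrizations $\beta$ and $\overline{\beta}$; these are intertwined by the automorphism $(g,\chi)\mapsto(g,\chi^{-1})$ of $G\times\hat{G}_\theta$, which is exactly what accounts for the adjoint in $v_\chi\mapsto(w'_\chi)^*$. Verifying that $\hat\iota$ is a dense embedding, that $\bar\tau$ indeed dualizes to multiplication by the bicharacter $\beta$, and that this sign adjustment makes the generator map well defined is the only genuine work; none of it is deep, and the whole argument follows the pattern of \cite{CV}*{Lemma~2.5}, now carried out for the completion $G_\theta$ arising from the full monoid $P$ rather than a single endomorphism.
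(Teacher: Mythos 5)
Your proof is correct, and it reaches the second isomorphism by a genuinely different route than the paper. The action $\bar{\tau}$ you define (translation of $\hat{G}$ by the dense subgroup $\hat{\iota}(\hat{G}_\theta)$) is the same as the paper's, which is written multiplicatively as $\bar{\tau}_{\chi_\theta}(\chi)(g) = \chi_\theta(\iota(g))\chi(g)$. The difference is in how the isomorphism is established: the paper only constructs the canonical surjection $C(G_\theta)\rtimes_\tau G \onto C(\hat{G})\rtimes_{\bar{\tau}}\hat{G}_\theta$ (essentially your generator bookkeeping, run in one direction) and then invokes simplicity of $C(G_\theta)\rtimes_\tau G$ --- which rests on minimality and amenability via Corollary~\ref{cor:D rtimes G simple iff IAD minimal} --- to conclude injectivity. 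You instead exhibit both sides as universal C*-algebras generated by a unitary representation of $G$ and one of $\hat{G}_\theta$ coupled by the bicharacter $\beta(g,\chi)=\chi(\iota(g))$, and produce an explicit two-sided inverse $w_g\mapsto v'_g$, $v_\chi\mapsto (w'_\chi)^*$; the adjoint correctly absorbs the passage from $\beta$ to $\overline{\beta}$, and the relation $v'_g(w'_\chi)^*(v'_g)^* = \overline{\chi(\iota(g))}(w'_\chi)^*$ checks out. What your approach buys is independence from the simplicity theorem (the symmetric presentation does all the work, and amenability of $G$ disposes of the full-versus-reduced issue); what the paper's approach buys is brevity, since simplicity is already in hand at that point. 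Minimality is still needed in both arguments, of course, to know that $\iota$ is an injective dense group homomorphism so that $G_\theta$ is a genuine group completion and the characters on $G_\theta$ restrict injectively to $G$.
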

\begin{proof}
The first isomorphism has been achieved in Corollary~\ref{cor:F cong D rtimes G-general IAD}. For the second part, let $\bar{\tau}_{\chi_\theta} (\chi)(g) := \chi_\theta(\iota(g))\chi(g)$ for $\chi_\theta \in \hat{G}_\theta, \chi \in \hat{G}$ and $g \in G$. Since $\iota: G \longrightarrow G_\theta$ is a group homomorphism, $\bar{\tau}_{\chi_\theta} (\chi)$ defines a character of $G$. Clearly, $\hat{\tau}$ is compatible with the group structure on $\hat{G}_\theta$. According to Remark~\ref{rem:CA IADoFT basic}~b) the group homomorphism $\iota$ identifies $G$ with a dense subgroup of $G_\theta$. In this case the characters on $G_\theta$ are in one-to-one correspondence with the characters on $G$. Note that this correspondence is precisely given by regarding characters on $G_\theta$ as characters on $G$ using $\iota$. Therefore, $\bar{\tau}$ defines an action of $\hat{G}_\theta$ by homeomorphisms of the compact space $\hat{G}$. Once we know that $\bar{\tau}$ defines an action, we readily see that there is a canonical surjective $*$-homomorphism $C(G_\theta) \rtimes_\tau G \onto C(\hat{G}) \rtimes_{\bar{\tau}} \hat{G}_\theta$. As $C(G_\theta) \rtimes_\tau G$ is simple, this map is an isomorphism.  
\end{proof}

\section{A product systems perspective}\label{sec5}
\noindent This section is designed to provide a product system of Hilbert bimodules for each irreversible algebraic dynamical system $(G,P,\theta)$. The features of $(G,P,\theta)$ result in a particularly well-behaved product system $\CX$. Therefore, it is possible to obtain a concrete presentation of $\CO_{\CX}$ from the data of the dynamical system. In the case of irreversible algebraic dynamical systems of finite type, this algebra is shown to be isomorphic to $\CO[G,P,\theta]$. 

The corresponding result in the general case, that is, allowing for the presence of group endomorphisms $\theta_p$ of $G$ with infinite index, requires a more involved argument. The reason is that the prerequisites for \cite{Sta2}*{Corollary 3.21} are not met, so one has to deal with Nica covariance of representations. Since this is more closely related to the Nica-Toeplitz algebra $\CN\CT_\CX$, we will only treat the finite type case and refer to \cite{BLS2} for the strategy in the general case. In fact, the proof in  \cite{BLS2} reveals a close connection between Nica covariance and (CNP 2), relying essentially on independence of group endomorphisms for relatively prime elements of $P$. More precisely, it shows that, for $\CX$ associated to $(G,P,\theta)$, Nica covariance boils down to its original form, see \cite{Nic}: A representation $\varphi$ of the product system $\CX$ is Nica covariant if and only if $\varphi_p(1_{C^*(G)})$ and $\varphi_q(1_{C^*(G)})$ are doubly commuting isometries whenever $p$ and $q$ are relatively prime in $P$.

We start with a brief recapitulation of the necessary definitions for product systems and Cuntz-Nica-Pimsner covariance, compare \cite{Sta2}*{Section 3}.

\begin{definition}\label{def:prod system}
A \textit{product system of Hilbert bimodules} over a monoid $P$ with coefficients in a C*-algebra $A$ is a monoid $\CX$ together with a monoidal homomorphism $\rho:\CX \longrightarrow P$ such that:
\begin{enumerate}[(1)]
\item $\CX_{p} := \rho^{-1}(p)$ is a Hilbert bimodule over $A$ for each $p \in P$,
\item $\CX_{1_P} \cong \hspace*{1mm}_{\id}\hspace*{-0.5mm}A_{\id}$ as Hilbert bimodules and 
\item for all $p,q \in P$, we have $\CX_{p} \otimes_{A} \CX_{q} \cong \CX_{pq}$ if $p \neq 1_P$, and $\CX_{1_P} \otimes_{A} \CX_{q} \cong \overline{\phi_{q}(A)\CX_{q}}$.
\end{enumerate}   
\end{definition}

\begin{definition}\label{def:ONB HB}
Let $\CH$ be a Hilbert bimodule over a C*-algebra $A$ and $(\xi_{i})_{i \in I} \subset \CH$. Consider the following properties:
\[\begin{array}{llll} (1)\hspace*{-2mm}& \left\langle \xi_{i},\xi_{j} \right\rangle = \delta_{ij}1_{A} \text{ for all } i,j \in I. \hspace*{2mm}& (2)\hspace*{-2mm} & \eta = \sum\limits_{i \in I}{\xi_{i}\left\langle \xi_{i},\eta \right\rangle} \text{ for all } \eta \in \CH. \end{array}\]
If $(\xi_{i})_{i \in I}$ satisfies (1) and (2), it is called an \textit{orthonormal basis} for $\CH$.
\end{definition}

\begin{lemma}\label{lem:ONB gives matrix units}
Let $\CH$ be a Hilbert bimodule. If $(\xi_{i})_{i \in I} \subset \CH$ is an orthonormal basis, then $\left(\Theta_{\xi_{i},\xi_{j}}\right)_{i,j \in I}$ is a system of matrix units and $\sum_{i \in I}{\Theta_{\xi_{i},\xi_{i}}} = 1_{\CL(\CH)}$. If $\CH$ admits a finite orthonormal basis, then $\CK(\CH) = \CL(\CH)$.
\end{lemma}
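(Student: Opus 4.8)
The plan is to work directly from the two defining properties (1) and (2) of an orthonormal basis, together with the two elementary identities for the rank-one operators $\Theta_{\xi,\eta}\colon \zeta \mapsto \xi\langle\eta,\zeta\rangle$, namely $\Theta_{\xi,\eta}^{*} = \Theta_{\eta,\xi}$ and $\Theta_{\xi,\eta}\Theta_{\zeta,\omega} = \Theta_{\xi\langle\eta,\zeta\rangle,\omega}$, both of which hold in any Hilbert bimodule and follow by unwinding the definition of the inner product. First I would establish the matrix-unit relations. The adjoint relation $\Theta_{\xi_i,\xi_j}^{*} = \Theta_{\xi_j,\xi_i}$ is immediate from the first identity. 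For the multiplicative relation I would compute, for $\zeta \in \CH$,
\[
\Theta_{\xi_i,\xi_j}\Theta_{\xi_k,\xi_l}(\zeta) = \xi_i\langle\xi_j,\xi_k\rangle\langle\xi_l,\zeta\rangle = \delta_{jk}\,\xi_i\langle\xi_l,\zeta\rangle = \delta_{jk}\,\Theta_{\xi_i,\xi_l}(\zeta),
\]
where the middle step uses orthonormality (1). This exhibits $(\Theta_{\xi_i,\xi_j})_{i,j\in I}$ as a system of matrix units.

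Next, to obtain $\sum_{i\in I}\Theta_{\xi_i,\xi_i} = 1_{\CL(\CH)}$, I would apply the net of finite partial sums to an arbitrary $\eta\in\CH$ and invoke the reconstruction formula (2): $\big(\sum_i \Theta_{\xi_i,\xi_i}\big)(\eta) = \sum_i \xi_i\langle\xi_i,\eta\rangle = \eta$. For infinite $I$ this identity is to be read in the strict topology on $\CL(\CH)$, so I would note that the finite partial sums form an increasing net of projections converging strictly to the identity; when $I$ is finite the sum is literally equal to $1_{\CL(\CH)}$.

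Finally, for the last assertion I would argue that a finite orthonormal basis $(\xi_i)_{1\le i\le n}$ forces $1_{\CL(\CH)} = \sum_{i=1}^{n}\Theta_{\xi_i,\xi_i}$ to lie in $\CK(\CH)$, since each $\Theta_{\xi_i,\xi_i}$ is a rank-one operator and $\CK(\CH)$ is a closed subspace. As $\CK(\CH)$ is an ideal in $\CL(\CH)$, any $T\in\CL(\CH)$ then satisfies
\[
T = T\cdot 1_{\CL(\CH)} = \sum_{i=1}^{n} T\Theta_{\xi_i,\xi_i} = \sum_{i=1}^{n} \Theta_{T\xi_i,\xi_i} \in \CK(\CH),
\]
using $T\Theta_{\xi,\eta} = \Theta_{T\xi,\eta}$; hence $\CK(\CH) = \CL(\CH)$.

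There is no genuine obstacle here: the whole statement reduces to the two bimodule identities for $\Theta$ together with properties (1) and (2). The only point demanding a little care is the meaning of the possibly infinite sum $\sum_i\Theta_{\xi_i,\xi_i}$, which I would handle by passing to the strict topology; the deduction $\CK(\CH)=\CL(\CH)$ in the finite case is then a one-line ideal argument once $1_{\CL(\CH)}\in\CK(\CH)$ has been established.
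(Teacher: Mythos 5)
Your proof is correct. The paper gives no argument of its own for this lemma (it simply defers to \cite{Sta2}*{Lemma 3.9}), and your direct verification --- the two identities $\Theta_{\xi,\eta}^{*}=\Theta_{\eta,\xi}$ and $\Theta_{\xi,\eta}\Theta_{\zeta,\omega}=\Theta_{\xi\langle\eta,\zeta\rangle,\omega}$ combined with properties (1) and (2), the strict-topology reading of the infinite sum, and the ideal argument for $\CK(\CH)=\CL(\CH)$ --- is exactly the standard argument that the cited reference carries out.
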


\noindent This lemma is a reformulation of \cite{Sta2}*{Lemma 3.9} implies that product systems whose fibres have finite orthonormal bases are compactly aligned, see \cite{Sta2}*{Remark 3.13}. An explicit proof of this fact is presented in \cite{BLS2}.

\begin{definition}\label{def:rep of prod sys}
Let $\CX$ be a product system over $P$ and suppose $B$ is a C*-algebra. A map $\varphi: \CX \longrightarrow B$, whose fibre maps $\CX_{p} \longrightarrow B$ are denoted by $\varphi_{p}$, is called a \textit{Toeplitz representation} of $\CX$, if:
\begin{enumerate}[(1)]
\item $\varphi_{1_P}$ is a $\ast$-homomorphism.
\item $\varphi_{p}$ is linear for all $p \in P$.
\item $\varphi_{p}(\xi)^{*}\varphi_{p}(\eta) = \varphi_{1_P}\left(\left\langle \xi,\eta\right\rangle\right)$ for all $p \in P$ and $\xi,\eta \in \CX_{p}$.
\item $\varphi_{p}(\xi)\varphi_{q}(\eta) = \varphi_{pq}(\xi \eta)$ for all $p,q \in P$ and $\xi \in \CX_{p}, \eta \in \CX_{q}$.
\end{enumerate}   	
\end{definition}

\noindent A Toeplitz representation will be called a representation whenever there is no ambiguity. Given a representation $\varphi$ of $\CX$ in $B$, it induces $\ast$-homomorphisms $\psi_{\varphi,p}:\CK(\CX_{p}) \longrightarrow B$ for $p \in P$ characterised by $\Theta_{\xi,\eta} \mapsto \varphi_p(\xi)\varphi_p(\eta)^*$. If $\CX$ is compactly aligned, the representation $\varphi$ is said to be \emph{Nica covariant}, if 
$\psi_{\varphi,p}(k_{p})\psi_{\varphi,q}(k_{q}) = \psi_{\varphi,p \vee q}\left(\iota_{p}^{p \vee q}(k_{p})\iota_{q}^{p \vee q}(k_{q})\right)$
holds for all $p,q \in P$ and $k_{p} \in \CK(\CX_{p}),k_{q} \in \CK(\CX_{q})$. Concerning the choice of an appropriate notion of Cuntz-Pimsner covariance for product systems, there have been multiple attempts:

\begin{definition}\label{def:CNP cov}
Let $B$ be a C*-algebra and suppose $\CX$ is a compactly aligned product system of Hilbert bimodules over $P$ with coefficients in $A$.
\[\begin{array}{ll}
(\text{CP}_{F}) &\hspace*{-2mm} \text{A representation $\varphi: \CX \longrightarrow B$ is called \textit{Cuntz-Pimsner covariant}}\\ 
&\hspace*{-2mm}\text{in the sense of \cite{Fow2}*{Section 1}, if it satisfies}\vspace*{2mm}\\ 
&\hspace*{-2mm}\psi_{\varphi,p}(\phi_{p}(a)) = \varphi_{1_P}(a) \text{ for all } p \in P \text{ and } a \in \phi_{p}^{-1}(\CK(\CX_{p})) \subset A.\vspace*{2mm}\\
(\text{CP}) &\hspace*{-2mm} \text{A representation $\varphi: \CX \longrightarrow B$ is called \textit{Cuntz-Pimsner covariant}}\\
&\hspace*{-2mm}\text{in the sense of \cite{SY}*{Definition 3.9}, if the following holds:}\\
&\hspace*{-2mm}\text{Suppose $F \subset P$ is finite and we fix $k_{p} \in \CK(\CX_{p})$ for each $p \in F$.}\\
&\hspace*{-2mm}\text{If, for every $r \in P$, there is $s \geq r$ such that}\vspace*{2mm}\\
&\hspace*{-2mm}\begin{array}{llcll}
&\sum\limits_{p \in F}{\iota_{p}^{t}(k_{p})} &=& 0 \text{ holds for all $t \geq s$,}\vspace*{2mm}\\
\text{then }&\sum\limits_{p \in F}{\psi_{\varphi,p}(k_{p})} &=& 0 \text{ holds true.} 
\end{array}\end{array}\] 
\[\begin{array}{ll} 
(\text{CNP}) &\hspace*{-2mm} \text{A representation $\varphi: \CX \longrightarrow B$ is said to be \textit{Cuntz-Nica-Pimsner}}\\
&\hspace*{-2mm}\text{\textit{covariant}, if it is Nica covariant and $(\text{CP})$-covariant.}
\end{array}\]
\end{definition}
   
\noindent Fortunately, it was observed in \cite{SY}*{Proposition 5.1} that the different notions are closely related and in \cite{Fow2}*{Proposition 5.4}  that $(\text{CP}_{F})$ implies Nica covariance in the cases of interest to us, see \cite{Sta2}*{Subsection 3.2}.

\begin{proposition}\label{prop:PS for an IAD}
Suppose $(G,P,\theta)$ is an irreversible algebraic dynamical system. Let $(u_g)_{g \in G}$ denote the standard unitaries generating $C^*(G)$ and $\alpha$ be the action of $P$ on $C^*(G)$ induced by $\theta$, i.e. $\alpha_p(u_g) = u_{\theta_p(g)}$ for $p \in P$ and $g \in G$. Then $\CX_p := C^*(G)_{\alpha_p}$, with left action $\phi_p$ given by multiplication in $C^*(G)$ and inner product $\langle u_g,u_h \rangle_p = \chi_{\theta_p(G)}(g^{-1}h) u_{\theta_p^{-1}(g^{-1}h)}$ is an essential Hilbert bimodule. The union of all $\CX_p$ forms a product system $\CX$ over $P$ with coefficients in $C^*(G)$. $\CX$ is a product system with orthonormal bases. It is of finite type if $(G,P,\theta)$ is of finite type. 
\end{proposition}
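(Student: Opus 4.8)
The plan is to reduce all four assertions to a single structural identification of each fibre as a Hilbert-module direct sum, after which everything becomes routine. First I would record the bimodule operations forced by the data. The stated inner product $\langle u_g,u_h\rangle_p=\chi_{\theta_p(G)}(g^{-1}h)u_{\theta_p^{-1}(g^{-1}h)}$ is right $C^*(G)$-linear precisely for the right action $\xi\cdot a=\xi\,\alpha_p(a)$, i.e. $u_h\cdot u_k=u_{h\theta_p(k)}$, so $\CX_p$ is the correspondence associated with the $\ast$-endomorphism $\alpha_p$; the left action is ordinary multiplication, $\phi_p(u_g)u_h=u_{gh}$. Conjugate symmetry and $\langle\phi_p(a)\xi,\eta\rangle_p=\langle\xi,\phi_p(a^*)\eta\rangle_p$ are immediate on generators, and since $\phi_p(1_{C^*(G)})=\id$, the left action is unital and hence nondegenerate, so $\CX_p$ will be essential as soon as it is shown to be a genuine Hilbert module.

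For the Hilbert-module structure I would fix a transversal $\{g_i\}_{i\in G/\theta_p(G)}$ for the left cosets of $\theta_p(G)$ and check that $(u_{g_i})_i$ satisfies the two conditions of Definition~\ref{def:ONB HB}: orthonormality $\langle u_{g_i},u_{g_j}\rangle_p=\delta_{ij}1_{C^*(G)}$ holds because $g_i^{-1}g_j\in\theta_p(G)$ iff $i=j$, and the reconstruction $u_h=\sum_i u_{g_i}\langle u_{g_i},u_h\rangle_p$ holds because the single surviving term, for the coset of $h$, is $u_{g_i}\cdot u_{\theta_p^{-1}(g_i^{-1}h)}=u_h$. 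Grouping a generic finite combination by cosets writes it as $\sum_i u_{g_i}\cdot b_i$ with $b_i\in C^*(G)$ and yields $\langle\sum_i u_{g_i}b_i,\sum_j u_{g_j}b_j\rangle_p=\sum_i b_i^*b_i\ge 0$, which vanishes only when the element is $0$. Hence the inner product is positive definite and $\CX_p$ is isometric to the Hilbert-module direct sum $\bigoplus_{i\in G/\theta_p(G)}C^*(G)$, the map sending $u_{g_i}\cdot b$ to $b$ in the $i$-th summand. This identification supplies completeness for free, shows $\CX_p$ is a genuine essential Hilbert bimodule, and exhibits $(u_{g_i})_i$ as an orthonormal basis.

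Next I would assemble the product system on $\CX:=\bigsqcup_{p\in P}\CX_p$ with $\rho(\CX_p)=\{p\}$. Multiplication is defined on generators by $u_g\,u_h:=u_{g\theta_p(h)}\in\CX_{pq}$ for $u_g\in\CX_p,\,u_h\in\CX_q$; associativity follows from $\theta_p\theta_q=\theta_{pq}$ and $u_{1_G}\in\CX_{1_P}$ is a unit. Since $\theta_{1_P}=\id_G$ forces $\alpha_{1_P}=\id$, the fibre $\CX_{1_P}$ carries the standard operations, so $\CX_{1_P}\cong{}_{\id}C^*(G)_{\id}$, which is condition (2) of Definition~\ref{def:prod system}. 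For condition (3) I would verify that $u_g\otimes u_h\mapsto u_{g\theta_p(h)}$ extends to an isomorphism of Hilbert bimodules $\CX_p\otimes_{C^*(G)}\CX_q\cong\CX_{pq}$: a direct computation gives $\langle u_g\otimes u_h,u_{g'}\otimes u_{h'}\rangle=\langle u_{g\theta_p(h)},u_{g'\theta_p(h')}\rangle_{pq}$, using that $\theta_{pq}^{-1}\theta_p$ restricts to $\theta_q^{-1}$ on $\theta_q(G)$, so the map preserves inner products; it is visibly left-linear with dense range, hence a surjective isometry and thus a bimodule isomorphism, while the unital case $\overline{\phi_q(C^*(G))\CX_q}=\CX_q\cong\CX_{1_P}\otimes_{C^*(G)}\CX_q$ is trivial. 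I would stress here that only injectivity of the $\theta_p$ and the homomorphism property $p\mapsto\theta_p$ enter; the independence hypothesis of Definition~\ref{def:IAD} plays no role in the construction and surfaces only later, through Nica covariance.

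Finally, the remaining claims are immediate: each $\CX_p$ carries the orthonormal basis $(u_{g_i})_{i\in G/\theta_p(G)}$ produced above, so $\CX$ is a product system with orthonormal bases, and if $(G,P,\theta)$ is of finite type then $[G:\theta_p(G)]<\infty$ makes every such basis finite, whence $\CK(\CX_p)=\CL(\CX_p)$ by Lemma~\ref{lem:ONB gives matrix units}. The one genuine subtlety I anticipate is that the module norm $\|\langle\xi,\xi\rangle_p\|^{1/2}$ does \emph{not} coincide with the C*-norm on $C^*(G)$ (already visible for $G=\Z$, $\theta=\times 2$, where $\CX_p$ becomes a two-fold direct sum of $C^*(G)$), so one must genuinely pass to the module completion rather than reuse the C*-norm. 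The direct-sum identification of the second paragraph is exactly the device that makes this harmless and simultaneously delivers positivity, completeness, and the orthonormal basis in one stroke.
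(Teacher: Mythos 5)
Your proposal is correct and follows the same route as the paper: the heart of both arguments is the observation that a transversal for $G/\theta_p(G)$ yields an orthonormal basis of $\CX_p$ via $\langle u_{g_i},u_h\rangle_p=\chi_{\theta_p(G)}(g_i^{-1}h)\,u_{\theta_p^{-1}(g_i^{-1}h)}$ and the reconstruction identity $u_{g_i}\alpha_p(\langle u_{g_i},u_h\rangle_p)=u_h$. The paper declares the remaining verifications (positivity and completeness of the fibres, essentialness, the multiplication $\CX_p\otimes\CX_q\cong\CX_{pq}$) straightforward and omits them; your direct-sum identification $\CX_p\cong\bigoplus_{G/\theta_p(G)}C^*(G)$ is a clean way of supplying exactly those omitted details.
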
 
\begin{proof}
It is straightforward to show that $\CX$ defines a product system of essential Hilbert bimodules and we omit the details. For $p \in P$, we claim that every complete set of representatives $(g_i)_{i \in I}$ for $G/\theta_p(G)$ gives rise to an orthonormal basis of $\CX_p$. Indeed, if we fix such a transversal $(g_i)_{i \in I}$ and pick $g \in G$, then 
$\langle u_{g_i},u_g \rangle_p = \chi_{\theta_p(G)}(g_i^{-1}g) u_{\theta_p^{-1}(g_i^{-1}g)}$ equals $0$ for all but one $j \in I$, namely the one representing the left-coset $[g]$ in $G/\theta_p(G)$. Thus, the family $(u_{g_i})_{i \in I} \subset \CX_p$ consists of orthonormal elements with respect to $\langle \cdot,\cdot \rangle_p$, and $u_{g_i}\alpha_p\left(\langle u_{g_i},u_g \rangle\right) = \delta_{i j} u_g$, so $(u_{g_i})_{i \in I}$ satisfies \ref{def:ONB HB}~(2).
\end{proof}

\begin{remark}\label{rem:dichotomy finite - infinity type cp op}
If $(G,P,\theta)$ is an irreversible algebraic dynamical system and $\CX$ denotes the associated product system from Proposition~\ref{prop:PS for an IAD}, then we have already seen in the proof of Proposition~\ref{prop:PS for an IAD} that $\CX_p$ has a finite orthonormal basis if $[G:\theta_p(G)]$ is finite. Since the left action is given by left multiplication, in other words, the elements of $C^*(G)$ act as diagonal operators, we have 
\[\phi_p^{-1}(\CK(\CX_p)) = \begin{cases} C^*(G),& \text{if $[G:\theta_p(G)]$ is finite, }\\ 0,& \text{else.}\end{cases}\]
\end{remark}

\begin{lemma}\label{lem:rank-one proj from ONB - PS for IAD}
Suppose $(G,P,\theta)$ is an irreversible algebraic dynamical system and $\CX$ denotes the associated product system from Proposition~\ref{prop:PS for an IAD}. Then the rank-one projection $\Theta_{u_g,u_g} \in \CK(\CX_p)$ depends only on the equivalence class of $g$ in $G/\theta_p(G)$. Moreover, if $\varphi$ is a Nica covariant representation of $\CX$, then 
\[\begin{array}{l}
\psi_{\varphi,p}(\Theta_{u_{g_1},u_{g_1}})\psi_{\varphi,q}(\Theta_{u_{g_2},u_{g_2}})\vspace*{2mm}\\
\hspace*{4mm}= \begin{cases} \psi_{\varphi,p \vee q}(\Theta_{u_{g_3},u_{g_3}}) &\text{if $g_1^{-1}g_2 = \theta_p(g_3)\theta_q(g_4)$ for some $g_3,g_4 \in G$,}\\ 0& \text{else.}\end{cases}
\end{array}\]
holds for all $g_1,g_2 \in G$ and $p,q \in P$.
\end{lemma}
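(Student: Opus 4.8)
The plan is to dispatch the coset-invariance claim first and then derive the product formula by pushing both factors into the fibre over $p\vee q$ via Nica covariance.

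For the first claim, I would argue directly from the bimodule structure of $\CX_p = C^*(G)_{\alpha_p}$. Writing $g' = g\theta_p(k)$ with $k\in G$, the $\alpha_p$-twisted right action gives $u_{g'} = u_g\cdot u_k$ inside $\CX_p$, where $u_k$ is a \emph{unitary} of $C^*(G)$; combined with the identity $\langle u_g\cdot u_k,\zeta\rangle_p = u_k^*\langle u_g,\zeta\rangle_p$ (which one checks from the explicit inner product of Proposition~\ref{prop:PS for an IAD}), this yields
\[\Theta_{u_{g'},u_{g'}}(\zeta) = (u_g\cdot u_k)\cdot\big(u_k^*\langle u_g,\zeta\rangle_p\big) = u_g\cdot\big(u_ku_k^*\langle u_g,\zeta\rangle_p\big) = \Theta_{u_g,u_g}(\zeta),\]
since $u_ku_k^* = 1$. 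Hence $\Theta_{u_g,u_g}$ depends only on the left coset $g\theta_p(G)$, using nothing beyond $u_k$ being unitary.

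For the product formula I would first apply Nica covariance to write $\psi_{\varphi,p}(\Theta_{u_{g_1},u_{g_1}})\psi_{\varphi,q}(\Theta_{u_{g_2},u_{g_2}}) = \psi_{\varphi,p\vee q}\big(\iota_p^{p\vee q}(\Theta_{u_{g_1},u_{g_1}})\,\iota_q^{p\vee q}(\Theta_{u_{g_2},u_{g_2}})\big)$, reducing everything to a computation in $\CK(\CX_{p\vee q})$. The key technical input is the amplification of a rank-one projection: choosing a transversal $(a_j)$ for $G/\theta_{p'}(G)$ with $p' := p^{-1}(p\vee q)$, which is an orthonormal basis of $\CX_{p'}$ by Proposition~\ref{prop:PS for an IAD}, the reconstruction identity together with the product-system multiplication $u_{g_1}\cdot u_{a_j} = u_{g_1\theta_p(a_j)}$ gives $\iota_p^{p\vee q}(\Theta_{u_{g_1},u_{g_1}}) = \sum_j \Theta_{u_{g_1\theta_p(a_j)},u_{g_1\theta_p(a_j)}}$ on $\CX_{p\vee q}$, and symmetrically $\iota_q^{p\vee q}(\Theta_{u_{g_2},u_{g_2}}) = \sum_l \Theta_{u_{g_2\theta_q(b_l)},u_{g_2\theta_q(b_l)}}$ for a transversal $(b_l)$ of $G/\theta_{q'}(G)$, $q' := q^{-1}(p\vee q)$.

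Next I would multiply the two sums. The representatives $g_1\theta_p(a_j)$ (resp. $g_2\theta_q(b_l)$) run over the distinct $\theta_{p\vee q}(G)$-cosets contained in $g_1\theta_p(G)$ (resp. $g_2\theta_q(G)$), so by the first claim and the rank-one rule $\Theta_{u_c,u_c}\Theta_{u_d,u_d} = \delta_{[c],[d]}\,\Theta_{u_c,u_c}$, with $[\cdot]$ the class in $G/\theta_{p\vee q}(G)$, the double sum collapses to the diagonal indexed by the $\theta_{p\vee q}(G)$-cosets lying in $g_1\theta_p(G)\cap g_2\theta_q(G)$. Here Lemma~\ref{lem:IAD intersection formula} does the decisive work: this intersection is empty precisely when $g_1^{-1}g_2\notin\theta_p(G)\theta_q(G)$, giving $0$, and otherwise it is the \emph{single} coset $g_1\theta_p(h')\theta_{p\vee q}(G)$ with $g_1\theta_p(h')\in g_2\theta_q(G)$. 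Thus exactly one term survives, and with the factorisation $g_1^{-1}g_2 = \theta_p(g_3)\theta_q(g_4)$ one may take $h' = g_3$, so the product equals $\psi_{\varphi,p\vee q}(\Theta_{u_{g_1\theta_p(g_3)},u_{g_1\theta_p(g_3)}})$, matching the claimed right-hand side once the representative of the surviving class is fixed as in Lemma~\ref{lem:e_(g,p) commute}. The main obstacle is the amplification bookkeeping, namely establishing the two displayed sum expansions and then recognising via the intersection formula that only a single coset survives; the potential infiniteness of the sums when $[G:\theta_{p'}(G)]$ is infinite is only a matter of strict convergence and strict continuity of multiplication, and causes no genuine difficulty since at most one summand is nonzero (in the finite type case all sums are finite).
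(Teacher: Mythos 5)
Your proposal is correct and takes essentially the same route as the paper: the coset-invariance of $\Theta_{u_g,u_g}$ is checked directly from the twisted bimodule structure, and the product formula is obtained by applying Nica covariance, expanding $\iota_p^{p\vee q}(\Theta_{u_{g_1},u_{g_1}})$ and $\iota_q^{p\vee q}(\Theta_{u_{g_2},u_{g_2}})$ over orthonormal bases coming from transversals, and observing that at most one term of the double sum survives. The only (cosmetic) difference is that you delegate the emptiness/uniqueness dichotomy for $g_1\theta_p(G)\cap g_2\theta_q(G)$ to Lemma~\ref{lem:IAD intersection formula}, whereas the paper re-derives that uniqueness in place from the independence condition; your observation that the surviving projection is $\Theta_{u_{g_1\theta_p(g_3)},u_{g_1\theta_p(g_3)}}$ (rather than literally $\Theta_{u_{g_3},u_{g_3}}$ as printed) agrees with what the paper's own proof actually produces.
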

\begin{proof}
If $g_1 = g\theta_p(g_2)$ for some $g_2 \in G$, then 
\[\Theta_{u_{g_1},u_{g_1}}(u_h) = \chi_{\theta_p(G)}(\theta_p(g_2^{-1})g^{-1}h) u_{h} = \chi_{\theta_p(G)}(g^{-1}h) u_{h} = \Theta_{u_g,u_g}(u_h)\] 
for all $h \in G$ and hence $\Theta_{u_{g_1},u_{g_1}} = \Theta_{u_g,u_g}$. For the second claim, Nica covariance of $\iota_{\CO_\CX}$ implies  
\[\psi_{\varphi,p}(\Theta_{u_{g_1},u_{g_1}})\psi_{\varphi,q}(\Theta_{u_{g_2},u_{g_2}}) = \psi_{\varphi,p}(\iota_p^{p \vee q}(\Theta_{u_{g_1},u_{g_1}})\iota_q^{p \vee q}(\Theta_{u_{g_2},u_{g_2}})).\]
If we denote $p':= (p \wedge q)^{-1}p$ and $q':= (p \wedge q)^{-1}q$, then 
\[\begin{array}{c} \iota_p^{p \vee q}(\Theta_{u_{g_1},u_{g_1}}) = \sum\limits_{[g_3] \in G/\theta_{q'}(G)} \Theta_{u_{g_1\theta_p(g_3)},u_{g_1\theta_p(g_3)}} \in \CL(\CX_{p \vee q}) \end{array}\]
and
\[\begin{array}{c} \iota_q^{p \vee q}(\Theta_{u_{g_2},u_{g_2}}) = \sum\limits_{[g_4] \in G/\theta_{p'}(G)} \Theta_{u_{g_2\theta_q(g_4)},u_{g_2\theta_q(g_4)}} \in \CL(\CX_{p \vee q}) \end{array}\]
hold. We observe that  
\[\Theta_{u_{g_1\theta_p(g_3)},u_{g_1\theta_p(g_3)}}\Theta_{u_{g_2\theta_q(g_4)},u_{g_2\theta_q(g_4)}}\]
is non-zero if and only if $[g_1\theta_p(g_3)] = [g_2\theta_q(g_4)] \in G/\theta_{p \vee q}(G)$. In particular, this is always zero if $g_1^{-1}g_2 \notin \theta_p(G)\theta_q(G)$. Let us assume that there are $g_3,\dots,g_8 \in G$ such that
\[\begin{array}{lcl}
\theta_p(g_3^{-1})g_1^{-1}g_2\theta_q(g_4) &=& \theta_{p \vee q}(g_7)\\
&\text{and}\\
\theta_p(g_5^{-1})g_1^{-1}g_2\theta_q(g_6) &=& \theta_{p \vee q}(g_8).
\end{array}\] 
Rearranging the first equation to insert it into the second, we get 
\[\theta_p(g_5^{-1}g_3)\theta_{p \vee q}(g_7)\theta_q(g_4^{-1}g_6) = \theta_{p \vee q}(g_8).\]
By injectivity of $\theta_{p \wedge q}$ this is equivalent to 
\[\theta_{p'}(g_5^{-1}g_3)\theta_{(p \wedge q)^{-1}(p \vee q)}(g_7)\theta_{q'}(g_4^{-1}g_6) = \theta_{(p \wedge q)^{-1}(p \vee q)}(g_8).\]
From this equation we can easily deduce $g_5^{-1}g_3 \in \theta_{q'}(G)$ and $g_4^{-1}g_6 \in \theta_{p'}(G)$ from independence of $\theta_{p'}$ and $\theta_{q'}$, see Definition~\ref{def:IAD}~(C). Thus, if there are $g_3,g_4 \in G$ such that $\theta_p(g_3^{-1})g_1^{-1}g_2\theta_q(g_4) \in \theta_{p \vee q}(G)$, then they are unique up to $\theta_{q'}(G)$ and $\theta_{p'}(G)$, respectively. This completes the proof.
\end{proof}

\begin{theorem}\label{thm:isom ad-hoc PS for IADoFT}
Let $(G,P,\theta)$ be an irreversible algebraic dynamical system of finite type and $\CX$ the product system from Proposition~\ref{prop:PS for an IAD}. Then $u_gs_p \mapsto  \iota_{\CO_{\CX},p}(u_g)$ defines an isomorphisms $\varphi: \CO[G,P,\theta] \longrightarrow \CO_{\CX}$.
\end{theorem}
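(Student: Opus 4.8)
The plan is to construct both the map $\varphi$ and its inverse explicitly and to check that they are mutually inverse on generators. This circumvents any appeal to a gauge-invariant uniqueness theorem, and in particular the delicate question of whether $C^*(G)$ embeds into $\CO[G,P,\theta]$ (which need not hold verbatim for non-amenable $G$).

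For the inverse $\psi\colon\CO_\CX\longrightarrow\CO[G,P,\theta]$, I would define $\pi\colon\CX\longrightarrow\CO[G,P,\theta]$ fibrewise by $\pi_p(u_g):=u_gs_p$ and verify that $\pi$ is a Toeplitz representation in the sense of Definition~\ref{def:rep of prod sys}: property~(1) holds since $\pi_{1_P}$ is the canonical inclusion of $C^*(G)$; property~(3) is exactly $\pi_p(u_g)^*\pi_p(u_h)=s_p^*u_{g^{-1}h}s_p=\chi_{\theta_p(G)}(g^{-1}h)u_{\theta_p^{-1}(g^{-1}h)}=\pi_{1_P}(\langle u_g,u_h\rangle_p)$ from Remark~\ref{rem:O[G,P,theta]}~c); and property~(4) follows from (CNP~1) via $\pi_p(u_g)\pi_q(u_h)=u_gu_{\theta_p(g)}\,\cdots=\pi_{pq}(u_{g\theta_p(h)})$. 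For Cuntz--Pimsner covariance I use that $(G,P,\theta)$ is of finite type, so $\phi_p^{-1}(\CK(\CX_p))=C^*(G)$ by Remark~\ref{rem:dichotomy finite - infinity type cp op} and a transversal of $G/\theta_p(G)$ gives a finite orthonormal basis. Writing $\phi_p(u_g)=\sum_{[g_i]}\Theta_{u_{gg_i},u_{g_i}}$ yields $\psi_{\pi,p}(\phi_p(u_g))=u_g\sum_{[g_i]}e_{g_i,p}=u_g=\pi_{1_P}(u_g)$, where (CNP~3) is used in the middle equality. Thus $\pi$ is $(\mathrm{CP}_F)$-covariant, which by \cite{Fow2}*{Proposition 5.4} entails Nica covariance and, for the regular finite-type product system at hand, full $(\mathrm{CNP})$-covariance by \cite{SY}*{Proposition 5.1}; compare \cite{Sta2}*{Subsection 3.2}. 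The universal property of $\CO_\CX$ then furnishes $\psi$ with $\psi(\iota_{\CO_{\CX},p}(u_g))=u_gs_p$.

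For $\varphi$ itself I set $\varphi(u_g):=\iota_{\CO_{\CX},1_P}(u_g)$ and $\varphi(s_p):=\iota_{\CO_{\CX},p}(u_{1_G})$; the latter is an isometry because $\langle u_{1_G},u_{1_G}\rangle_p=u_{1_G}$, and the former is a unitary representation since $\iota_{\CO_{\CX},1_P}$ is a $\ast$-homomorphism. To apply the universal property of $\CO[G,P,\theta]$ I must check (CNP~1)--(CNP~3). Relation (CNP~1) is immediate from property~(4) together with the left and right actions on $\CX_p$, both sides being $\iota_{\CO_{\CX},p}(u_{\theta_p(g)})$. Relation (CNP~3) follows from $(\mathrm{CP}_F)$-covariance of $\iota_{\CO_{\CX}}$: using Lemma~\ref{lem:ONB gives matrix units} and a finite transversal, $\sum_{[g]}\varphi(e_{g,p})=\sum_{[g]}\psi_{\iota_{\CO_{\CX}},p}(\Theta_{u_g,u_g})=\psi_{\iota_{\CO_{\CX}},p}(1_{\CK(\CX_p)})=1$. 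The main obstacle is (CNP~2). Here I must compute $\varphi(s_p^*u_gs_q)=\iota_{\CO_{\CX},p}(u_{1_G})^*\iota_{\CO_{\CX},q}(u_g)$ and match it with $\varphi$ of the right-hand side of (CNP~2), which unwinds to $\iota_{\CO_{\CX},q'}(u_{g_1})\,\iota_{\CO_{\CX},p'}(u_{g_2^{-1}})^*$ for $q'=(p\wedge q)^{-1}q$ and $p'=(p\wedge q)^{-1}p$. I would evaluate the left-hand side by inserting $1=\sum_{[d_k]}\iota_{\CO_{\CX},p'}(u_{d_k})\iota_{\CO_{\CX},p'}(u_{d_k})^*$, absorbing via property~(4) into $\iota_{\CO_{\CX},q}(u_g)\iota_{\CO_{\CX},p'}(u_{d_k})=\iota_{\CO_{\CX},p\vee q}(u_{g\theta_q(d_k)})$, and then pairing $\iota_{\CO_{\CX},p}(u_{1_G})^*\iota_{\CO_{\CX},p\vee q}(u_m)$ against the factorisation $\CX_{p\vee q}\cong\CX_p\otimes\CX_{q'}$, which is nonzero exactly when $m\in\theta_p(G)$ and then equals $\iota_{\CO_{\CX},q'}(u_{\theta_p^{-1}(m)})$. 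The independence identity $\theta_p(G)\cap\theta_q(G)=\theta_{p\vee q}(G)$ forces precisely one index $k_0$ to survive when $g\in\theta_p(G)\theta_q(G)$ (and none otherwise), and a coset bookkeeping identifies that term with $\iota_{\CO_{\CX},q'}(u_{g_1})\iota_{\CO_{\CX},p'}(u_{g_2^{-1}})^*$. This computation, which runs parallel to Lemma~\ref{lem:rank-one proj from ONB - PS for IAD} and relies on finiteness of the index for the inserted sum to be finite, is the heart of the argument.

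Finally, $\varphi$ and $\psi$ are mutually inverse. On generators $\psi(\varphi(u_g))=u_g$ and $\psi(\varphi(s_p))=s_p$, while $\varphi(\psi(\iota_{\CO_{\CX},p}(u_g)))=\varphi(u_gs_p)=\iota_{\CO_{\CX},1_P}(u_g)\iota_{\CO_{\CX},p}(u_{1_G})=\iota_{\CO_{\CX},p}(u_g)$. Since $(u_g,s_p)$ generate $\CO[G,P,\theta]$ and the elements $\iota_{\CO_{\CX},p}(u_g)$ generate $\CO_\CX$ (each fibre being spanned by the $u_g$), both composites are the identity, so $\varphi$ is an isomorphism; surjectivity may alternatively be read off from Lemma~\ref{lem:dense span}.
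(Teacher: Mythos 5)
Your proposal is correct and follows essentially the same route as the paper: both directions are obtained by verifying that the generators on each side satisfy the defining relations of the other (the relations (CNP 1)--(CNP 3) for $\varphi$, and Toeplitz plus $(\mathrm{CP}_F)$-covariance for the inverse, with finite type supplying $\phi_p^{-1}(\CK(\CX_p))=C^*(G)$ and the finite orthonormal bases), and then invoking the two universal properties to see the maps are mutually inverse on generators. The only cosmetic difference is that you verify (CNP 2) by an explicit transversal/factorisation computation where the paper simply applies Lemma~\ref{lem:rank-one proj from ONB - PS for IAD}, which packages exactly that argument.
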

\begin{proof}
The idea is to exploit the respective universal property on both sides. We begin by showing that $(\iota_{\CO_{\CX},1_P}(u_g))_{g \in G}$ is a unitary representation of $G$ and $(\iota_{\CO_{\CX},p}(1_{C^*(G)}))_{p \in P}$ is a representation of the monoid $P$ by isometries satisfying (CNP 1)--(CNP 3), compare Definition~\ref{def:O-algebra ad-hoc}. $\iota_{\CO_{\CX},1_P}$ is a $\ast$-homomorphism, so we get a unitary representation of $G$. In addition, 
\[\begin{array}{lcl}
\iota_{\CO_{\CX},p}(1_{C^*(G)})^*\iota_{\CO_{\CX},p}(1_{C^*(G)}) &=& \iota_{\CO_{\CX},1_P}(\langle 1_{C^*(G)},1_{C^*(G)} \rangle_p)\vspace*{2mm}\\ 
&=& \iota_{\CO_{\CX},1_P}(1_{C^*(G)}) = 1_{\CO_{\CX}}
\end{array}\]
and 
\[\iota_{\CO_{\CX},p}(1_{C^*(G)})\iota_{\CO_{\CX},q}(1_{C^*(G)}) = \iota_{\CO_{\CX},pq}(1_{C^*(G)}\alpha_p(1_{C^*(G)})) = \iota_{\CO_{\CX},pq}(1_{C^*(G)})\]
show that we have a representation of $P$ by isometries. (CNP 1) follows from 
\[\iota_{\CO_{\CX},p}(1_{C^*(G)})\iota_{\CO_{\CX},1_P}(u_g) = \iota_{\CO_{\CX},p}(u_{\theta_p(g)}) = \iota_{\CO_{\CX},1_P}(u_{\theta_p(g)})\iota_{\CO_{\CX},p}(1_{C^*(G)}).\]
Let $p,q \in P$ and $g \in G$. Then (CNP 2) follows easily from applying Lemma~\ref{lem:rank-one proj from ONB - PS for IAD} to
\[\begin{array}{l}
\iota_{\CO_{\CX},p}(1_{C^*(G)})^*\iota_{\CO_{\CX},1_P}(u_g)\iota_{\CO_{\CX},q}(1_{C^*(G)}) \vspace*{2mm}\\ \hspace*{4mm}= \iota_{\CO_{\CX},p}(1_{C^*(G)})^*\psi_{\iota_{\CO_{\CX}},p}(\Theta_{1,1})\psi_{\iota_{\CO_{\CX}},q}(\Theta_{u_g,u_g})\iota_{\CO_{\CX},q}(u_g).
\end{array}\] 
Finally, we observe that 
\[\iota_{\CO_{\CX},1_P}(u_g)\iota_{\CO_{\CX},p}(1_{C^*(G)})\iota_{\CO_{\CX},p}(1_{C^*(G)})^*\iota_{\CO_{\CX},1_P}(u_g)^* = \psi_{\iota_{\CO_{\CX}},p}(\Theta_{u_g,u_g})\]
and the computation
\[\begin{array}{lclcl}
\sum\limits_{[g] \in G/\theta_p(G)}\psi_{\iota_{\CO_{\CX}},p}(\Theta_{u_g,u_g}) &=& \psi_{\iota_{\CO_{\CX}},p}(1_{\CL(\CX)}) &=& \psi_{\iota_{\CO_{\CX}},p}(\phi_p(1_{C^*(G)}))\vspace*{2mm}\\
&=& \iota_{\CO_{\CX},1_P}(1_{C^*(G)}) &=& 1_{\CO_{\CX}}
\end{array}\]
yield (CNP 3). Thus we conclude that $\varphi:\CO[G,P,\theta] \longrightarrow \CO_\CX$ defines a surjective $\ast$-homomorphism. For the reverse direction, we show that 
\[\begin{array}{rcl}
\varphi_{\text{CNP}}: \CX &\longrightarrow& \CO[G,P,\theta]\\
\xi_{p,g} &\mapsto& u_gs_p
\end{array}\]
defines a (CNP)-covariant representation of $\CX$, where $\xi_{p,g}$ denotes the representative for $u_g$ in $\CX_p$. To do so, we have to verify (1)--(4) from Definition~\ref{def:rep of prod sys} and the (CNP)-covariance condition. (1) and (2) are obvious. Using (CNP 2) to compute
\[\begin{array}{lcl}
\varphi_{\text{CNP},p}(\xi_{p,g_1})^*\varphi_{\text{CNP},p}(\xi_{p,g_2}) &=& s_p^*u_{g_1^{-1}g_2}s_p\vspace*{2mm}\\ 
&=& \chi_{\theta_p(G)}(g_1^{-1}g_2) u_{\theta_p^{-1}(g_1^{-1}g_2)}\vspace*{2mm}\\ 
&=& \varphi_{\text{CNP},1_P}(\langle \xi_{p,g_1},\xi_{p,g_2} \rangle),
\end{array}\]
we get (3). (4) follows from (CNP 1) as
\[\begin{array}{lcl}
\varphi_{\text{CNP},p}(\xi_{p,g_1})\varphi_{\text{CNP},q}(\xi_{q,g_2}) &=& u_{g_1}s_pu_{g_2}s_q\vspace*{2mm}\\ 
&=& u_{g_1\theta_p(g_2)}s_{pq}\vspace*{2mm}\\ 
&=& \varphi_{\text{CNP},pq}(\xi_{p,g_1}\alpha_p(\xi_{q,g_2})).
\end{array}\]
Thus, we are left with the (CNP)-covariance condition. But since $\CX$ is a product system of finite type, see Proposition~\ref{prop:PS for an IAD}, we only have to show that $\varphi_{\text{CNP}}$ is $(CP_F)$-covariant due to \cite{Sta2}*{Corollary 3.21}. Noting that $\varphi_p^{-1}(\CK(\CX_p)) = C^*(G)$ for all $p \in P$, we obtain
\[\begin{array}{lcl}
\psi_{\varphi_{\text{CNP}},p}(\phi_p(u_g)) &=& \psi_{\varphi_{\text{CNP}},p}\left(\sum\limits_{[h] \in G/\theta_p(G)} \Theta_{u_{gh},u_h}\right)\vspace*{2mm}\\
&=& u_g \sum\limits_{[h] \in G/\theta_p(G)} e_{h,p}\vspace*{2mm}\\
&=& u_g = \varphi_{\text{CNP},1_P}(\xi_{1_P,g}).
\end{array}\]
Thus, $\varphi_{\text{CNP}}$ is a (CNP)-covariant representation of $\CX$. By the universal property of $\CO_{\CX}$, there exists a $\ast$-homomorphism $\overline{\varphi}_{\text{CNP}}: \CO_{\CX} \longrightarrow \CO[G,P,\theta]$ such that $\overline{\varphi}_{\text{CNP}} \circ \iota_{\CO_{\CX}} = \varphi_{\text{CNP}}$. It is apparent that $\overline{\varphi}_{\text{CNP}}$ and $\varphi$ are inverse to each other, so $\varphi$ is an isomorphism.
\end{proof}


\appendix
\section{Crossed products by semidirect products}\label{sec6}
\noindent Within this section, we will establish a result about viewing a crossed product of a C*-algebra by a semidirect product of discrete, left cancellative monoids as an iterated crossed product, see Theorem~\ref{thm:cr prod by sd prod as it cr prod}. This extends the well-known result for semidirect products of locally compact groups in the discrete case, see \cite{Wil}*{Proposition~3.11}, and is essential for the proof of Corollary~\ref{cor:F cong D rtimes G-general IAD}.

For convenience, we will restrict our attention to the case of unital coefficient algebras and include the basic definitions for semigroup crossed products based on covariant pairs of representations. We refer to \cite{Lar} for a more extensive treatment of the subject.

All semigroups will be left cancellative and discrete. In the following, let $\text{Isom}(B)$ denote the semigroup of isometries in a unital C*-algebra $B$.

\begin{definition}\label{def:cov pairs}
Let $S$ be a semigroup and $A$ a unital C*-algebra with an $S$-action $\alpha$ by endomorphisms. A \emph{covariant pair} $(\pi_A,\pi_S)$ for $(A,S,\alpha)$ is given by a unital C*-algebra $B$ together with a unital $\ast$-homomorphism $\pi_A: A \longrightarrow B$ and a semigroup homomorphism $\pi_S: S \longrightarrow \text{Isom}(B)$ subject to the covariance condition:
\[\pi_S(s)\pi_A(a)\pi_S(s)^* = \pi_A(\alpha_s(a)) \text{ for all } a \in A,s \in S.\]
\end{definition}

\begin{definition}\label{def:semigroup cr prod}
Let $S$ be a semigroup and $A$ a unital C*-algebra with an $S$-action $\alpha$ by endomorphisms. The \emph{crossed product} for $(A,S,\alpha)$, denoted by $A \rtimes_\alpha S$, is the C*-algebra generated by a covariant pair $(\iota_A,\iota_S)$ which is universal in the sense that whenever $(\pi_A,\pi_S)$ is a covariant pair for $(A,S,\alpha)$, it factors through $(\iota_A,\iota_S)$. That is to say, there is a surjective $\ast$-homomorphism $\overline{\pi}: A \rtimes_\alpha S \longrightarrow C^*(\pi_A(A),\pi_S(S))$ satisfying $\pi_A = \overline{\pi} \circ \iota_A$ and $\pi_S = \overline{\pi} \circ \iota_S$.
$A \rtimes_\alpha S$ is uniquely determined up to canonical isomorphism by this universal property.
\end{definition}

\noindent This crossed product may be $0$, see \cite{Sta}*{Example 2.1(a)}. But it is known that the coefficient algebra $A$ embeds into $A \rtimes_\alpha S$ provided that $S$ acts by injective endomorphisms and is right-reversible, i.e. $Ss \cap St \neq \emptyset$ for all $s,t \in S$, see \cite{DFK}*{Lemma 5.2.1}. 

Suppose that $T$ is a semigroup which acts on another semigroup $S$ by semigroup homomorphisms $\theta_t$. Then we can form the semidirect product $S{\rtimes_\theta}T$, which is the semigroup given by $S{\times}T$ with $ax+b$-composition rule:
\[(s,t)(s',t') = (s\theta_t(s'),tt')\]
Now suppose further that $S$ and $T$ are monoids and that $\alpha$ is an action of $S{\rtimes_\theta}T$ on a unital C*-algebra $A$. Then the semigroup crossed product $A \rtimes_\alpha (S{\rtimes_\theta}T)$ is given by a unital $\ast$-homomorphism 
\[\iota_{A,S{\rtimes_\theta}T}: A \longrightarrow A \rtimes_\alpha (S{\rtimes_\theta}T)\]
and a semigroup homomorphism
\[\iota_{S{\rtimes_\theta}T}: S{\rtimes_\theta}T \longrightarrow \text{Isom}(A \rtimes_\alpha (S{\rtimes_\theta}T)).\]
Of course, we can also consider $A \rtimes_{\alpha|_S} S$ given by a unital $\ast$-homomorphism $\iota_{A,S}: A \longrightarrow A \rtimes_{\alpha|_S} S$ and a homomorphism $\iota_{S}: S \longrightarrow \text{Isom}(A \rtimes_{\alpha|_S} S)$. A natural question in this situation is whether $\alpha$ and $\theta$ give rise to a $T$-action $\tilde{\alpha}$ on $A \rtimes_{\alpha|_S} S$. The next lemma provides a positive answer for the case where $\alpha$ satisfies $\begin{array}{c} \{1_A - \alpha_{(s,1_T)}(1_A) \mid s \in S\} \subset \bigcap_{t \in T}\ker\alpha_{(1_S,t)}. \end{array}$ For the sake of readability, let $p_{(s,t)} := \iota_{A,S}(\alpha_{(s,t)}(1_A))$ for $s \in S,t \in T$ and we will simply write $p_t$ for $p_{(1_S,t)}$. We observe that the aforementioned condition is equivalent to $p_{(\theta_t(s),t)} = p_t \text{ for all } s \in S,t \in T$.

\begin{lemma}\label{lem:sgp action for the it process}
Suppose that $S$ and $T$ are monoids with a $T$-action $\theta$ on $S$ by semigroup homomorphisms. Let $\alpha$ be an action of $S \rtimes_\theta T$ on a unital C*-algebra $A$ by endomorphisms. For $t \in T$, let
\[\tilde{\alpha}_t (\iota_{A,S}(a)\iota_S(s)) := \iota_{A,S}(\alpha_{(1_S,t)}(a))\iota_S(\theta_t(s)) \text{ for } a \in A, s \in S.\]
$\tilde{\alpha}_t$ is an endomorphism from $A\rtimes_{\alpha|_S} S \longrightarrow p_t(A\rtimes_{\alpha|_S} S) p_t$ provided that
\[1_A - \alpha_{(s,1_T)}(1_A) \in \ker\alpha_{(1_S,t)} \text{ for all } s \in S.\]
In particular, if this holds for all $t \in T$, i.e.
\[\begin{array}{c} 1_A - \alpha_{(s,1_T)}(1_A) \in \bigcap_{t \in T}\ker\alpha_{(1_S,t)} \text{ for all } s \in S, \end{array}\]
then $\tilde{\alpha}$ defines an action of $T$ on $A \rtimes_{\alpha|_S} S$.
\end{lemma}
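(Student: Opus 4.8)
The plan is to invoke the universal property of the semigroup crossed product $A \rtimes_{\alpha|_S} S$ from Definition~\ref{def:semigroup cr prod}. Fix $t \in T$ and regard the corner $p_t(A \rtimes_{\alpha|_S} S)p_t$ as a unital C*-algebra with unit $p_t$. I would construct a covariant pair $(\pi_A^{(t)},\pi_S^{(t)})$ for the system $(A,S,\alpha|_S)$ with target this corner, by setting $\pi_A^{(t)}(a) := \iota_{A,S}(\alpha_{(1_S,t)}(a))$ and $\pi_S^{(t)}(s) := p_t\,\iota_S(\theta_t(s))$. Since $\alpha_{(1_S,t)}$ is a $\ast$-endomorphism and $\iota_{A,S}$ a $\ast$-homomorphism, $\pi_A^{(t)}$ is a $\ast$-homomorphism with $\pi_A^{(t)}(1_A) = p_t$, and its range lies in the corner because $\alpha_{(1_S,t)}(a)$ already lies in $\alpha_{(1_S,t)}(1_A)\,A\,\alpha_{(1_S,t)}(1_A)$.

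The key computation is an intertwining identity extracted from covariance: multiplying the covariance relation $\iota_S(s')\iota_{A,S}(a)\iota_S(s')^* = \iota_{A,S}(\alpha_{(s',1_T)}(a))$ on the right by $\iota_S(s')$ and using $\iota_S(s')^*\iota_S(s') = 1$ yields $\iota_S(s')\iota_{A,S}(a) = \iota_{A,S}(\alpha_{(s',1_T)}(a))\iota_S(s')$. Applying this with $s' = \theta_t(s)$ and $a = \alpha_{(1_S,t)}(1_A)$, together with the semidirect-product identity $(\theta_t(s),1_T)(1_S,t) = (\theta_t(s),t)$, gives $\iota_S(\theta_t(s))\,p_t = p_{(\theta_t(s),t)}\,\iota_S(\theta_t(s))$. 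It is precisely here that the hypothesis enters: $1_A - \alpha_{(s,1_T)}(1_A) \in \ker\alpha_{(1_S,t)}$ is equivalent to $p_{(\theta_t(s),t)} = p_t$ (as noted in the text), so $p_t$ and $\iota_S(\theta_t(s))$ commute. From this I would read off that $\pi_S^{(t)}$ is a semigroup homomorphism into the isometries of the corner, using $\pi_S^{(t)}(s)^*\pi_S^{(t)}(s) = p_t$, and verify the covariance relation $\pi_S^{(t)}(s)\pi_A^{(t)}(a)\pi_S^{(t)}(s)^* = \pi_A^{(t)}(\alpha_{(s,1_T)}(a))$; here $p_{(\theta_t(s),t)} = p_t$ again guarantees that the resulting term $\iota_{A,S}(\alpha_{(\theta_t(s),t)}(a))$ sits inside the corner, using $(1_S,t)(s,1_T) = (\theta_t(s),t)$.

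The universal property then delivers a $\ast$-homomorphism $\tilde{\alpha}_t : A\rtimes_{\alpha|_S}S \longrightarrow p_t(A\rtimes_{\alpha|_S}S)p_t$ with $\tilde{\alpha}_t\circ\iota_{A,S} = \pi_A^{(t)}$ and $\tilde{\alpha}_t\circ\iota_S = \pi_S^{(t)}$. Evaluating on a generator $\iota_{A,S}(a)\iota_S(s)$ and simplifying $\iota_{A,S}(\alpha_{(1_S,t)}(a))\,p_t = \iota_{A,S}(\alpha_{(1_S,t)}(a))$ recovers the displayed formula, completing the first assertion. For the final claim, assuming the hypothesis for all $t$, I would check the two monoid-action axioms on generators: $\tilde{\alpha}_{1_T} = \mathrm{id}$ since $\alpha_{(1_S,1_T)} = \mathrm{id}_A$ and $\theta_{1_T} = \mathrm{id}_S$; and $\tilde{\alpha}_{t_1}\circ\tilde{\alpha}_{t_2} = \tilde{\alpha}_{t_1t_2}$ because $(1_S,t_1)(1_S,t_2) = (1_S,t_1t_2)$ forces $\alpha_{(1_S,t_1)}\circ\alpha_{(1_S,t_2)} = \alpha_{(1_S,t_1t_2)}$, while $\theta_{t_1}\circ\theta_{t_2} = \theta_{t_1t_2}$ as $\theta$ is a $T$-action.

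The main obstacle is the simultaneous role played by the single hypothesis $p_{(\theta_t(s),t)} = p_t$: it must at once make $p_t$ commute with $\iota_S(\theta_t(s))$, render $\pi_S^{(t)}(s)$ an isometry relative to the unit $p_t$ of the corner, and absorb the extra range projection so that the covariance relation stays within the corner. Once this is in place, the remaining verifications are routine bookkeeping with the $ax+b$-multiplication of $S\rtimes_\theta T$ and the functoriality of $\alpha$ and $\theta$ as monoid actions.
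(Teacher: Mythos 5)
Your proposal is correct and takes essentially the same route as the paper: both build the covariant pair $\bigl(\iota_{A,S}\circ\alpha_{(1_S,t)},\, p_t(\iota_S\circ\theta_t(\cdot))\bigr)$ targeting the corner $p_t(A\rtimes_{\alpha|_S}S)p_t$, use the hypothesis in the equivalent form $p_{(\theta_t(s),t)}=p_t$ to make $p_t$ commute with $\iota_S(\theta_t(s))$, and invoke the universal property of the semigroup crossed product. The only content of the paper's proof you omit is a converse implication (necessity of the condition when $\tilde{\alpha}_t$ is an endomorphism), which the stated lemma does not require.
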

\begin{proof}
Note that $\tilde{\alpha}_t (\iota_S(s)) = \tilde{\alpha}_t (\iota_{A,S}(1_A)\iota_S(s)) = p_t\iota_S(\theta_t(s))$ is valid for all $s \in S,t\in T$ since $\iota_{A,S}$ is unital. Suppose $t \in T$ satisfies 
\[1_A - \alpha_{(s,1_T)}(1_A) \in \ker\alpha_{(1_S,t)} \text{ for all } s \in S.\]
This is equivalent to $p_{(\theta_t(s),t)} = p_t$. Hence, $p_t$ commutes with $\iota_S(\theta_t(s))$ since
\[\iota_S(\theta_t(s))p_t = \iota_S(\theta_t(s))p_t\iota_S(\theta_t(s))^*\iota_S(\theta_t(s)) = p_{(\theta_t(s),t)}\iota_S(\theta_t(s)) = p_t\iota_S(\theta_t(s)).\]
To prove that $\tilde{\alpha}_t$ is an endomorphism of $A \rtimes_{\alpha|_S} S$, we show that 
\[\left(\iota_{A,S} \circ \alpha_{(1_S,t)},p_t(\iota_S \circ \theta_t(\cdot))\right)\] 
is a covariant pair for $(A,S,\alpha|_S)$. It is then easy to see that the induced map coming from the universal property of the crossed product is precisely $\tilde{\alpha}_t$ and maps $A \rtimes_{\alpha|_S} S$ onto the corner $p_t\left(A \rtimes_{\alpha|_S} S\right)p_t$. 

$\iota_{A,S} \circ \alpha_{(1_S,t)}$ is a unital $\ast$-homomorphism from $A$ to $p_t\left(A \rtimes_{\alpha|_S} S\right)p_t$. In addition, $p_t(\iota_S \circ \theta_t(\cdot))$ maps $S$ to the isometries in $p_t\left(A \rtimes_{\alpha|_S} S\right)p_t$ because
\[(p_t\iota_S(\theta_t(s)))^*p_t\iota_S(\theta_t(s)) = \iota_S(\theta_t(s))^*p_t\iota_S(\theta_t(s)) = \iota_S(\theta_t(s))^*\iota_S(\theta_t(s))p_t = p_t.\]
This map turns out to be a semigroup homomorphism as
\[p_t \iota_S(\theta_t(s_1))p_t\iota_S(\theta_t(s_2)) = p_t^2\iota_S(\theta_t(s_1))\iota_S(\theta_t(s_2)) = p_t\iota_S(\theta_t(s_1s_2)).\] 
Finally, for $a \in A$ and $s \in S$, we compute
\[\begin{array}{lcl}
p_t\iota_S(\theta_t(s)) \iota_{A,S}(\alpha_{(1_S,t)}(a)) (p_t\iota_S(\theta_t(s)))^* &=& p_t\iota_{A,S}(\alpha_{(\theta_t(s),t)}(a))p_t\\
&=& \iota_{A,S}(\alpha_{(1_S,t)}(\alpha_{(s,1_T)}(a)).
\end{array}\]
Thus, $\left(\iota_{A,S} \circ \alpha_{(1_S,t)},p_t(\iota_S \circ \theta_t(\cdot))\right)$ forms a covariant pair for $(A,S,\alpha|_S)$. In particular, the induced map $\tilde{\alpha}_t$ is an endomorphism of $A \rtimes_{\alpha|_S} S$. 

Conversely, assume that $\tilde{\alpha}_t$ defines an endomorphism of $A \rtimes_{\alpha|_S} S$. Then $(\tilde{\alpha}_t \circ \iota_{A,S},\tilde{\alpha}_t \circ \iota_S)$ forms a covariant pair for $(A,\alpha|_S,S)$ mapping $A$ and $S$ to the C*-algebra $B:=\tilde{\alpha}_t(A\rtimes_{\alpha|_S}S)$. Note that the unit inside this C*-algebra is $p_t$. In particular, we have a semigroup homomorphism $\tilde{\alpha}_t \circ \iota_S:S \longrightarrow \text{Isom}(B)$. This forces 
\[p_t = \tilde{\alpha}_t(\iota_S(s))^*\tilde{\alpha}_t(\iota_S(s)) = \iota_S(\theta_t(s))^*p_t\iota_S(\theta_t(s)) = p_{(\theta_t(s),t)}\]
for all $s \in S$, which is equivalent to 
\[\{1_A - \alpha_{(s,1_T)}(1_A) \mid s \in S\} \subset \ker\alpha_{(1_S,t)}.\]
\noindent Since $\alpha|_T$ and $\theta$ are semigroup homomorphisms, $\tilde{\alpha}$ defines an action of $T$ on $A \rtimes_{\alpha|_S} S$ provided that the imposed condition holds for every $t \in T$. 
\end{proof}

\begin{remark}\label{rem:cr prod dec lem - nec of cond}
It would be interesting to know whether the condition from Lemma~\ref{lem:sgp action for the it process} is actually necessary. This would be the case if $p_t \leq p_{(\theta_t(s),1_T)}$ was true for $s,t \in S$. Note that $p_{(\theta_t(s),t)} \leq p_t$ and $p_{(\theta_t(s),t)} \leq p_{(\theta_t(s),1_T)}$.
\end{remark}

\noindent Given the hypotheses of Lemma~\ref{lem:sgp action for the it process} are satisfied, $\tilde{\alpha}$ gives rise to an iterated semigroup crossed product $\left(A \rtimes_{\alpha|_S} S\right) \rtimes_{\tilde{\alpha}} T$ and it is a natural task to relate this crossed product to $A \rtimes_\alpha (S{\rtimes_\theta}T)$. The next result shows that indeed, this decomposition procedure recovers the original crossed product.

\begin{theorem}\label{thm:cr prod by sd prod as it cr prod}
Suppose $S$ and $T$ are monoids together with a $T$-action $\theta$  on $S$ by semigroup homomorphisms, and an action $\alpha$ of $S{\rtimes_\theta}T$ on a unital C*-algebra $A$ by endomorphisms. If 
\[\begin{array}{c} \{1_A - \alpha_{(s,1_T)}(1_A) \mid s \in S\} \subset \bigcap_{t \in T}\ker\alpha_{(1_S,t)} \end{array}\]
holds true, then there is a canonical isomorphism
\[\begin{array}{rcl}
A \rtimes_\alpha \left(S{\rtimes_\theta}T\right) &\stackrel{\pi}{\longrightarrow}& \left(A \rtimes_{{\alpha}|_S} S\right) \rtimes_{\tilde{\alpha}} T,\vspace*{2mm}\\
\iota_{A,S{\rtimes_\theta}T}(a) &\mapsto& \iota_{A \rtimes S} \circ \iota_{A,S}(a)\vspace*{2mm}\\
\iota_{S{\rtimes_\theta}T}(s,t) &\mapsto& (\iota_{A \rtimes S} \circ \iota_{S})(s) \iota_T(t)\vspace*{2mm}\\
\end{array}\]  
where $\tilde{\alpha}$ is given by $\tilde{\alpha}_t(\iota_{A,S}(a)\iota_S(s)) = \iota_A(\alpha_{(1_S,t)}(a))\iota_S(\theta_t(s))$.
\end{theorem}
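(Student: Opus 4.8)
The plan is to construct mutually inverse $\ast$-homomorphisms between the two C*-algebras, in each case exploiting the universal property from Definition~\ref{def:semigroup cr prod}, and then to check that the two maps invert one another on generators. Throughout I write $p_t = \iota_{A,S}(\alpha_{(1_S,t)}(1_A))$ and $p_{(s,t)} = \iota_{A,S}(\alpha_{(s,t)}(1_A))$ as before Lemma~\ref{lem:sgp action for the it process}.

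First I would build $\pi$ from the universal property of $A \rtimes_\alpha (S{\rtimes_\theta}T)$. Set $\pi_A := \iota_{A \rtimes S}\circ\iota_{A,S}$, a unital $\ast$-homomorphism, and $V_{(s,t)} := \iota_{A \rtimes S}(\iota_S(s))\iota_T(t)$. Each $V_{(s,t)}$ is an isometry because $\iota_S(s)^*\iota_S(s)=1$, $\iota_{A \rtimes S}$ is unital, and $\iota_T(t)$ is an isometry. The covariance relation $V_{(s,t)}\pi_A(a)V_{(s,t)}^* = \pi_A(\alpha_{(s,t)}(a))$ is obtained by first pushing $\iota_T(t)$ through $\iota_{A \rtimes S}(\iota_{A,S}(a))$ using $\tilde{\alpha}$-covariance together with $\tilde{\alpha}_t(\iota_{A,S}(a)) = \iota_{A,S}(\alpha_{(1_S,t)}(a))$, and then conjugating by $\iota_{A \rtimes S}(\iota_S(s))$ via $S$-covariance, using $\alpha_{(s,1_T)}\circ\alpha_{(1_S,t)} = \alpha_{(s,t)}$.

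The crux of the forward direction, which I expect to be the main obstacle, is that $V$ is a \emph{semigroup} homomorphism, i.e. $V_{(s,t)}V_{(s',t')} = V_{(s\theta_t(s'),tt')}$. Using $\iota_T(t)\iota_{A \rtimes S}(\iota_S(s')) = \iota_{A \rtimes S}(\tilde{\alpha}_t(\iota_S(s')))\iota_T(t)$ with $\tilde{\alpha}_t(\iota_S(s')) = p_t\iota_S(\theta_t(s'))$, together with $\iota_S(s)p_t = p_{(s,t)}\iota_S(s)$, one arrives at $V_{(s,t)}V_{(s',t')} = \iota_{A \rtimes S}(p_{(s,t)}\iota_S(u))\iota_T(tt')$ where $u := s\theta_t(s')$. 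The naive hope $p_{(s,t)}\iota_S(u) = \iota_S(u)$ is \emph{false} inside $A \rtimes_{\alpha|_S}S$, since the defect projection $1 - p_{(s,t)}$ does not annihilate $\iota_S(u)$ there; the defect only disappears after multiplying by $\iota_T(tt')$. Concretely, the difference $V_{(s,t)}V_{(s',t')} - V_{(u,tt')}$ equals $\iota_{A \rtimes S}(p_{(s,t)}-1)\,V_{(u,tt')}$, and since $V_{(u,tt')}$ has range projection $\iota_{A \rtimes S}(p_{(u,tt')})$, the factorisation $\alpha_{(u,tt')} = \alpha_{(s,t)}\circ\alpha_{(s',t')}$ yields $p_{(u,tt')}\le p_{(s,t)}$, whence $(p_{(s,t)}-1)p_{(u,tt')} = 0$ and the difference vanishes. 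Thus $(\pi_A,V)$ is a covariant pair and induces $\pi$.

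For the inverse I would run the universal properties in the opposite order. The pair $(\iota_{A,S{\rtimes_\theta}T},\ s\mapsto \iota_{S{\rtimes_\theta}T}(s,1_T))$ is covariant for $(A,S,\alpha|_S)$ and yields $\rho: A\rtimes_{\alpha|_S}S \to A\rtimes_\alpha(S{\rtimes_\theta}T)$. Next I would verify that $(\rho,\ t\mapsto \iota_{S{\rtimes_\theta}T}(1_S,t))$ is covariant for $(A\rtimes_{\alpha|_S}S, T, \tilde{\alpha})$; because $\iota_{S{\rtimes_\theta}T}(1_S,t)$ is an isometry, both $x\mapsto \iota_{S{\rtimes_\theta}T}(1_S,t)\rho(x)\iota_{S{\rtimes_\theta}T}(1_S,t)^*$ and $\rho\circ\tilde{\alpha}_t$ are $\ast$-homomorphisms, so it suffices to check the identity on the generators $\iota_{A,S}(a)$ and $\iota_S(s)$. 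The generator $\iota_S(s)$ is precisely where the standing hypothesis re-enters: the check reduces to showing that $\iota_{S{\rtimes_\theta}T}(\theta_t(s),1_T)$ commutes with $\iota_{A,S{\rtimes_\theta}T}(\alpha_{(1_S,t)}(1_A))$, which holds because $\alpha_{(\theta_t(s),1_T)}(\alpha_{(1_S,t)}(1_A)) = \alpha_{(\theta_t(s),t)}(1_A) = \alpha_{(1_S,t)}(1_A)$ by the assumption $\{1_A - \alpha_{(s,1_T)}(1_A)\mid s\in S\}\subset\bigcap_{t\in T}\ker\alpha_{(1_S,t)}$, exactly the commutation exploited in Lemma~\ref{lem:sgp action for the it process}. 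This produces $\psi:(A\rtimes_{\alpha|_S}S)\rtimes_{\tilde{\alpha}}T \to A\rtimes_\alpha(S{\rtimes_\theta}T)$. Finally I would evaluate $\pi\circ\psi$ and $\psi\circ\pi$ on generators, using $\iota_{S{\rtimes_\theta}T}(s,1_T)\iota_{S{\rtimes_\theta}T}(1_S,t) = \iota_{S{\rtimes_\theta}T}(s,t)$ and the normalisations $\iota_T(1_T)=1$, $\iota_S(1_S)=1$, to conclude that both composites are the identity; as the generators span dense subalgebras, $\pi$ is the asserted isomorphism.
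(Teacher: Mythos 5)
Your proposal is correct and follows essentially the same route as the paper's proof: both directions are obtained by exhibiting covariant pairs and invoking the two universal properties, with the standing hypothesis entering exactly where you place it (the well-definedness of $\tilde{\alpha}$ via Lemma~\ref{lem:sgp action for the it process}, and the commutation of $\iota_{S{\rtimes_\theta}T}(\theta_t(s),1_T)$ with $\iota_{A,S{\rtimes_\theta}T}(\alpha_{(1_S,t)}(1_A))$ in the reverse-direction covariance check), and the two induced maps are then seen to be mutually inverse on generators. The only cosmetic difference is in the multiplicativity check for $V$: the paper absorbs the defect projection $p_t$ into $\iota_T(t)$ using $\iota_{A \rtimes S}(p_t)=\iota_T(t)\iota_T(t)^*$ together with the commutation $\iota_S(\theta_t(s))p_t = p_t\iota_S(\theta_t(s))$, whereas you absorb $p_{(s,t)}$ into the range projection $\iota_{A \rtimes S}(p_{(s\theta_t(s'),tt')})$ of $V_{(s\theta_t(s'),tt')}$ via $p_{(s\theta_t(s'),tt')}\leq p_{(s,t)}$ --- the two computations are equivalent.
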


\begin{proof}
Recall that $(\iota_{A,S{\rtimes_\theta}T},\iota_{S{\rtimes_\theta}T}), (\iota_{A,S},\iota_S)$ and $(\iota_{A \rtimes S},\iota_T)$ denote the universal covariant pairs for $(A,S{\rtimes_\theta}T,\alpha)$, $(A,S,\alpha|_S)$ and $(A \rtimes_{\alpha|_S} S,T,\tilde{\alpha})$, respectively. The strategy is governed by the following claims: 
\begin{enumerate}[1)]
\item $(\iota_{A \rtimes S} \circ \iota_{A,S}, (\iota_{A \rtimes S} \circ \iota_{S}) \times \iota_T)$ forms a covariant pair for $(A,S{\rtimes_\theta}T,\alpha)$.
\item $(\iota_{A,S{\rtimes_\theta}T} \times \iota_{S{\rtimes_\theta}T}|_S,\iota_{S{\rtimes_\theta}T}|_T)$ forms a covariant pair for $(A \rtimes_{\alpha|_S} S,T,\tilde{\alpha})$. 
\end{enumerate}
If we assume 1) and 2), then 1) and the universal property of $A \rtimes_\alpha (S{\rtimes_\theta}T)$ give a $\ast$-homomorphism 
\[\begin{array}{rcl}
A \rtimes_\alpha (S{\rtimes_\theta}T) &\stackrel{\pi}{\onto}& (A \rtimes_{{\alpha}|_S} S) \rtimes_{\tilde{\alpha}} T\\
\iota_{A,S{\rtimes_\theta}T}(a) &\mapsto& \iota_{A \rtimes S} \circ \iota_{A,S}(a)\\
\iota_{S{\rtimes_\theta}T}(s,t) &\mapsto& (\iota_{A \rtimes S} \circ \iota_{S})(s) \iota_T(t)
\end{array}\]
Since $S$ and $T$ both have an identity, the induced map equals $\pi$. Note that the pair from 2) is the natural candidate to provide an inverse for $\pi$. Indeed, if 2) is valid, then the two induced $\ast$-homomorphisms are mutually inverse on the standard generators of the C*-algebras on both sides. Thus it remains to establish 1) and 2).

For step 1), note that $\iota_{A \rtimes S} \circ \iota_{A,S}$ is a unital $\ast$-homomorphism and $\iota_{A \rtimes S} \circ \iota_{S}$ defines a semigroup homomorphism from $S$ to the isometries in $(A \rtimes_{{\alpha}|_S} S) \rtimes_{\tilde{\alpha}} T$. The covariance condition for $(T,\tilde{\alpha})$ yields 
\[\iota_T(t)\iota_{A \rtimes S} \circ \iota_{S}(s) = \tilde{\alpha}(\iota_{A \rtimes S} \circ \iota_{S}(s))\iota_T(t) = \iota_{A \rtimes S} \circ \iota_{S}(\theta_t(s))\iota_T(t).\]   
Therefore, $(\iota_{A \rtimes S} \circ \iota_{S}) \times \iota_T$ is well-behaved with respect to the semidirect product structure on $S \times T$ coming from $\theta$, so we get a semigroup homomorphism $(\iota_{A \rtimes S} \circ \iota_{S}) \times \iota_T: S{\rtimes_\theta}T \longrightarrow \text{Isom}((A \rtimes_{{\alpha}|_S} S) \rtimes_{\tilde{\alpha}} T)$. Now let $a \in A, s \in S$ and $t \in T$. Then we compute
\[\begin{array}{l}
((\iota_{A \rtimes S} \circ \iota_{S}) \times \iota_T)(s,t) \iota_{A \rtimes S} \circ \iota_{A,S}(a) ((\iota_{A \rtimes S} \circ \iota_{S}) \times \iota_T)(s,t)^*\vspace*{2mm}\\ 
\hspace*{8mm}= \iota_{A \rtimes S} \circ \iota_{S}(s)\iota_T(t) \iota_{A \rtimes S} \circ \iota_{A,S}(a) \iota_T(t)^*\iota_{A \rtimes S} \circ \iota_{S}(s)^*\vspace*{2mm}\\
\hspace*{8mm}= \iota_{A \rtimes S} \circ \iota_{S}(s) \iota_{A \rtimes S} \circ \iota_{A,S}(\alpha_{(1_S,t)}(a)) \iota_{A \rtimes S} \circ \iota_{S}(s)^*\vspace*{2mm}\\
\hspace*{8mm}= \iota_{A \rtimes S} \circ \iota_{A,S}(\alpha_{(s,1_T)(1_S,t)}(a))\vspace*{2mm}\\
\hspace*{8mm}= \iota_{A \rtimes S} \circ \iota_{A,S}(\alpha_{(s,t)}(a)),
\end{array}\]
which completes 1). For part 2), we remark that $(\iota_{A,S{\rtimes_\theta}T},\iota_{S{\rtimes_\theta}T}|_S)$ is a covariant pair for $(A,S,\alpha|_S)$. Since $\iota_{A,S{\rtimes_\theta}T}$ and $\iota_{A,S}$ are unital, the induced map is unital as well. Moreover, $\iota_{S{\rtimes_\theta}T}|_T$ is a semigroup homomorphism mapping $T$ to the isometries in $A \rtimes_\alpha (S{\rtimes_\theta}T)$. Thus, we are left with the covariance condition. Note that it suffices to check the covariance condition on the standard generators of $A \rtimes_{\alpha|_S} S$. For $a \in A, s \in S$ and $t \in T$, we get
\[\begin{array}{l}
\hspace*{-1mm}\iota_{S{\rtimes_\theta}T}(1_S,t) \iota_{A,S\rtimes_\theta T}(a)\iota_{S{\rtimes_\theta}T}(s,1_T) \iota_{S{\rtimes_\theta}T}(1_S,t)^* \vspace*{2mm}\\
=\iota_{S{\rtimes_\theta}T}(1_S,t) \iota_{A,S\rtimes_\theta T}(a)\iota_{S{\rtimes_\theta}T}(1_S,t)^*\iota_{S{\rtimes_\theta}T}(1_S,t)\iota_{S{\rtimes_\theta}T}(s,1_T) \iota_{S{\rtimes_\theta}T}(1_S,t)^* \vspace*{2mm}\\ 
= \iota_{A,S\rtimes_\theta T}(\alpha_{(1_S,t)}(a))\iota_{S{\rtimes_\theta}T}(\theta_t(s),1_T)p_t \vspace*{2mm}\\
= \iota_{A,S\rtimes_\theta T}(\alpha_{(1_S,t)}(a))\iota_{S{\rtimes_\theta}T}(\theta_t(s),1_T)\vspace*{2mm}\\
=\tilde{\alpha}_t(\iota_{A,S\rtimes_\theta T}(a)\iota_{S{\rtimes_\theta}T}(s,1_T)).
\end{array}\]
Hence 1) and 2) are both valid, so the proof is complete.
\end{proof}


\begin{remark}\label{rem:ext to proper endomorphisms for non-unital coeff algs}~
\begin{enumerate}[a)]
\item It is conceivable that Theorem~\ref{thm:cr prod by sd prod as it cr prod} extends to the setting where $A$ need not be unital, representations are non-degenerate and $\alpha$ is extendible, see \cite{Lar} for more information on these conditions.
\item The condition $p_{(\theta_t(s),t)} = p_t$ for all $s \in S$ and $t \in T$ is satisfied if $\alpha|_S$ is unital because $\alpha_{(\theta_t(s),t)}(1_A) = \alpha_{(1_S,t)}(\alpha_{(s,1_T)}(1_A)) = \alpha_{(1_S,t)}(1_A)$. In fact, if $\alpha|_T$ consists of injective endomorphisms, then $p_{(\theta_t(s),t)} = p_t$ holds if and only if $\alpha|_S$ is unital. In particular, $p_{(\theta_t(s),t)} = p_t$ is satisfied whenever $S$ is a group.
\end{enumerate}
\end{remark}

\section*{References}
\begin{biblist}
\bibselect{bib}
\end{biblist}

\end{document}